\documentclass{amsart}

\usepackage{amsmath}
\usepackage{amssymb}
\usepackage{amsfonts}
\usepackage{amsthm}
\usepackage{amsopn}
\usepackage{enumerate}
\usepackage[all]{xy}
\usepackage{array}
\usepackage{mathrsfs}
\usepackage{tikz}
\usepackage{tikz-cd}
\usepackage{dsfont}
\usepackage{caption}
\usepackage{hyperref}
\usepackage{comment}
\usepackage[normalem]{ulem} 
\hypersetup{
 colorlinks  = true,  
 urlcolor   = blue,  
 linkcolor  = blue,  
 citecolor  = black   
}
 \definecolor{darkgreen}{RGB}{55,138,0}
 
\mathchardef\mhyphen="2D
\newcommand{\m}{\alpha}
\newcommand{\Z}{\operatorname{Zer}(f)}


\newcommand{\kk}{\mathds{k}}
\newcommand{\NN}{\ensuremath{\mathbb{N}}}

\newcommand{\ZZ}{\ensuremath{\mathbb{Z}}}


\newcommand{\mcS}{\ensuremath{\mathcal{S}}}
\newcommand{\mcP}{\ensuremath{\mathcal{P}}}

\newcommand{\pp}{\mathfrak{p}}


\newcommand{\sfR}{\ensuremath{\mathsf{R}}}
\newcommand{\sfD}{\ensuremath{\mathsf{D}}}

\newcommand{\ulI}{\ensuremath{\mathbf{I}}}
\newcommand{\ulJ}{\ensuremath{\mathbf{J}}}
\newcommand{\ule}{\ensuremath{\mathbf{e}}}
\newcommand{\ulK}{\ensuremath{\mathbf{K}}}


\newcommand{\ra}{\ensuremath{\longrightarrow}}
\newcommand{\sra}{\ensuremath{\rightarrow}}

\newcommand{\Ext}{\operatorname{Ext}}

\newcommand{\Hom}{\operatorname{Hom}}
\newcommand{\uHom}{\operatorname{\underline{Hom}}}


\newcommand{\gldim}{\operatorname{gl dim}}


\newcommand{\st}{\ensuremath{\mathrel{}\middle|\mathrel{}}}
\renewcommand{\th}{\mathrm{th}}
\newcommand\inv{^{-1}}
\newcommand{\Id}{\operatorname{Id}}
\newcommand{\im}{\operatorname{im}}

\def\lcm{\mathop{\operatorname{lcm}}}

\newcommand{\cO}{\mathcal{O}}
\newcommand{\ma}{\ensuremath{^{-}_{\m }}}
\newcommand{\pa}{\ensuremath{^{+}_{\m }}}
\newcommand{\pma}{\ensuremath{^{\pm}_{\m }}}
\newcommand{\mpa}{\ensuremath{^{\mp}_{\m }}}
\newcommand{\maj}{\ensuremath{^{-j}_{\m }}}
\newcommand{\paj}{\ensuremath{^{+j}_{\m }}}
\newcommand{\mai}{\ensuremath{^{-i}_{\m }}}
\newcommand{\pai}{\ensuremath{^{+i}_{\m }}}
\newcommand{\pmaj}{\ensuremath{^{\pm j}_{\m }}}
\newcommand{\mcMma}{\ensuremath{M^{-n_{\m }}_{\m }}}
\newcommand{\mcMpa}{\ensuremath{M^{+n_{\m }}_{\m }}}

\newcommand{\gr}{\operatorname{grmod}\mhyphen}
\newcommand{\Gr}{\operatorname{GrMod}\mhyphen}
\newcommand{\qgr}{\operatorname{qgrmod}\mhyphen}
\newcommand{\QGr}{\operatorname{QGrMod}\mhyphen}

\newcommand{\QgrA}{\mathcal{Q}_{\mathrm{gr}}(A)}

\newcommand{\fin}{\mathrm{fin}}

\newcommand{\Pic}{\operatorname{Pic}}

\newcommand{\s}[1]{\ensuremath{\langle #1 \rangle}}
\DeclareMathOperator{\Spec}{Spec}
\DeclareMathOperator{\MaxSpec}{MaxSpec}
\DeclareMathOperator{\Proj}{Proj}
\DeclareMathOperator{\Supp}{Supp}

\newcommand{\tors}{\ensuremath{\operatorname{tors}}\mhyphen}
\newcommand{\Tors}{\ensuremath{\operatorname{Tors}}\mhyphen}
\newcommand{\fdim}{\ensuremath{\operatorname{fdim}}\mhyphen}

\newcommand{\coh}{\operatorname{coh}}
\newcommand{\Qcoh}{\operatorname{Qcoh}}
\newcommand{\ann}{\operatorname{ann}}
\newcommand{\Usl}{U(\mathfrak{sl}_2)}

\newcommand{\Mod}{\operatorname{Mod}\mhyphen}

\newcommand{\grp}[1]{\left\langle #1 \right\rangle}

\newcommand{\red}[1]{\textcolor{red}{#1}}

\numberwithin{equation}{section}

\newtheorem{theorem}[equation]{Theorem}
\newtheorem*{theorem*}{Theorem}
\newtheorem{corollary}[equation]{Corollary}
\newtheorem{lemma}[equation]{Lemma}
\newtheorem{proposition}[equation]{Proposition}

\newtheorem{hypothesis}[equation]{Hypothesis}

\theoremstyle{remark}
\newtheorem{remark}[equation]{Remark}

\theoremstyle{definition}
\newtheorem{definition}[equation]{Definition}

\newtheorem{example}[equation]{Example}

\newtheorem*{acknowledgments}{Acknowledgments}


\newcommand\fsl{\mathfrak{sl}}
\newcommand\iso{\cong}
\newcommand\tensor{\otimes}
\newcommand\tornado{\xi}

\newcommand\cQ{\mathcal Q}

\newcommand\scC{\mathscr{C}}
\newcommand\scP{\mathscr{P}}

\usepackage{xy}

\begin{document}
\title{Simple $\mathbb{Z}$-graded domains of Gelfand--Kirillov dimension two}

\author[Ferraro]{Luigi Ferraro}
\address{Wake Forest University, Department of Mathematics and Statistics, P.O. Box 7388, Winston-Salem, North Carolina 27109} 
\email{ferrarl@wfu.edu}

\author[Gaddis]{Jason Gaddis}
\address{Miami University, Department of Mathematics, 301 S. Patterson Ave., Oxford, Ohio 45056} 
\email{gaddisj@miamioh.edu}

\author[Won]{Robert Won}
\address{University of Washington, Department of Mathematics, Box 354350, Seattle, Washington 98195} 
\email{robwon@uw.edu}

\date{}
\subjclass[2010]{16W50, 16D90, 16S38, 14A22, 14H99, 13A02}
\keywords{}

\begin{abstract}
Let $\mathds{k}$ be an algebraically closed field and $A$ a $\mathbb{Z}$-graded finitely generated simple $\mathds{k}$-algebra which is a domain of Gelfand--Kirillov dimension 2. We show that the category of $\mathbb{Z}$-graded right $A$-modules is equivalent to the category of quasicoherent sheaves on a certain quotient stack.
The theory of these simple algebras is closely related to that of a class of generalized Weyl algebras (GWAs).
We prove a translation principle for the noncommutative schemes of these GWAs, shedding new light on the classical translation principle for the infinite-dimensional primitive quotients of $U(\mathfrak{sl}_2)$.
\end{abstract}

\maketitle

\section{Introduction}

Let $\kk$ denote an algebraically closed field of characteristic zero. Throughout this paper, all vector spaces are taken over $\kk$, all rings are $\kk$-algebras, and all categories and equivalences of categories are $\kk$-linear.

Suppose $A = \bigoplus_{i \in \NN} A_i$ is a right noetherian $\NN$-graded $\kk$-algebra. There is a notion of a noncommutative projective scheme of $A$, defined by Artin and Zhang in \cite{AZ}. Let $\Gr A$ denote the category of $\NN$-graded right $A$-modules and let $\gr A$ denote its full subcategory of finitely generated modules. 
Let $\Tors A$ ($\tors A$, respectively) denote the full subcategory of $\Gr A$ ($\gr A$, respectively) consisting of torsion modules (see Section \ref{sec:tors}). 

Let $\QGr A$ denote the quotient category $\Gr A / \Tors A$, let $\mathcal{A}$ denote the image of $A_A$ in $\QGr A$, and let $\mcS$ denote the shift operator on $\QGr A$.
The \emph{general noncommutative projective scheme of $A$} is the triple $(\QGr A, \mathcal{A}, \mcS)$.
Similarly, if we let $\qgr A = \gr A / \tors A$, then the \emph{noetherian noncommutative projective scheme of $A$} is the triple $(\qgr A, \mathcal{A}, \mcS)$.
These two categories play the same role as the categories of quasicoherent and coherent sheaves on $\Proj R$ for a commutative $\kk$-algebra $R$. By abuse of language, we will refer to either of the categories $\QGr A$ or $\qgr A$ as the \emph{noncommutative projective scheme of $A$}.

If $A_0 = \kk$, then $A$ is called \emph{connected graded}. In \cite{AS}, Artin and Stafford classified the noncommutative projective schemes of connected graded domains of Gelfand--Kirillov (GK) dimension 2. Since a connected graded domain of GK dimension 2 is a generalization of a projective curve, Artin and Stafford's theorem can be viewed as a classification of noncommutative projective curves.

\begin{theorem*}[Artin and Stafford, {\cite[Corollary 0.3]{AS}}] Let $A$ be a connected $\NN$-graded domain of GK dimension 2 which is finitely generated in degree $1$. Then there exists a projective curve $X$ such that $\qgr A \equiv \coh(X)$.
\end{theorem*}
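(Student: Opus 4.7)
The plan is to realize $A$, up to a finite-dimensional discrepancy, as a \emph{twisted homogeneous coordinate ring} $B(X,\mathcal{L},\sigma)$ of a projective curve, and then invoke the sheaf-theoretic machinery of Artin and Van den Bergh.

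First I would analyze the graded quotient ring $Q_{\mathrm{gr}}(A)$ obtained by inverting all nonzero homogeneous elements of $A$. A structural theorem in this setting asserts that $Q_{\mathrm{gr}}(A) \cong K[z,z^{-1};\sigma]$, where $K = Q_{\mathrm{gr}}(A)_0$ is a field finitely generated over $\kk$ of transcendence degree $1$ and $\sigma$ is a $\kk$-algebra automorphism of $K$. Each component $A_n$ then embeds into $Kz^n \cong K$ as a finite-dimensional $\kk$-subspace $V_n$, and the multiplication in $A$ becomes the twisted multiplication $V_m \cdot \sigma^m(V_n) \subset V_{m+n}$.

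Next I would pass to a projective curve $X$ with function field $K$, on which $\sigma$ acts, and show that in sufficiently high degree each $V_n$ coincides with $H^0(X,\mathcal{L}_n)$, where $\mathcal{L}_n = \mathcal{L} \otimes \sigma^{*}\mathcal{L} \otimes \cdots \otimes (\sigma^{n-1})^{*}\mathcal{L}$ for a single invertible sheaf $\mathcal{L}$ on $X$. This identifies the high-degree part of $A$ with the twisted homogeneous coordinate ring $B(X,\mathcal{L},\sigma) = \bigoplus_{n} H^0(X,\mathcal{L}_n)$. Because truncation preserves $\qgr$, and because Artin and Van den Bergh proved that $\qgr B(X,\mathcal{L},\sigma) \equiv \coh(X)$ whenever $\mathcal{L}$ is $\sigma$-ample, the desired equivalence follows.

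The main obstacle is this middle step: upgrading the abstract linear data $V_n \subset K$ to global sections of an ample line bundle on a specific projective curve. This requires exploiting the finite generation of $A$ in degree $1$ (to secure $\sigma$-ampleness of $\mathcal{L}$), controlling the linear growth $\dim_{\kk} A_n = O(n)$ forced by $\GKdim A = 2$, and, most delicately, singling out the correct projective model $X$ of the function field $K$, which need not be the smooth model but is determined by the ``pole and zero'' structure of elements of $A$ inside $Q_{\mathrm{gr}}(A)$. Verifying compatibility of the multiplicative structure of $A$ with the divisor relation $\mathcal{L}_{m+n} \cong \mathcal{L}_m \otimes (\sigma^m)^{*}\mathcal{L}_n$ in high degree is where all the geometric input is concentrated.
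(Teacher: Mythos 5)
This theorem is quoted from Artin and Stafford's paper~\cite{AS} and is not proved in the present paper, so there is no internal proof to compare against; I will assess your sketch on its own terms. The overall architecture you describe---passing to $Q_{\mathrm{gr}}(A)\cong K[z,z^{-1};\sigma]$ with $K$ a function field of transcendence degree one, choosing a projective model $X$ of $K$ on which $\sigma$ acts (not necessarily the smooth one), relating $A$ to a twisted homogeneous coordinate ring $B(X,\mathcal{L},\sigma)$, and invoking the Artin--Van den Bergh equivalence $\qgr B(X,\mathcal{L},\sigma)\equiv\coh(X)$ for $\sigma$-ample $\mathcal{L}$---is indeed the framework of their proof, and you correctly identify that the geometric input is concentrated in converting the linear data $V_n\subset K$ into global sections on the right model.

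The gap is the middle step. You assert that in sufficiently high degree $A_n$ coincides with $H^0(X,\mathcal{L}_n)$, i.e.\ that $A$ is eventually a twisted homogeneous coordinate ring, and you treat the rest as bookkeeping because truncation does not change $\qgr$. This is only one branch of Artin and Stafford's classification. Their Theorem 0.2 establishes a dichotomy: either $A$ agrees with a twisted homogeneous coordinate ring in large degree, \emph{or} $A$ is a geometric idealizer---a proper subalgebra of $B(X,\mathcal{L},\sigma)$ consisting of sections satisfying vanishing conditions along a $\sigma$-orbit of points---and the idealizer is not obtained from the overring by discarding a finite-dimensional piece. In that branch one cannot simply truncate and cite Artin--Van den Bergh; one needs a separate comparison of quotient categories between the idealizer and its overring, and the curve $X$ realizing the equivalence must be chosen with care (it can be singular even when the smooth model is available). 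Handling this second branch, and proving that a domain of quadratic growth generated in degree one must fall into one of the two cases, is where nearly all the difficulty of the Artin--Stafford paper lives, so a plan that routes entirely through the twisted-ring case does not amount to a proof.
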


The focus of this paper is on $\ZZ$-graded $\kk$-algebras. A fundamental example of such a ring is the (first) Weyl algebra, $A_1 = \kk\langle x, y \rangle/(yx-xy-1)$, which does not admit an $\NN$-grading but does admit a $\ZZ$-grading by letting $\deg x = 1$ and $\deg y = -1$. This $\ZZ$-grading is natural in light of the fact that $A_1$ is isomorphic to the ring of differential operators on the polynomial ring $\kk[t]$, where $x$ corresponds to multiplication by $t$ and $y$ corresponds to differentiation by $t$. In \cite{smith}, Smith showed that there exists an abelian group $\Gamma$ and a commutative $\Gamma$-graded ring $C$ such that the category of $\Gamma$-graded $C$-modules is equivalent to $\Gr A_1$. As a corollary, there exists a quotient stack $\chi$ such that $\Gr A_1$ is equivalent to the category $\Qcoh(\chi)$ of quasicoherent sheaves on $\chi$ \cite[Corollary 5.15]{smith}.

As the Weyl algebra is a domain of GK dimension 2, Smith's result can be seen as evidence for a $\ZZ$-graded version of Artin and Stafford's classification.
Further evidence is found in \cite{woncomm}, in which the third-named author studied the graded module categories over the infinite-dimensional primitive quotients $\{R_{\lambda} \mid \lambda \in \kk\}$ of $U(\mathfrak{sl}_2)$. Each $R_{\lambda}$ is a $\ZZ$-graded domain of GK dimension 2, and for each $R_{\lambda}$, there exists an abelian group $\Gamma$ and a commutative $\Gamma$-graded ring $B_{\lambda}$ such that $\QGr R_{\lambda}$ is equivalent to the category of $\Gamma$-graded $B_{\lambda}$-modules \cite[Theorems 4.2 and 4.10, Corollary 4.20]{woncomm}.

In this paper, we prove a $\ZZ$-graded analogue of Artin and Stafford's theorem for \emph{simple} domains of GK dimension 2. 
A key ingredient in our result is Bell and Roglaski's classification of simple $\ZZ$-graded domains. In \cite[Theorem 5.8]{BR}, they
proved that for any simple finitely generated $\ZZ$-graded domain $A$ of GK dimension 2, there is a generalized Weyl algebra (GWA) $A'$ satisfying Hypothesis~\ref{hyp} such that $\Gr A \equiv \Gr A'$. We are therefore able to reduce to studying these GWAs and prove the following theorem.

\begin{theorem*}[Theorem~{\ref{Bequivalent}} and Corollary~{\ref{cor.stacks}}]
Let $A = \bigoplus_{i \in \ZZ} A_i$ be a simple finitely generated $\ZZ$-graded domain of GK dimension 2 with $A_i \neq 0$ for all $i \in \ZZ$. Then there exists an abelian group $\Gamma$ and a commutative $\Gamma$-graded ring $B$ such that $\Gr A$ is equivalent to the category of $\Gamma$-graded $B$-modules. Hence, if $\chi$ is the quotient stack $\left[ \frac{\Spec B}{\Spec \kk \Gamma} \right]$ then $\QGr A \equiv \Qcoh (\chi)$. 
\end{theorem*}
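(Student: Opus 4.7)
The plan is to reduce the problem to a specific family of generalized Weyl algebras (GWAs) and then to construct $B$ by refining the $\ZZ$-grading. By Bell--Rogalski \cite[Theorem 5.8]{BR}, for the simple domain $A$ there is a GWA $A' = A(f,\sigma)$ satisfying Hypothesis~\ref{hyp} with $\Gr A \equiv \Gr A'$, so it suffices to construct the commutative $\Gamma$-graded ring $B$ together with an equivalence between $\Gr A'$ and the category of $\Gamma$-graded $B$-modules. The overall strategy parallels Smith's treatment of the first Weyl algebra \cite{smith} and Won's treatment of the primitive quotients of $\Usl$ \cite{woncomm}.

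Next I would identify the grading group $\Gamma$. The $\sigma$-orbits of the zero set $\Z$ encode ``extra invertible elements'' appearing in a large localization of $A'$: each finite $\sigma$-orbit $\cO$ has some length $n_{\cO}$ and along each orbit $f$ has a prescribed multiplicity pattern, and these orbit lengths and multiplicities dictate how finely the $\ZZ$-grading of $A'$ should be refined. A natural candidate is
\[
\Gamma := \ZZ \oplus \bigoplus_{\cO} \ZZ/n_{\cO}\ZZ,
\]
with the sum running over the finite $\sigma$-orbits meeting $\Z$; the projection $\Gamma \twoheadrightarrow \ZZ$ onto the first factor then recovers the original $\ZZ$-grading on $A'$.

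Then I would build $B$ as a commutative $\Gamma$-graded ring. Starting from a localization of $\kk[z,z\inv]$ in which the $\sigma$-orbits of $\Z$ are nicely controlled, I would adjoin, for each finite orbit $\cO$, a formal $n_{\cO}$-th root $t_{\cO}$ of an explicit element of the localization whose zero locus is exactly $\cO$. Placing $t_{\cO}$ in an appropriate component of $\Gamma$ and leaving $z$ in degree $0$, one obtains a commutative $\Gamma$-graded ring $B$. The equivalence between $\Gr A'$ and the category of $\Gamma$-graded $B$-modules is then built via explicit inverse functors: from a $\Gamma$-graded $B$-module one produces an $A'$-module by interpreting the GWA generators $x,y \in A'$ as multiplication by certain homogeneous elements of $B$, and from an $A'$-module one refines the $\ZZ$-grading to a $\Gamma$-grading using the action of these ``root'' elements on eigenspaces.

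The main obstacle will be to arrange the grading on $B$ and the homogeneous elements playing the role of $x$ and $y$ so that $B$ is commutative while the resulting functors are mutually inverse equivalences; the noncommutativity of $A'$ must be absorbed entirely by the fineness of $\Gamma$. This requires a careful case analysis of the orbit types permitted by Hypothesis~\ref{hyp} (infinite orbits and finite orbits meeting $\Z$ with various multiplicity patterns of $f$). Once the equivalence is in hand, the statement about the stack $\chi = [\Spec B / \Spec \kk\Gamma]$ is immediate, since for any commutative $\Gamma$-graded ring the category of $\Gamma$-graded modules is equivalent to $\Qcoh$ on the quotient stack for the natural action of the diagonalizable group $\Spec\kk\Gamma$ on $\Spec B$, exactly as in \cite[Corollary 5.15]{smith}.
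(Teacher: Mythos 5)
The proposal begins correctly with the reduction to a GWA $A'=R(\sigma,f)$ satisfying Hypothesis~\ref{hyp} via Bell--Rogalski, and it correctly anticipates that the result should follow the template of Smith \cite{smith} and Won \cite{woncomm}. But the proposed grading group $\Gamma := \ZZ \oplus \bigoplus_{\cO}\ZZ/n_{\cO}\ZZ$, indexed over finite $\sigma$-orbits, cannot be right: under Hypothesis~\ref{hypa}, $\sigma$ is either translation by $1$ on $\kk[z]$ or multiplication by a non-root-of-unity $\xi$ on $\kk[z,z^{-1}]$, so every $\sigma$-orbit that meets $\Z$ is \emph{infinite}. Your proposed torsion summand is therefore empty, leaving $\Gamma = \ZZ$ and $B = A'$ itself, which is noncommutative. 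More fundamentally, the picture of $\Gamma$ as a ``refinement'' of the $\ZZ$-grading (with $\Gamma \twoheadrightarrow \ZZ$ recovering the original grading) is not what happens here: the correct $\Gamma$ is $\bigoplus_{i=1}^r\ZZ_\fin \cong \ZZ_\fin$, the group of finite subsets of $\ZZ$ under symmetric difference, which is an infinite-rank $\ZZ/2\ZZ$-vector space with no nontrivial map to $\ZZ$ at all.

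The missing idea is the family of involutive autoequivalences $\iota^{\alpha}_j \in \Pic(\gr A')$, one for each distinct root $\alpha$ of $f$ and each $j\in\ZZ$, which swap the simple modules $M^{\pm}_{\alpha}\s{j}$ and fix all other simples (Proposition~\ref{prop.iotas}). These commute, generate $\Gamma$ inside the Picard group (Lemma~\ref{lem.pic}), and the objects $\{\iota_{\ulJ}A'\mid\ulJ\in\Gamma\}$ generate $\Gr A'$ (Lemma~\ref{lem.iotasgen}). This permits one to form the $\Gamma$-graded ring $B = \bigoplus_{\ulJ\in\Gamma}\Hom_{\gr A'}(A',\iota_{\ulJ}A')$ and apply the general equivalence principle of \cite[Proposition~3.6]{woncomm} to get $\Gr A' \equiv \Gr(B,\Gamma)$. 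Your instinct about ``adjoining roots'' to a localization is in the right spirit — the explicit presentation (Theorem~\ref{Bpresentation}) shows $B$ is $R$ with a square root $b_{i,j}$ of $\sigma^{-j}((z-\alpha_i)^{n_i})$ adjoined for each root $\alpha_i$ and each $j\in\ZZ$ — but the indexing (one generator per root per integer, not per orbit) and the grading group can only be discovered through the Picard-group mechanism, not by inspection of a localization. The final step passing to the quotient stack is correct and is indeed routine once the equivalence is in hand.
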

We also study properties of the commutative rings $B$ that arise in the above theorem.
For instance, we prove that $B$ is non-noetherian of Krull dimension 1,
and that $B$ is coherent Gorenstein (see Section~\ref{sec.comm}).

In Section~\ref{sec.morita}, we prove a translation principle for GWAs. This builds upon the classical translation principle for $\Usl$, which we describe as follows. 
Over $\kk$, the associative algebra $\Usl$ is generated by $E,F,H$ subject to the relations $[E,F] = H$, $[H,E] = 2E$, and $[H,F] = -2F$.
The Casimir element $\Omega = 4FE + H^2 + 2H$ generates the center of $\Usl$.
The infinite-dimensional primitive factors of $U=\Usl$ are given by $R_{\lambda} = U/(\Omega - \lambda^2 + 1) U$ for each $\lambda \in \kk$.
The classical result states that $R_{\lambda}$ is Morita equivalent to $R_{\lambda + 1}$ unless $\lambda = -1,0$. We refer the reader to \cite{St2} for a proof.

The rings $R_{\lambda}$ are isomorphic to GWAs with base ring $\kk[z]$, defining automorphism $\sigma(z) = z+1$, and quadratic defining polynomial $f \in \kk[z]$. We study a class of GWAs which includes all of the $R_{\lambda}$ (see Hypothesis~\ref{hypa}).
In particular, we prove that for each GWA $A$ satisfying these hypotheses, if a new GWA $A'$ is obtained by shifting the factors of the defining polynomial of $A$ by the automorphism $\sigma$, then $\QGr A \equiv \QGr A'$ (Theorem~\ref{thm.translation}).
One consequence of this result is that although $\Mod R_{-1} \not\equiv \Mod R_{0} \not\equiv \Mod R_{1}$ and $\Gr R_{-1} \not\equiv \Gr R_{0} \not \equiv \Gr R_{1}$, nevertheless there is an equivalence
\[ \QGr R_{-1} \equiv \QGr R_{0} \equiv \QGr R_1.
\]
Hence, for any $\lambda \in \kk$, $\QGr R_{\lambda} \equiv \QGr R_{\lambda+1}$. We remark that this particular consequence follows from a more specific result proved in \cite{woncomm}, and was first observed by Sierra \cite{sierraprivate}.

\begin{acknowledgments} The authors would like to thank Daniel Chan, W. Frank Moore, Daniel Rogalski, and James Zhang for helpful conversations. We particularly thank Susan Sierra for directing us to references on Morita contexts and suggesting the relationship with quotient categories that we prove in Section~\ref{sec.morita}, Calum Spicer for his help on an earlier version of this manuscript, and S. Paul Smith for several suggestions and corrections.
\end{acknowledgments}

\section{Preliminaries}
\label{sec:bg}

In this section, we fix basic notation, definitions, and terminology which will be in use for the remainder of the paper.

\subsection{Graded rings and modules}
\label{sec:graded}

Let $\Gamma$ be an abelian semigroup. We say that a $\kk$-algebra $R$ is \emph{$\Gamma$-graded} if there is a $\kk$-vector space decomposition $R = \bigoplus_{\gamma \in \Gamma} R_{\gamma}$
such that $R_{\gamma} \cdot R_{\delta} \subseteq R_{\gamma + \delta}$ for all $\gamma, \delta \in \Gamma$. Each $R_{\gamma}$ is called the \emph{$\gamma$-graded component} of $R$ and each $r \in R_{\gamma}$ is called \emph{homogeneous of degree $\gamma$}.
Similarly, a (right) $R$-module $M$ is \emph{$\Gamma$-graded} if it has a $\kk$-vector space decomposition $M = \bigoplus_{\gamma \in \Gamma} M_{\gamma}$ such that $M_{\gamma} \cdot R_{\delta} \subseteq M_{\gamma + \delta}$ for all $\gamma, \delta \in \Gamma$. Each $M_{\gamma}$ is called the \emph{$\gamma$-graded component} of $M$.
When the group $\Gamma$ is clear from context, we will call $R$ and $M$ simply \emph{graded}.

A homomorphism $f: M \to N$ of $\Gamma$-graded right $R$-modules is called a \emph{graded homomorphism of degree $\delta$} if $f(M_{\gamma}) \subseteq N_{\gamma + \delta}$ for all $\gamma \in \Gamma$. We denote
\[ \uHom_{R}(M,N) = \bigoplus_{\delta \in \Gamma} \uHom_R(M,N)_{\delta}
\]
where $\uHom_R(M,N)_{\delta}$ is the set of all graded homomorphisms $M \to N$ of degree $\delta$. A \emph{graded homomorphism} is a graded homomorphism of degree $0$.

The $\Gamma$-graded right $R$-modules together with the graded right $R$-module homomorphisms (of degree $0$) form a category which we denote $\Gr (R,\Gamma)$. Therefore, 
\[ \Hom_{\Gr (R,\Gamma)}(M,N) = \uHom_R(M,N)_0.
\]
We use lower-case letters to denote full subcategories consisting of finitely-generated objects, so $\gr (R,\Gamma)$ denotes the category of finitely-generated $\Gamma$-graded right $R$-modules.
When $\Gamma = \ZZ$, we omit the group from our notation and refer to these categories as simply $\Gr R$ and $\gr R$.

We say two algebras $R$ and $S$ are \emph{Morita equivalent} if there
is an equivalence of categories $\Mod R \equiv \Mod S$.
If $R$ is $\Gamma$-graded and $S$ is $\Lambda$-graded for some abelian semigroup $\Lambda$, then we say that $R$ and $S$ are \emph{graded Morita equivalent} if there is a Morita equivalence between $R$ and $S$ that is implemented by graded bimodules and so also gives an equivalence between their graded module categories.

The group of autoequivalences of $\gr R$ modulo natural transformation is called the \emph{Picard group} of $\gr R$ and is denoted $\Pic(\gr R)$.
For a $\ZZ$-graded $\kk$-algebra $R$, the \emph{shift functor} is an autoequivalence of $\gr R$ which sends a graded right module $M$ to the new module $M \s{1} = \bigoplus_{j \in \ZZ} M \s{1}_j$, defined by $M \s{1}_j = M_{j+1}$. We write this functor as $\mcS_R$. We use the notation $M\s{i}$ for the module $\mcS_R^i (M)$ and note that $M\s{i}_j = M_{j+i}$. This is the standard convention for shifted modules, although it is the opposite of the convention that is used in \cite{sierra, woncomm, wonpic}.

\subsection{Noncommutative projective schemes of \texorpdfstring{$\ZZ$}{Z}-graded algebras}
\label{sec:tors}
Now suppose that $A$ is a noetherian $\ZZ$-graded $\kk$-algebra and let $M \in \Gr A$. As in \cite{smith2}, define the \emph{torsion submodule of $M$} by
\[ \tau(M) = \text{sum of the finite-dimensional submodules of $M$}.\]
The module $M$ is said to be \emph{torsion} if $\tau(M)=M$ and \emph{torsion-free} if $\tau(M)=0$.
Let $\Tors A$ ($\tors A$, respectively) denote the full subcategory of $\Gr A$ ($\gr A$, respectively) consisting of torsion modules.
This is a Serre subcategory and so we may form the quotient category $\QGr A = \Gr A/\Tors A$ ($\qgr A = \gr A / \tors A$, respectively). 

The shift functor $\mcS$ of $\Gr A$ descends to an autoequivalence of $\QGr A$ and $\qgr A$, which we also denote by $\mcS$. Let $\mathcal{A}$ denote the image of $A$ in the quotient categories. Then $(\QGr A, \mathcal{A}, \mcS)$ ($(\qgr A, \mathcal{A}, \mcS)$, respectively) is the \emph{noncommutative projective scheme} (\emph{noetherian noncommutative projective scheme}, respectively) of $A$.
When $A$ is actually $\NN$-graded, this definition coincides with the noncommutative projective scheme $\Proj A$ defined by Artin and Zhang \cite{AZ}.

It is clear that $\tors A=\fdim A$, the subcategory of $\gr A$ consisting of modules of finite $\kk$-dimension.
Since every $A$-module is a union of its finitely generated submodules, $\Tors A$ can be described as the subcategory of $\Gr A$ consisting of modules which are unions of their finite-dimensional submodules.

\subsection{\texorpdfstring{Generalized Weyl algebras and $\ZZ$-graded simple rings}{Generalized Weyl algebras and Z-graded simple rings}}
\label{sec.gwadef}
Let $R$ be a ring, let $\sigma: R \to R$ an automorphism of $R$, and fix a central element $f \in R$.
The \emph{generalized Weyl algebra (GWA) of degree one} $A=R(\sigma,f)$ is the quotient of $R \langle x, y \rangle$ by the relations
\[xy = f,  \quad  yx = \sigma^{-1}(f), \quad  xr = \sigma(r)x,   \quad yr = \sigma^{-1}(r)y \]
for all $r \in R$.
We call $R$ the \emph{base ring} and $\sigma$ the \emph{defining automorphism} of $A$.
Generalized Weyl algebras were so-named by Bavula \cite{bav}, and many well-studied rings can be realized as GWAs, including the classical Weyl algebras, ambiskew polynomial rings, and generalized down-up algebras. 
There is a $\ZZ$-grading on $R(\sigma, f)$ given by $\deg x = 1$, $\deg y = -1$, and $\deg r = 0$ for all $r \in R$. 
We also remark that if $R$ is commutative, then every $\ZZ$-graded right $A$-module $M$ has an $(R,A)$-bimodule structure as follows: if $m \in M$ is homogeneous of degree $i$, then the left $R$-action of $r \in R$ is given by
\[ r \cdot m = m \cdot \sigma^{-i}(r).
\]

In this paper, every GWA satisfies the following hypothesis.
\begin{hypothesis}\label{hypa} Let $A = R(\sigma, f)$ be the GWA with 
\begin{enumerate}
\item base ring $R=\kk[z]$ and defining automorphism $\sigma(z) = z+1$, or
\item \label{case2} base ring $R=\kk[z,z\inv]$ and defining automorphism $\sigma(z) = \xi z$ for some nonroot of unity $\xi \in \kk^\times$. 
\end{enumerate}
Assume that $f \in \kk[z]$ is monic and in case~\ref{case2} assume that $0$ is not a root of $f$. Let $\Z$ denote the set of roots of $f$ and for $\m \in \Z$, let $n_\m $ denote the multiplicity of $\m $ as a root of $f$.  
\end{hypothesis}
For any $\eta \in R^\times$, there is an isomorphism $A \cong R(\sigma, \eta f)$ mapping $x$ to $\eta x$, $y$ to $y$, and $z$ to $z$.
Hence, by adjusting by an appropriate unit in $R$, every GWA with base ring and defining automorphism as above is isomorphic to one satisfying the additional assumptions in Hypothesis~\ref{hypa}.

Since we are assuming that $R = \kk[z]$ or $\kk[z,z\inv]$, the GWA $A$ is a noetherian domain \cite[Proposition 1.3]{bav} of Krull dimension one \cite[Theorem 2]{bavkdim}. 
Hence, $A$ is an Ore domain. We denote by $\cQ(A)$ the quotient division ring of $A$, obtained by localizing $A$ at all nonzero elements. The \emph{rank} of an $A$-module $M$ is the dimension of $M \otimes_A \cQ(A)$ over the division ring $\cQ(A)$. 
Since $A$ is $\ZZ$-graded, we also consider $\QgrA$, the  graded quotient division ring of $A$, obtained by localizing $A$ at all nonzero homogeneous elements.
The field of fractions of $A_0 = R$ is $\kk(z)$, so $\QgrA$ is a skew Laurent ring over $\kk(z)$:
\begin{align}
\label{eq.qgrA}
\QgrA = \kk(z)[x,x^{-1}; \sigma] = \bigoplus_{i \in \ZZ} \kk(z)x^i.
\end{align}

We use the notation $\sigma_{\kk}$ to denote the action of $\sigma$ on the $\kk$-points of $\Spec R$; i.e., for $\lambda \in \kk$, if $\sigma(z) = z +1$ then $\sigma_{\kk}(\lambda) = \lambda - 1$ and if $\sigma(z) = \xi z$ then $\sigma_{\kk}(\lambda) =  \xi\inv \lambda$.
We say two roots of $f$ are \emph{congruent} if they are on the same $\sigma_\kk$-orbit. For a GWA $A$ satisfying Hypothesis~\ref{hypa}, by \cite{hodges, bavgldim}, the global dimension of $A$ depends only on the roots of $f$:
\[ \gldim A = \begin{cases} 
1 & \text{if $f$ has no multiple roots and no congruent roots} \\
2 & \text{if $f$ has a congruent root but no multiple roots} \\
\infty & \text{if $f$ has multiple roots.}
\end{cases}
\]
It follows from \cite[Theorem 3]{bavkdim} that $A$ is simple if and only if no two distinct roots of $f$ are congruent.

In \cite{BR}, Bell and Rogalski showed that simple $\ZZ$-graded domains are closely related to GWAs satisfying Hypothesis~\ref{hypa}.

\begin{theorem}[Bell and Rogalski, {\cite[Theorem 5.8]{BR}}]Let $S = \bigoplus_{i \in \ZZ} S_i$ be a simple finitely generated $\ZZ$-graded domain of GK dimension 2 with $S_i \neq 0$ for all $i \in \ZZ$. Then $S$ is graded Morita equivalent to a GWA $R(\sigma,f)$ satisfying Hypothesis~\ref{hypa} where additionally no two distinct roots of $f \in R$ are congruent. 
\end{theorem}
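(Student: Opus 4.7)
The plan is to realize $S$, up to graded Morita equivalence, inside a skew Laurent ring and then exhibit it as a GWA satisfying Hypothesis~\ref{hypa}. The starting point is the graded quotient ring $Q := \cQ_{\mathrm{gr}}(S)$, obtained by inverting all nonzero homogeneous elements of $S$. Since $S_i \neq 0$ for every $i \in \ZZ$, the degree-zero part $D := Q_0$ is a field---the quotient field of $S_0$---and $Q = D[x, x^{-1}; \tau]$ for some automorphism $\tau$ of $D$, in the shape of~\eqref{eq.qgrA}. Because $\GKdim S = 2$ and $S_i \neq 0$ for all $i$, $S_0$ has GK dimension one, so $D$ has transcendence degree one over $\kk$. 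Simplicity of $S$ then forces $\tau$ to have infinite order and $D^\tau = \kk$: a finite-order $\tau$ or a nontrivial fixed field would produce central elements of $Q$ whose intersections with $S$ generate proper graded two-sided ideals. By L\"uroth's theorem $D \cong \kk(z)$, and the classification of infinite-order automorphisms of $\kk(z)$ fixing $\kk$ shows that, after a change of coordinate, $\tau$ is conjugate to either $z \mapsto z+1$ or $z \mapsto \xi z$ with $\xi$ a nonroot of unity---matching the dichotomy in Hypothesis~\ref{hypa}.

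Next I would reconstruct $S$ inside $Q$. Writing $S_n = M_n x^n$ for nonzero $\kk$-subspaces $M_n \subseteq D$, the multiplication in $S$ becomes $M_n \cdot \tau^n(M_m) \subseteq M_{n+m}$, and $M_0 = S_0$ is a subring of $D$ with quotient field $D$. Because $S$ is noetherian and finitely generated, each $M_n$ is a finitely generated fractional $S_0$-ideal. The reflexive hulls of the $M_n$ define a $\ZZ$-indexed family of invertible modules over $\ol{S_0}$, where $\ol{S_0}$ denotes the integral closure of $S_0$ in $D$. Passing from $S$ to the graded endomorphism ring of a suitable graded progenerator built from shifts of $S$---a standard graded Morita construction---lets me replace $S_0$ by $\ol{S_0}$, which equals $R = \kk[z]$ or $R = \kk[z, z^{-1}]$ in the two cases. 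Once $S_0 = R$, the product $f := xy \in R$, defined up to a unit adjustment of $x$ and $y$ as explained after Hypothesis~\ref{hypa}, forces the GWA relations $xy = f$, $yx = \sigma^{-1}(f)$, $xr = \sigma(r)x$, $yr = \sigma^{-1}(r)y$ (with $\sigma = \tau$), exhibiting $S$ as graded Morita equivalent to $R(\sigma, f)$. Adjusting by a unit of $R$ normalizes $f$ to be monic, and in case~\ref{case2} the hypothesis that $0$ is not a root of $f$ reflects that otherwise $xy$ would share a unit factor with $z$, contradicting the fractional-ideal structure imposed by $\GKdim S = 2$.

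The main obstacle, I expect, is the second paragraph: controlling the sequence $(M_n)$ of fractional ideals using only the axiomatic hypotheses on $S$, and then constructing the specific graded progenerator whose graded endomorphism ring is literally a GWA with base ring $R$. Once this normalization is achieved, the final assertion follows quickly: graded Morita equivalence preserves simplicity, and the criterion recorded in the excerpt just above the theorem statement (from \cite[Theorem 3]{bavkdim}) says that the GWA $R(\sigma, f)$ is simple precisely when no two distinct roots of $f$ lie on the same $\sigma_\kk$-orbit, i.e., when no two distinct roots are congruent.
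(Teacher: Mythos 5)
The paper itself does not prove this theorem---it is quoted from Bell and Rogalski \cite[Theorem 5.8]{BR}---so there is no in-paper proof to compare against. Your overall plan (pass to the graded quotient division ring $\cQ_{\mathrm{gr}}(S) = D[x,x^{-1};\tau]$, pin down the field $D$ and the automorphism $\tau$, normalize via a graded progenerator so that the degree-zero part becomes $R$, then read off the GWA relations and apply Bavula's simplicity criterion) is in the right spirit and matches the general architecture of Bell and Rogalski's argument.

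That said, two steps in your sketch are not yet proofs. First, ``by L\"uroth's theorem $D \cong \kk(z)$'' does not follow from what you have: L\"uroth tells you that intermediate fields of $\kk(z)$ are purely transcendental, whereas you need to show that an abstract field $D$ of transcendence degree one over $\kk$ with $D^{\tau}=\kk$ \emph{is} $\kk(z)$, i.e.\ that positive genus cannot occur. Genus at least two is excluded because such function fields have finite automorphism groups, but a genus-one function field carries infinite-order translations whose fixed field is $\kk$, so a separate argument---drawing on simplicity, noetherianity, or the fractional-ideal structure of the graded pieces $M_n$---is required to eliminate the elliptic case and land in the dichotomy of Hypothesis~\ref{hypa}. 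Second, your middle paragraph (reflexive hulls of the $M_n$, replacing $S_0$ by its integral closure via a graded progenerator, and exhibiting the resulting endomorphism ring as a literal GWA over $R$) is where nearly all of the technical content of \cite{BR} lives, and you acknowledge this yourself; as written it is a plausible roadmap rather than a proof. One must in particular verify that the candidate progenerator is finitely generated, that the resulting graded endomorphism ring still has the correct graded quotient ring and GK dimension, and that after clearing units the element $f = xy$ actually lies in $R$ rather than merely in $D$. The closing step---invoking the criterion from \cite[Theorem 3]{bavkdim} to translate simplicity of $R(\sigma,f)$ into the condition that no two distinct roots of $f$ are congruent---is correct.
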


Hence, if we are interested only in the category of graded modules over simple $\ZZ$-graded rings of GK dimension 2, it suffices to restrict our attention to these GWAs. Starting in Section~\ref{sec.grmods}, we operate under the following additional hypothesis:
\begin{hypothesis}\label{hyp} Let $A = R(\sigma, f)$ be a GWA satisfying Hypothesis~\ref{hypa}. Further assume that $f \in \kk[z]$ has no two distinct roots on the same $\sigma_\kk$-orbit so that $A$ is simple. 
\end{hypothesis}

Thus, when we restrict to Hypothesis~\ref{hyp},
then it follows that $\gldim A \neq 2$.

\section{A translation principle for GWAs}
\label{sec.morita}

Throughout, suppose $A$ is a GWA satisfying Hypothesis~\ref{hypa}. 
The simple graded right $A$-modules were described by Bavula in \cite{bavkdim} and we adopt his terminology here. 
The group $\grp{\sigma}$ acts on $\MaxSpec R$, the set of maximal ideals of $R$. Specifically, the orbit of $(z - \lambda) \in \MaxSpec R$ is given by 
\[\cO_\lambda = \left\{ \sigma^{i}(z - \lambda) \st i \in \ZZ \right\} =  \left\{ (z - \sigma_{\kk}^i(\lambda)) \st i \in \ZZ \right\}.
\]
If the $\sigma$-orbit of $(z-\lambda)$ contains no factors of $f$, it is called \textit{nondegenerate}, otherwise it is called \textit{degenerate}. If two distinct factors $(z-\lambda)$ and $\sigma^i(z-\lambda)$ lie on the same $\sigma$-orbit, then $\lambda$ and $\sigma_{\kk}^i(\lambda)$ are congruent roots of $f$ and we call $\cO_{\lambda}$ a \textit{congruent orbit}. Otherwise, a degenerate orbit is called a \textit{non-congruent orbit}.


\begin{lemma}[{\cite[Theorem 1]{bavkdim}}] \label{lem.simples} Let $A = R(\sigma, f)$. The simple modules of $\gr A$ are given as follows.
\begin{enumerate}
\item For each nondegenerate orbit $\cO_\lambda$ of $\MaxSpec R$, one has the module 
$\displaystyle M_\lambda = \frac{A}{(z-\lambda) A}$ and its shifts $M_{\lambda}\s{n}$ for each $n \in \ZZ$.

\item For each degenerate non-congruent orbit $\cO_{\m }$, one has
\begin{enumerate}
\item the module $\displaystyle M\ma = \frac{A}{(z - \m )A + xA}$ and its shifts $M^{-}_{\m } \s{n}$ for each $n \in \ZZ$ and
\item the module $\displaystyle M\pa = \frac{A}{\sigma^{-1}(z - \m )A + yA} \s{-1}$ and its shifts $M^{+}_{\m } \s{n}$ for each $n \in \ZZ$.
\end{enumerate}

\item For each degenerate congruent orbit $\cO_{\m }$, label the roots on the orbit so that they are given by $\m, \sigma_{\kk}^{i_1}(\m ), \dots, \sigma_{\kk}^{i_r}(\m )$ where $0 > i_1 > \dots > i_r$. Set $i_0 = 0$. Then one has
\begin{enumerate}
\item the module $\displaystyle M\ma = \frac{A}{(z - \m )A + xA}$ and its shifts $M^{-}_{\m } \s{n}$ for each $n \in \ZZ$,
\item for each $k = 1, \dots, r$, the module \[M_{\m }^{(i_{k-1}, i_k]} = \frac{A}{\sigma^{i_k}((z- \m ))A + xA + y^{i_k- i_{k-1}} A} \s{i_k}\] and its shifts $M_{\m }^{(i_{k-1}, i_k]} \s{n}$ for each $n \in \ZZ$, and
\item the module $\displaystyle M\pa = \frac{A}{\sigma^{i_r-1}(z - \m )A + yA} \s{i_r -1}$ and its shifts $M^{+}_{\m } \s{n}$ for each $n \in \ZZ$.
\end{enumerate}
\end{enumerate}
\end{lemma}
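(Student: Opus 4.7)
The plan is to classify the simple objects of $\gr A$ by showing each is cyclic, presented as $A/I$ for a maximal graded right ideal $I$, and then to enumerate such $I$ according to the type of $\sigma_\kk$-orbit in $\MaxSpec R$ on which the module is supported. The essential structural input is that $A_0 = R$ is a PID ($\kk[z]$ or $\kk[z,z^{-1}]$) and that, because $xr = \sigma(r)x$ and $yr = \sigma^{-1}(r)y$, right multiplication by $x$ (resp.\ by $y$) shifts the $R$-annihilator of a homogeneous element along the $\sigma_\kk$-orbit of that annihilator.

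First I would observe that any simple graded right $A$-module $M$ is cyclic: a nonzero homogeneous element generates a nonzero graded submodule, which must equal $M$. After a shift we may assume the generator $m$ lies in degree $0$, so $M \cong A/\ann_A(m)$. Since $A$ is generated over $R$ by $x$ and $y$, with $A_i = x^i R$ for $i \geq 0$ and $A_i = y^{-i} R$ for $i \leq 0$, we have $M_i = m\cdot x^i R$ or $M_i = m \cdot y^{-i}R$ accordingly; by the semilinearity observation, each nonzero $M_i$ is a cyclic $R$-module whose support lies on the single $\sigma_\kk$-orbit $\cO_\lambda$ of $\ann_R(m)$. A short argument using graded simplicity then forces $\ann_R(m)$ to be maximal, i.e.\ of the form $(z-\lambda)$ and not a higher power, for otherwise $(z-\lambda) \cdot m \cdot A$ would be a proper nonzero graded submodule.

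At this point the problem reduces to classifying, up to shift, the possible $\ann_A(m)$ when $\ann_R(m) = (z-\lambda)$, according to the type of $\cO_\lambda$. In the nondegenerate case, neither $f$ nor any $\sigma$-translate of $f$ vanishes at any element of the orbit, so both $x$ and $y$ act injectively between adjacent components; one checks directly that $A/(z-\lambda)A$ is graded simple and that every simple supported on the orbit is, up to shift, isomorphic to $M_\lambda$, giving (1). In the non-congruent degenerate case, $f$ has a single root $\alpha$ on the orbit, which forces exactly one of $x$ or $y$ to kill some component; the two possible ``break points'' yield the simples $M^-_\alpha$ and $M^+_\alpha$ together with their shifts, giving (2).

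The congruent case is the main obstacle, requiring careful combinatorial bookkeeping. Here $f$ has roots $\alpha, \sigma_\kk^{i_1}(\alpha), \ldots, \sigma_\kk^{i_r}(\alpha)$ on the orbit with $0 > i_1 > \cdots > i_r$, and the $x$- and $y$-actions between adjacent components may vanish at any of these roots, cutting the orbit into $r+2$ consecutive segments that can serve as the support of a simple module. For each cut one produces a candidate cyclic module: the leftmost piece $M^-_\alpha$ on which $x$ kills the generator, the rightmost piece $M^+_\alpha$ on which $y$ kills the generator, and for each pair of consecutive roots a finite-length middle module $M_\alpha^{(i_{k-1}, i_k]}$ whose generator is killed by $x$ going up and by a suitable power of $y$ going down. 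I would verify graded simplicity of each candidate using the relations $xy = f$ and $yx = \sigma^{-1}(f)$ applied within the segment, and completeness by arguing that any graded simple supported on $\cO_\alpha$ must arise from exactly one such cut. Combining the three cases produces the full list in the statement.
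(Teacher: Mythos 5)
The paper itself does not prove this lemma; it is quoted directly from Bavula \cite[Theorem 1]{bavkdim}, so there is no in-paper argument to compare against. Your strategy --- reduce a graded simple $M$ to a cyclic quotient $A/I$ with generator $m$ in degree $0$, argue that $\ann_R(m)$ must be a maximal ideal $(z-\lambda)$, track the $R$-support of the graded components along the $\sigma_{\kk}$-orbit of $\lambda$ via the semilinearity $xr=\sigma(r)x$, and classify by where the orbit is ``cut'' by the vanishing of $f$ and its $\sigma$-translates --- is the standard route to Bavula's classification and does recover the stated list.

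That said, the proposal reads as a detailed plan rather than a proof, and several load-bearing verifications are compressed into ``one checks directly.'' The identities $mx\cdot y = f(\lambda)\,m$ and $my\cdot x = \sigma^{-1}(f)(\lambda)\,m$ are what pin down exactly when a component is annihilated; in case (2) you still need the argument that $mxy=0$ together with $mx\neq 0$ would yield the proper nonzero graded submodule $\bigoplus_{i\geq 1}M_i$, forcing $mx=0$ (and symmetrically for $y$). Case (3), which you correctly flag as the main obstacle, is left entirely schematic: you would need to actually verify that each defining right ideal of $M_\alpha^{(i_{k-1},i_k]}$ is maximal among graded right ideals (so the quotient is simple), and that every graded simple supported on a congruent orbit arises from exactly one of the $r+2$ cuts. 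Finally, the preliminary step $\ann_R(m)\neq 0$ should be made explicit; your ``$(z-\lambda)\cdot m\cdot A$ is a proper nonzero submodule'' argument covers it, but as written the maximality claim quietly presupposes a nonzero annihilator.
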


The notation $M\pma$ is intended to reflect the fact that $M\ma$ is nonzero only in sufficiently negative degree while $M\pa$ is nonzero only in sufficiently positive degree. The module $M_{\m }^{(i_{k-1},i_k]}$ is nonzero in degrees $(i_{k-1},i_k] \cap \ZZ$. 

\begin{example}If $R = \kk[z]$, $\sigma(z) = z+1$, and $f = z(z-1)^2(z-3)$, then there is a single degenerate congruent orbit $\cO_0$ with roots $0, \sigma_{\kk}^{-1}(0) =  1, \sigma_{\kk}^{-3}(0) = 3$. In this case, the simple modules are given by the shifts of $M_{\lambda} = A/(z - \lambda) A$ for each $\lambda \in \kk \setminus \ZZ$ as well as all shifts of the modules 
\begin{itemize}
    \item $\displaystyle M_0^- = \frac{A}{zA + xA}$,
    \item $\displaystyle M_0^{(0,1]} = \frac{A}{(z-1)A + xA + yA} \s{-1}$,
    \item $\displaystyle M_0^{(1,3]} = \frac{A}{(z-3)A + xA + y^2A} \s{-3}$, and
    \item $\displaystyle M_0^+ = \frac{A}{(z-4)A + xA + yA} \s{-4}$.
\end{itemize}
These modules are exactly the simple modules appearing in the composition series of the module $A/zA$.
\end{example}

As described in the introduction, Stafford proved a translation principle for the infinite dimensional primitive factors $R_{\lambda}$ of $U(\fsl_2)$ \cite{St2}. These rings are the GWAs $\kk[z](\sigma,f)$ of type (1) in Hypothesis~\ref{hypa} with quadratic defining polynomial $f = z(z-\lambda) \in \kk[z]$.
Stafford showed that for $\lambda \in \kk$, the rings $\kk[z](\sigma,z(z-\lambda))$ and $\kk[z](\sigma,z(z-\lambda-1))$ are Morita equivalent unless $\lambda = 0$ or $-1$. 
The cases $\lambda=0,-1$ correspond to when the first or second GWA, respectively, has infinite global dimension. 
Hodges also studied Morita equivalences between these rings, and used $K$-theoretic techniques, along with Stafford's result, to prove that two primitive factors $R_\lambda$ and $R_{\mu}$, are Morita equivalent if and only if $\lambda\pm\mu \in \ZZ$ \cite{hodges2}.

Several authors have studied Morita equivalences between GWAs  $R(\sigma,f)$ satisfying Hypothesis~\ref{hypa}(1) with higher degree defining polynomials $f$. Jordan gave sufficient conditions on $f$ and $f'$ for $R(\sigma, f)$ and $R(\sigma, f')$ to be Morita equivalent \cite[Lemma 7.3(iv)]{jordan2}. Richard and Solotar gave necessary conditions for the Morita equivalence of $R(\sigma, f)$ and $R(\sigma,f')$ with the additional hypotheses that the GWAs are simple and have finite global dimension \cite{richardsolotar}.
Shipman fully classified the equivalence classes of GWAs satisfying Hypothesis~\ref{hypa}(1) under the stronger notion of strongly graded Morita equivalences \cite{shipman}.

Our treatment builds upon \cite{St2}. We extend Stafford's translation principle to all GWAs satisfying Hypothesis~\ref{hypa}. 
This was essentially done by Jordan \cite[Lemma 7.3(iv)]{jordan2}---our contribution is to pay close attention to
those situations in which Stafford's techniques do \emph{not} give Morita equivalences.
In these cases, we show that Stafford's methods still yield a graded Morita context.
Using this Morita context, we prove that there is an equivalence between the \emph{quotient categories} which play the role of the noncommutative projective schemes for these $\ZZ$-graded rings. Hence, the results of this section can be viewed as a translation principle for the noncommutative projective schemes of GWAs, which is analogous to Van den Bergh's translation principle for central quotients of the four-dimensional Sklyanin algebra \cite{vdb3}. 

Before we proceed, we recall some details on graded Morita contexts and the graded Kato--M\"{u}ller Theorem in the $\ZZ$-graded setting.
Recall that a \emph{Morita context} between two algebras $T$ and $S$ is a 6-tuple $(T,S,{}_S M_T, {}_T N_S, \phi, \psi)$ where ${}_SM_T$ and ${}_TN_S$ are bimodules and $\phi:N \tensor_S M \to T$ and $\psi:M \tensor_T N\to S$ are bimodule morphisms satisfying 
\begin{itemize}
    \item $\psi(m \tensor n)m' = m\phi(n \tensor m')$
    \item $\phi(n \tensor m)n' = n\psi(m \tensor n')$
\end{itemize}
for all $m,m' \in M$, $n,n' \in N$. We refer to $I=\im\phi$ and $J=\im\psi$ as the \emph{trace ideals} of the Morita context.
If both $\phi$ and $\psi$ are surjective, then this Morita context gives a Morita equivalence between $T$ and $S$ and so $\Mod T \equiv \Mod S$.

Let $T$ be an algebra and let $M$ be a right $T$-module. One important construction of a Morita context is given by $(T, S, M, M^*, \phi, \psi)$ where $M^* = \Hom_T(M,T)$, $S = \Hom_T(M,M)$, and where $\phi: M^* \otimes_S M \to T$ and $\psi: M \otimes_T M^* \to S$ are defined by $\phi(f \otimes m) = f(m)$ and $\psi(m \otimes f)(m') = mf(m')$ for $f \in M^*$ and $m,m' \in M$.
This Morita context is a Morita equivalence if and only if $M$ is a progenerator of $\Mod T$.

This theory carries over to the graded setting (see \cite[Chapter 2]{hazrat}). 
Let $\Gamma$ be an abelian semigroup.
If $T$ is a $\Gamma$-graded algebra and $M$ is a finitely generated graded right $A$-module, then both $M^* = \Hom_T(M,T)$ and $S = \Hom_T(M,M)$ are, in fact, graded with $M^* =  \uHom_T(M,T)$ and $S = \uHom_T(M, M)$. Then $(T, S, M, M^*, \phi, \psi)$ is a \emph{graded Morita context}. If $M$ is a graded progenerator then this graded Morita context gives a graded Morita equivalence so $\Mod T \equiv \Mod S$ and $\Gr T \equiv \Gr T$. 
For any two-sided graded ideal $I$ of $T$, the class
\[\scP_I=\{M \in \Gr T \mid MI=0\}\]
is a rigid closed subcategory of $\Gr T$.
Let $\scC_I$ be the smallest localizing subcategory
of $\Gr T$ containing $\scP_I$.
The following is a graded version of the Kato--M\"{u}ller Theorem \cite{kato,muller}.

\begin{theorem}[Iglesias and N\v{a}st\v{a}sescu \cite{IN}]
Let $(T,S,{}_S M_T, {}_T N_S, \phi, \psi)$ be a $\Gamma$-graded Morita context between $\Gamma$-graded rings $T$ and $S$. Set $I=\im\phi$ and $J=\im\psi$. The graded functors
\begin{align*}
\uHom_T(M,-):\Gr T \rightleftarrows \Gr S : \uHom_S(N,-)
\end{align*}
induce functors between the quotient categories
\begin{align}\label{eq.KM}
\uHom_T(M,-):\Gr T/\scC_I \rightleftarrows \Gr S/\scC_J: \uHom_S(N,-).
\end{align}
\end{theorem}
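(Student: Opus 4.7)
My plan is to adapt the classical Kato--M\"uller theorem \cite{kato, muller} to the $\Gamma$-graded setting, with the key structural input coming from the Morita identities. First, I would verify that $F := \uHom_T(M, -)$ and $G := \uHom_S(N, -)$ define well-defined graded functors $\Gr T \to \Gr S$ and $\Gr S \to \Gr T$ respectively. The graded $(S,T)$- and $(T,S)$-bimodule structures on $M$ and $N$ equip the Hom spaces with natural $\Gamma$-gradings and the appropriate module structures, and finite generation of $M$ ensures $\Hom_T(M,-) = \uHom_T(M,-)$ and that $F$ preserves direct sums.

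The heart of the argument is a direct calculation using the Morita identity $\psi(m \otimes n) \cdot m' = m \cdot \phi(n \otimes m')$, which shows $F(\scP_I) \subseteq \scP_J$. For $X \in \scP_I$, $f \in \uHom_T(M,X)$, and $s = \psi(m \otimes n) \in J$, one computes for any $m' \in M$:
\[ (f \cdot s)(m') = f(\psi(m \otimes n) \cdot m') = f(m \cdot \phi(n \otimes m')) = f(m) \cdot \phi(n \otimes m') \in X \cdot I = 0. \]
Hence $f \cdot s = 0$, so $F(X) \cdot J = 0$, i.e., $F(X) \in \scP_J$. Symmetrically, $G(\scP_J) \subseteq \scP_I$ follows from the other Morita identity.

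To extend this to $F(\scC_I) \subseteq \scC_J$, I would use that $\scC_I$ is the closure of $\scP_I$ under the operations of a localizing subcategory: subobjects, quotients, extensions, and direct sums. Since $F$ is left exact and preserves direct sums, the remaining step is to verify that $F$ sends extensions of $\scC_I$-objects to objects in $\scC_J$. Given $0 \to Y \to X \to Z \to 0$ in $\Gr T$ with $F(Y), F(Z) \in \scC_J$, the long exact $\uExt$ sequence places $F(X)$ between $F(Y) \in \scC_J$ and a subobject of $F(Z) \in \scC_J$, and closure of $\scC_J$ under subobjects and extensions gives $F(X) \in \scC_J$. A transfinite induction then handles all of $\scC_I$. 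Finally, the universal property of the Serre quotient produces the induced functor $\overline{F}: \Gr T / \scC_I \to \Gr S / \scC_J$, and likewise $\overline{G}$.

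The main obstacle is this final extension---upgrading preservation of $\scP_I$ to preservation of $\scC_I$. The difficulty is that $F$ is only left exact, so for a quotient $X \twoheadrightarrow Z$ the image of $F(X) \to F(Z)$ need not be all of $F(Z)$; one must instead control the cokernel inside $\uExt^1_T(M, Y)$. The closure properties of the localizing subcategory $\scC_J$, combined with the graded adjunction $(- \otimes_S M) \dashv \uHom_T(M, -)$, are what ultimately allow the extension from $\scP_I$ to all of $\scC_I$ to go through.
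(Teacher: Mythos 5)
The paper does not prove this statement; it is cited verbatim from Iglesias and N\v{a}st\v{a}sescu \cite{IN} with no proof supplied, so there is no in-paper argument to compare against. Evaluating your proposal on its own merits: the core calculation showing $F(\scP_I)\subseteq\scP_J$ via the Morita identity $(f\cdot\psi(m\otimes n))(m')=f(m)\cdot\phi(n\otimes m')\in X\cdot I=0$ is correct, and it is the right structural input.

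The gap is exactly the one you flag in your last paragraph, and it is not resolved. A localizing subcategory is closed under subobjects, \emph{quotients}, extensions, and direct sums. Your transfinite induction handles subobjects (left exactness), direct sums (given finite generation of $M$, a hypothesis not actually present in the theorem statement), and extensions (via the long exact sequence), but it does not handle quotients: if $X\twoheadrightarrow Z$ with $X\in\scC_I$, left exactness gives only $\im(F(X)\to F(Z))\in\scC_J$, and the cokernel sits inside $\uExt^1_T(M,\ker)$ with no control. Saying ``the adjunction ultimately allows the extension to go through'' does not close this; it names the tool without supplying the argument. In fact $F(\scC_I)\subseteq\scC_J$ need not hold in this generality, and the standard way to get the induced functor is not to chase torsion directly but to define $\bar F=\pi_J\circ F\circ\omega_I$ using the section functor $\omega_I$ (right adjoint to $\pi_I$), which produces a functor on quotient categories unconditionally; the content of the Kato--M\"uller theorem is then that this $\bar F$ and its partner $\bar G$ are mutually quasi-inverse. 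Note also that in the paper's actual application (Proposition~\ref{prop.collapse}(3)) the trace ideal $I$ is idempotent, since it is the trace ideal of a projective module, so $\scC_I=\scP_I$ by \cite[Remark 2.5]{IN} and the delicate passage from $\scP_I$ to $\scC_I$ never arises; but the theorem as stated is for arbitrary graded Morita contexts and your argument would need to handle that generality.
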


Note that $\uHom_T(M,-)$ intertwines the shift functors of $\Gr T$ and $\Gr S$. In particular, if $X$ is a graded $T$-module, then $\uHom_T(M,\mcS_T X) = \mcS_S \uHom_T(M,X)$ \cite[Lemma 2.2]{GG}.
This result also descends to the quotient categories: the functor induced by $\uHom_T(M,-)$ commutes with the functors induced by the shift functors of $\Gr T$ and $\Gr S$. 

Let $A = R(\sigma,f)$ be a GWA satisfying Hypothesis~\ref{hypa} with $n = \deg(f) > 0$.
Let $h \in \kk[z]$ be a non-constant factor of $f$ and define $g=h\inv f$. Define the graded module
\[ M = hA+xA\]
and construct a graded Morita context 
\begin{align}\label{eq.morita}
(A,B,{}_BM_A,{}_A{M^*}_B,\phi,\psi) \tag{$\dagger$}
\end{align}
as discussed above with $M^* = \uHom_{A}(M,A)$ and $B = \uHom_A(M,M)$.
We make the identification 
\[M^*=\{p \in \QgrA \mid pM \subset A\}\] 
where $\QgrA = \kk(z)[x,x\inv;\sigma]$ is the graded quotient division ring of $A$ \cite[Proposition 3.1.15]{mcconnell}. 
Similarly, we make the identification
\[B = \{p \in \QgrA \mid pM \subset M\}.\]
Under this identification, the graded bimodule maps 
$\phi:M^* \tensor_B M \to A$ and $\psi:M \tensor_A M^* \to B$ are given by $\phi(m',m) = m'm$ and $\psi(m,m')=mm'$ for $m \in M$ and $m' \in M'$.
Since $M$ is not a graded progenerator in general, this graded Morita context is not always a graded Morita equivalence. However, we will show that it induces a graded equivalence between certain quotient categories of $\Gr A$ and $\Gr B$.
Part (3) of the lemma below recovers \cite[Corollary 3.3]{St2} when $R = \kk[z]$ and $f \in \kk[z]$ is quadratic.

\begin{lemma}
\label{lem.mcontext}
Retain notation as above. 
Suppose $\beta$ is a root of $f$ of multiplicty one.
Set $h=z-\beta$ and set $d=\gcd(\sigma\inv(g),h)$ in $A$.
Let $A'=R(\sigma,f')$ be the GWA with $f'=\sigma(h)g$.
Then
\begin{enumerate}
\item $B \iso A'$ so \eqref{eq.morita} gives a graded Morita context between $A$ and $A'$,
\item $M$ is a projective right $A$-module, and
\item $M$ is a generator for $A$ if and only if $d = 1$.
\end{enumerate}
\end{lemma}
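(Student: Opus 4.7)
The plan is to work throughout inside $\QgrA = \kk(z)[x, x\inv; \sigma]$, identifying $M^*$ and $B$ as subsets via the descriptions above. A preliminary computation, using $f = hg$ and $xy = f$, yields the explicit homogeneous components $M_n = Rx^n$ for $n \geq 1$, $M_0 = hR$, and $M_n = hRy^{-n}$ for $n \leq -1$, which will be used throughout.

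For (1), I would define a graded $\kk$-algebra map $\Phi : A' \to \QgrA$ by $x' \mapsto x$, $y' \mapsto h\sigma\inv(g) x\inv$, $z \mapsto z$, and verify the defining relations of $A'$ directly: $\Phi(x')\Phi(y') = \sigma(h)g = f'$ and $\Phi(y')\Phi(x') = h\sigma\inv(g) = \sigma\inv(f')$, while the $R$-commutation is immediate. Injectivity of $\Phi$ is automatic since $A'$ is a domain. That $\Phi(A') \subset B$ follows from checking $\Phi(x') M, \Phi(y') M \subset M$ using the formulas above. For surjectivity I would compute $B_k$ degree-wise from $B = \{p \in \QgrA : pM \subset M\}$: in positive degree $k \geq 1$ one finds $B_k = Rx^k$, matching $R \cdot \Phi(x')^k$; in negative degree $k \leq -1$ one obtains $B_k = (h/\sigma^k(h)) R y^{-k}$, which equals $R \cdot \Phi((y')^{-k})$ after collecting powers of $\sigma$. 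Hence $\Phi$ induces an isomorphism $A' \iso B$.

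For (2), I would apply the dual basis lemma with generating set $\{h, x\} \subset M$: projectivity reduces to producing $p_1, p_2 \in M^*$ with $hp_1 + xp_2 = 1$ in $\QgrA$. A short calculation shows $M^*_0 = R$ and $M^*_{-1} = \sigma\inv(g) R \cdot x\inv$. Writing $p_1 = a \in R$ and $p_2 = \sigma\inv(g) c \, x\inv$ with $c \in R$, and using the identity $x \cdot \sigma\inv(g) c \cdot x\inv = g\sigma(c)$ in $\QgrA$, the equation becomes $ha + g \sigma(c) = 1$ in $R$. Because $h = z - \beta$ is linear and has multiplicity exactly one as a root of $f = hg$, we have $\gcd_R(h, g) = 1$, so such $a, c$ exist by B\'ezout and $M$ is projective.

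For (3), $M$ is a generator iff the trace ideal $I = \im \phi \subset A$ is the whole ring, and since $I$ is a graded two-sided ideal this reduces to $1 \in I_0 = \sum_n M^*_{-n} M_n$. The decisive contributions are $M^*_0 M_0 = hR$ and $M^*_{-1} M_1 = \sigma\inv(g) R$, and a direct computation shows that every remaining $M^*_{-n} M_n$ is contained in $hR + \sigma\inv(g) R$ (for $n \geq 2$ the product is divisible by $\sigma\inv(f)$, hence by $\sigma\inv(g)$; for $n \leq -1$ it visibly contains $h$ as a factor). Hence $I_0 = hR + \sigma\inv(g) R = dR$ and the assertion follows. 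The main technical obstacle, shared across the three parts, is the careful bookkeeping needed to rewrite $y^k$ as $\prod_{i=1}^{k} \sigma^{-i}(f) \cdot x^{-k}$ inside $\QgrA$ and to propagate this through the definitions of $M^*_k$ and $B_k$; once the resulting formulas are in hand, each of the three conclusions follows cleanly.
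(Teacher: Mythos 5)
Your proof is correct, and its overall architecture matches the paper's, but part (1) takes a genuinely different route that is worth flagging.

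In part (1), the paper also produces the subring $S = \langle x,\, x^{-1}\sigma(h)g,\, z\rangle \cong A'$ inside $B$ (your $\Phi(y') = h\sigma^{-1}(g)x^{-1}$ is the same element, rewritten), but then concludes $B = S$ by a soft argument: since $Bq \subset M \subset S$ for any $q\in M$, and $S$ is a graded maximal order in its graded quotient ring (citing Jordan and Van den Bergh--Van Oystaeyen), $B$ cannot be strictly larger than $S$. You instead compute $B_k$ degree by degree and match it with $R\,\Phi(x')^k$ and $R\,\Phi((y')^{-k})$. Both are valid; yours is more elementary and self-contained, at the cost of more bookkeeping, while the paper's argument offloads the work to a cited structural theorem. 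One small repair is needed in your version: ``injectivity of $\Phi$ is automatic since $A'$ is a domain'' is not literally a theorem (domains can map non-injectively). What saves you is that $\Phi$ is a degree-zero graded ring map restricting to the identity on $A'_0 = R$; if $\ker\Phi$ were nonzero it would contain a nonzero homogeneous element $p$ of some degree $n$, and then $p\cdot(y')^n$ (or $p\cdot(x')^{-n}$) would be a nonzero element of $\ker\Phi\cap R = 0$, a contradiction, using that $A'$ is a domain. Alternatively, $\Phi$ visibly extends to an isomorphism $\cQ_{\mathrm{gr}}(A')\to\cQ_{\mathrm{gr}}(A)$, and $A'$ embeds in $\cQ_{\mathrm{gr}}(A')$.

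Parts (2) and (3) are essentially the paper's argument in slightly different packaging. For (2), the paper shows $h, g \in \im\psi$ and uses $\gcd(h,g)=1$ to get $1\in\im\psi$; you write the same B\'ezout relation out via a chosen generating set $\{h,x\}$ of $M$. For (3), the paper first establishes $M^* = A + Ax^{-1}g$ and then reads off $(M^*M)_0 = hR + \sigma^{-1}(g)R$; you compute $M^*_{-n}M_n$ degree by degree to reach the same $I_0 = hR + \sigma^{-1}(g)R = dR$. Your claims that $M^*_{-n}M_n \subseteq \sigma^{-1}(g)R$ for $n\ge 2$ and $\subseteq hR$ for $n\le -1$ are correct but are asserted rather than derived; they require the degree-wise description of $M^*$ (equivalently, the paper's $M^* = A + Ax^{-1}g$), so you should either prove that description or record the needed containments $M^*_{-n}\subseteq \sigma^{-1}(f)\sigma^{-2}(f)\cdots\sigma^{1-n}(f)\sigma^{-n}(g)Rx^{-n}$ for $n\ge 1$.
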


\begin{proof}
(1) It is clear from the identification above that $A \subset M^*$.
Observe that $x\inv gx = \sigma\inv(g) \in A$ and
$x\inv g h = x\inv f = x\inv xy = y \in A$.
Hence, $x\inv g \in M^*$ and so $A+Ax\inv g \subset M^*$.

Note that $MM^* \subset B$. Clearly $1, x \in B$.
Since $zx=x\sigma\inv(z) \in xA$, then $z \in B$.
As $x\inv g \in M^*$, then
$x\inv \sigma(h) g = h x\inv g \in MM^* \subset B$.
Let $S$ be the subalgebra of $B \subset \QgrA$ generated by $X=x$, $Y=x\inv \sigma(h) g$, and $Z=z$.
We check that these generators satisfy the relations for $A'$.
It is clear that $XZ=\sigma(Z)X$ and $XY=f'$. A computation shows that
\begin{align*}
YZ &= x\inv \sigma(h)gz = \sigma\inv(z)x\inv\sigma(h)g = \sigma\inv(Z)Y, \\
YX &= x\inv \sigma(h)gx = h\sigma\inv(g) = \sigma\inv(f'),
\end{align*}
and so $S \iso A'$.

Given any $q \in M$, $Bq \subset M \subset S$.
By \cite[Corollary 4.4]{jordan2}, $S$ is a maximal order in its quotient ring and by \cite[Lemma 2]{VV}, it is also a 
\emph{graded} maximal order in its \emph{graded} quotient ring, whence $B=S\iso A'$.

(2) By the Dual Basis Lemma, $M$ is projective if and only if $1 \in MM^*$ \cite[Lemma 3.5.2]{mcconnell}.
By the proof of (1), $A+Ax\inv g \subset M^*$.
Thus, it is clear that $X,h(Z) \in \im\psi$.
We have $x(x\inv g) = g(Z) \in \im\psi$, so $1 = \gcd(g(Z),h(Z)) \in \im\psi$.

(3) We claim $M^*=A+Ax\inv g$. Let $p \in \QgrA$ such that $pM \subset A$. By \eqref{eq.qgrA}, we may write $p = \sum_{i=n}^m x^i p_i(z)$ with $p_i(z) \in \kk(z)$. Since $px \in A$, then $n\geq -1$ and $p_i(z) \in \kk[z]$ for all $i \geq 1$. 
On the other hand, suppose $p_{-1}(z) \neq 0$. Since $x^ip_i(z)h \in A$ for all $i\geq 0$, then $x^{-1}p_{-1}(z)h \in A$ and so $g \mid p_{-1}(z)$. The claim follows.

Thus, $\im\phi$ is generated by products of generators in $M^*$ and $M$. Since $A \subset M^*$, then $x,h \in M^*M$.
Furthermore, $y = (x\inv g)h \in M^*M$
and $\sigma\inv(g) = (x\inv g)x \in M^*M$, so $d \in \im\phi$.
Since $(xA)_0 = f R$ and $(yA)_0 = \sigma\inv(g)R$, therefore $(M^* M)_0$ is the ideal of $R$ generated by $h$ and $\sigma\inv(g)$. Hence, $1 \in \im\phi$ if and only if $d = 1$.
\end{proof}

We are now prepared to prove the translation principle for GWAs which is the main result of this section.
The result in (1) below should also be compared to a result of Shipman \cite{shipman} in the strongly graded Morita equivalence setting.

\begin{proposition}
\label{prop.collapse}
Let $A=R(\sigma,f)$. Choose a root $\beta_1$ of $f$, and consider its $\sigma_{\kk}$-orbit, $\cO$. Write $f=(z-\beta_1)\cdots(z-\beta_n) \cdot \tilde{f}$ where $\beta_1, \dots, \beta_n \in \cO$ and $\tilde{f}$ has no roots on $\cO$.
For each $1 \leq j \leq n$, 
write $\beta_j=\sigma_{\kk}^{-i_j}(\beta_1)$ for some $i_j \in \ZZ$.
After reordering, assume that $i_j \geq i_{j-1}$ for $1 < j \leq n$.
\begin{enumerate}
\item Suppose that either $k=1$ or $2 \leq k \leq n$ and $i_k-i_{k-1}>1$. Suppose further that $n_{\beta_k}=1$. Let $A' = R(\sigma, f')$ where $f'$ is the same as $f$ except that the factor $(z-\beta_k)$ of $f$ has been replaced by a factor of $(z-\sigma_\kk(\beta_k))$ in $f'$.
Then there is an equivalence of categories $\Gr A \equiv \Gr A'$.

\item Suppose that either $k=n$ or $1 \leq k \leq n-1$ and $i_{k+1}-i_k>1$. Suppose further that $n_{\beta_k}=1$.
Let $A' = R(\sigma, f')$ where $f'$ is the same as $f$ except that the factor $(z-\beta_k)$ of $f$ has been replaced by a factor of $(z-\sigma_\kk\inv(\beta_k))$ in $f'$.
Then there is an equivalence of categories $\Gr A \equiv \Gr A'$.

\item Suppose for some $2 \leq k \leq n$ that $i_k-i_{k-1}=1$, and that $n_{\beta_k}=1$. 
Let $A' = R(\sigma, f')$ where $f'$ is the same as $f$ except that the factor $(z-\beta_k)$ of $f$ has been replaced by a factor of $(z- \beta_{k-1})$ in $f'$.
Then there is an equivalence of categories $\Gr A/\scC_I \equiv \Gr A'$ where $I=(x,y,z-\beta_k)$. Therefore, $\QGr A \equiv \QGr A'$. 
\end{enumerate}
\end{proposition}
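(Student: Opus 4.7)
The plan is to apply Lemma~\ref{lem.mcontext} directly to $A$ with $h = z - \beta_k$ (a factor of $f$ of multiplicity one, by the assumption $n_{\beta_k} = 1$) in cases (1) and (3), and, for case (2), to apply it instead to $A'$ with $h' = z - \sigma_\kk\inv(\beta_k)$, noting that $\sigma(h') = z - \beta_k$ so the construction yields $B' \iso R(\sigma, \sigma(h')g) = R(\sigma, f) = A$. In each case \eqref{eq.morita} produces a graded Morita context between $A$ and $A'$, with $M$ projective by Lemma~\ref{lem.mcontext}(2). Whether the context promotes to a graded Morita equivalence reduces, by Lemma~\ref{lem.mcontext}(3), to showing that $d = \gcd(\sigma\inv(g), h)$ (or the analogue $d'$ for $A'$) equals $1$.

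Cases (1) and (2) then amount to index computations. On the orbit $\cO$, the roots of $\sigma\inv(g)$ are $\{\sigma_\kk\inv(\beta_j) : j \neq k\}$, while roots of $\sigma\inv(\tilde f)$ lie outside $\cO$ and so cannot equal $\beta_k$ or $\sigma_\kk\inv(\beta_k)$. In case (1), $d \neq 1$ would force $\sigma_\kk\inv(\beta_j) = \beta_k$ for some $j \neq k$, equivalently $i_j = i_k - 1$, which the hypothesis $k = 1$ or $i_k - i_{k-1} > 1$ rules out. In case (2), the analogous hypothesis $k = n$ or $i_{k+1} - i_k > 1$ is what is needed to ensure that $h'$ has multiplicity one in $f'$ (it precludes $i_j = i_k + 1$ for $j \neq k$), and then $d' = 1$ follows automatically from $n_{\beta_k} = 1$. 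In both cases $M$ becomes a graded progenerator and the Morita context yields $\Gr A \equiv \Gr A'$.

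The main obstacle is case (3), where the congruence $i_k - i_{k-1} = 1$ forces $\beta_k = \sigma_\kk\inv(\beta_{k-1})$, so $h \mid \sigma\inv(g)$, giving $d = h \neq 1$. Thus $M$ is projective but not a generator, and \eqref{eq.morita} remains only a genuine graded Morita context; the graded Kato--M\"{u}ller theorem of Iglesias and N\v{a}st\v{a}sescu then supplies the equivalence $\Gr A / \scC_I \equiv \Gr A' / \scC_J$ for the trace ideals $I = \im\phi$ and $J = \im\psi$. Building on the proof of Lemma~\ref{lem.mcontext}, I would check that $I$ contains $x$, $y$, $h$, and $\sigma\inv(g)$ and that $I_0 = (h, \sigma\inv(g))R = hR$ (since $h \mid \sigma\inv(g)$), and combine this with the GWA relations to identify $I$ with the two-sided ideal $(x, y, z - \beta_k)$. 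To upgrade to $\QGr A \equiv \QGr A'$, I would then verify $\scC_I \subseteq \Tors A$ and $\scC_J \subseteq \Tors A'$: by Lemma~\ref{lem.simples}, any simple in $\scP_I = \{N \in \Gr A : NI = 0\}$ must be annihilated by $x$, $y$, and $z - \beta_k$, forcing it (up to shift) to be the interval-simple associated to the length-$1$ gap between $\beta_{k-1}$ and $\beta_k$, which is one-dimensional precisely because $i_k - i_{k-1} = 1$. Since the functors $\uHom_A(M, -)$ and $\uHom_{A'}(M^*, -)$ commute with shifts and preserve finite-dimensionality, they send $\Tors A / \scC_I$ onto $\Tors A' / \scC_J$, and a further Serre quotient yields $\QGr A \equiv \QGr A'$.
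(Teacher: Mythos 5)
Your proposal follows the same strategy as the paper: apply Lemma~\ref{lem.mcontext} with $h = z - \beta_k$ to build a graded Morita context, check whether $d = \gcd(\sigma\inv(g), h) = 1$ so that $M$ is a progenerator for (1)--(2), and in (3) invoke the graded Kato--M\"{u}ller theorem together with an analysis of $\scC_I$ to descend to $\QGr$. Your index computations showing $d = 1$ in (1) and $d' = 1$ in (2) are correct, and in (2) your choice to apply Lemma~\ref{lem.mcontext} directly to $A'$ with $h' = z - \sigma_\kk\inv(\beta_k)$ is just an unwound version of the paper's one-line "apply (1) to $A'$." For (3), your identification of $I = (x, y, z - \beta_k)$ and the observation that the only simples killed by $I$ are shifts of the one-dimensional interval-simple $M^{(i_{k-1}, i_k]}$ are both right.

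The one place you make more work for yourself than necessary is in treating $\scC_J$ symmetrically with $\scC_I$ and proposing to show $\scC_J \subseteq \Tors A'$. Since $M$ is projective (Lemma~\ref{lem.mcontext}(2)), the Dual Basis Lemma gives $1 \in \im\psi = J$, so $J = A'$ and hence $\scP_J = 0 = \scC_J$; the Kato--M\"{u}ller equivalence is therefore $\Gr A / \scC_I \equiv \Gr A'$ outright, and only one side needs to be compared against torsion. Relatedly, the paper gets $\scC_I \subseteq \Tors A$ more directly than your plan via simples: since $I$ is a trace ideal it is idempotent, so $\scC_I = \scP_I$ by \cite[Remark 2.5]{IN}, and then $A/I \iso R/(z - \beta_k)R = \kk$ is finite-dimensional, so any $N$ with $NI = 0$ has all cyclic submodules finite-dimensional and hence is torsion. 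Your route through classifying annihilated simples works but quietly needs the same idempotence fact to pass from $\scP_I$ to $\scC_I$, plus an argument that modules over the finite-dimensional ring $A/I$ are torsion, which is in the end the paper's argument.
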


\begin{proof}
(1) Set $h(z)=z-\beta_k$. Then by hypothesis, $\gcd(\sigma\inv(g),h)=1$ and so Lemma~\ref{lem.mcontext} (2) and (3) imply that $M$ is a progenerator for $A$. Thus the Morita context in (1) is an equivalence.

(2) By the hypotheses, $\sigma_\kk\inv(\beta_k)$ has multiplicity one as a root of $f'$ and so we may apply (1) to obtain $\Gr A' \equiv \Gr A$.

(3) Again set $h(z)=z-\beta_k$ so that $\gcd(\sigma\inv(g),h)=h$ and $\gcd(g,h)=1$. Thus, by Lemma \ref{lem.mcontext} (3), $I=\im\phi=xA+yA+hA$ is the trace ideal corresponding to $\phi$. By Lemma \ref{lem.mcontext} (2), $1 \in J=\im\psi$, and hence $\scC_J=0$.
Then $\Gr A/\scC_I \equiv \Gr A'$ by the graded Kato--M{\"u}ller Theorem \cite[Theorem 4.2]{IN}.
Since $I$ is a trace ideal, it is idempotent, so $\scC_I=\scP_I$ by \cite[Remark 2.5]{IN}. To arrive at the intended equivalence, it remains to prove that $\scC_I$ contains only torsion modules.

We apply \cite[Theorem 4.2]{IN} and our description of the finite-dimensional simple modules.
It is clear that $I$ is the annihilator of $M^{(i_{k-1},i_k]}$ (and its shifts) but that $\ann_N I = \{ n \in N : nx=0 \text{ for all } x \in I\} = 0$ for any other simple $A$-module $N$, including those corresponding to nondegenerate orbits. 



As $M$ is a finitely generated graded right $A$-module, there is a graded surjection from the graded free module
$\bigoplus_{i=1}^n A\s{d_i} \to M$ which induces a graded injection \[\uHom_A(M,N) \to \uHom_A\left(\bigoplus_{i=1}^n A\s{d_i},N\right) = \bigoplus_{i=1}^n N\s{-d_i}.\]
Thus, $\uHom_A(M,N)$ is isomorphic to a subspace of $\bigoplus_{i=1}^n N\s{-d_i}$, which is finite-dimensional if and only if $N$ is finite-dimensional.
Therefore, $\uHom_A(M,-)$ maps finite-dimensional modules to finite-dimensional modules.
Since it also preserves submodule inclusion, $\uHom_A(M,-)$ also maps torsion modules to torsion modules.

Finally, let $M \in \scC_I$ and let $m \in M$. Since $mI=0$ and $A/I$ is finite-dimensional, then $\dim_\kk(Am)<\infty$. It follows that $M \in \Tors A$, so $\scC_I$ is a subcategory of $\Tors A$, whence the equivalence $\Gr A/\scC_I \equiv \Gr A'$ induces an equivalence $\QGr A \equiv \QGr A'$.
\end{proof}

The following example illustrates our methods from Proposition~\ref{prop.collapse}.

\begin{example}
Let $A=\kk[z,z\inv](\sigma,f)$ where $\sigma(z)=pz$ for some nonroot of unity $p \in \kk^\times$ and $f=(z-1)(z-p)^2(z-p^2)$.
Let $A'=\kk[z,z\inv](\sigma,f')$ with $\sigma$ as before but $f'=(z-1)(z-p)(z-p^2)(z-p^3)$. Applying Proposition~\ref{prop.collapse} (3), we can replace $f'$ with $f'=(z-1)(z-p)^2(z-p^3)$ up to equivalence of the quotient category $\QGr A$. Then applying Proposition~\ref{prop.collapse} (1) to the last factor we have that $\QGr A \equiv \QGr A'$. 
On the other hand, let $A''=\kk[z,z\inv](\sigma,f'')$ with $\sigma$ as before but $f''=(z-1)^4$. We can collapse $f'$ to $f''$ in the following way, where the labels on the arrows indicate the applicable part of Proposition~\ref{prop.collapse}:
\begin{align*}
f'  &\overset{(3)}{\rightsquigarrow} (z-1)^2(z-p^2)(z-p^3)
    \overset{(1)}{\rightsquigarrow} (z-1)^2(z-p)(z-p^3)
    \rightsquigarrow \cdots \\
    \cdots & \overset{(3)}{\rightsquigarrow} (z-1)^3(z-p^3)
    \overset{(1)}{\rightsquigarrow} (z-1)^3(z-p)
    \overset{(3)}{\rightsquigarrow} (z-1)^4 = f''.
\end{align*}
Hence, $\QGr A' \equiv \Gr A''$ and, transitively,
$\QGr A \equiv \Gr A''$.
\end{example}

Translating a root of multiplicity one was accomplished in Proposition \ref{prop.collapse}. The general case of translating a multiple root is 
our main result below.

\begin{theorem} \label{thm.translation} Let $A = R(\sigma, f)$ be a GWA satisfying Hypothesis~\ref{hypa}. Let $A' = R(\sigma, f')$ where $f'$ is obtained from $f$ by replacing any irreducible factor $(z-\beta)$ of $f$ by $\sigma(z-\beta)$ in $f'$. Then $\QGr A \equiv \QGr A'$.
\end{theorem}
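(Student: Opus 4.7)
The plan is to induct on the multiplicity $m = n_\beta$ of $\beta$ as a root of $f$. For the base case $m=1$, I would apply Proposition~\ref{prop.collapse} directly: if $\gamma := \sigma_\kk(\beta)$ is not a root of $f$, part~(1) gives $\QGr A \equiv \QGr A'$; if $\gamma$ is a root of $f$, part~(3) applies. So the base case reduces to the single-factor propositions already established.

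For the inductive step with $m \geq 2$, write $f = (z-\beta)^m g$ with $(z-\beta) \nmid g$. The strategy is an \emph{expand--shift--collapse} argument along the $\sigma_\kk$-orbit $\cO$ of $\beta$. I would first reduce to the case in which the forward neighbor $\sigma_\kk\inv(\beta)$ is not a root of $g$: since $\cO$ is infinite (because $\sigma$ has infinite order under Hypothesis~\ref{hypa}) and $g$ has only finitely many roots, any congruent roots of $g$ occupying $\sigma_\kk\inv(\beta), \ldots, \sigma_\kk^{-(k-1)}(\beta)$ can be slid further out along $\cO$ by combining Proposition~\ref{prop.collapse}(2) with the reverse (``split'') direction of (3). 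With $\sigma_\kk\inv(\beta)$ vacated, the argument is:
\begin{enumerate}
\item[(i)] Split one factor of $(z-\beta)$ off onto $\sigma_\kk\inv(\beta)$ via the reverse of Proposition~\ref{prop.collapse}(3), producing $\tilde{f}_1 = (z-\beta)^{m-1}(z-\sigma_\kk\inv(\beta))g$ with $\QGr A \equiv \QGr R(\sigma, \tilde{f}_1)$.
\item[(ii)] Apply the inductive hypothesis to $\tilde{f}_1$ (in which the factor $(z-\beta)$ now has multiplicity $m-1$) to replace one remaining $(z-\beta)$ by $(z-\gamma)$, obtaining $\tilde{f}_2 = (z-\gamma)(z-\beta)^{m-2}(z-\sigma_\kk\inv(\beta))g$.
\item[(iii)] Collapse the simple root $(z-\sigma_\kk\inv(\beta))$ back into the $\beta$-cluster via Proposition~\ref{prop.collapse}(3) when $m \geq 3$; when $m=2$ the cluster at $\beta$ has been depleted, and the simple root slides across the empty slot at $\beta$ via Proposition~\ref{prop.collapse}(1) (the gap condition $i_k - i_{k-1} = 2 > 1$ is satisfied).
\end{enumerate}
Finally, any auxiliary slides used in the initial reduction must be reversed to restore the original profile of $g$ on $\cO$, so that the net transformation is exactly the single-factor replacement $f \mapsto f' = (z-\gamma)(z-\beta)^{m-1}g$.

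The main obstacle is the asymmetry of the moves in Proposition~\ref{prop.collapse}: the equivalences in (3) permit splitting only in the $\sigma_\kk\inv$-direction and collapsing only in the $\sigma_\kk$-direction. Consequently, clearing a multi-factor forward cluster (the subcase $k \geq 2$) requires a nested sub-induction on the cluster's length, together with careful bookkeeping to guarantee that every auxiliary split and slide is ultimately reversed, leaving only the desired single-factor translation on the profile.
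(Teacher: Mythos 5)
Your overall strategy — an expand--shift--collapse argument built from Proposition~\ref{prop.collapse} — is the right one, and your base case $n_\beta = 1$ is correct. But the inductive organization you choose creates bookkeeping obligations that the proposal does not discharge, and which the paper's proof is designed to avoid.

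The paper does not try to transform $f$ into a ``vacated'' version and back. Instead, it directly \emph{writes down} an intermediate polynomial $\tilde f$ in which \emph{all} factors of $f_1$ at or beyond $\beta$ on the orbit are replaced by a single run of $t = n_\ell + \cdots + n_k$ distinct simple roots at consecutive positions starting at $\beta$. From $\tilde A = R(\sigma, \tilde f)$ one reaches $A$ by collapsing the run onto $\alpha_\ell, \dots, \alpha_k$ using parts (1) and (3), and one reaches $A'$ by first moving $\beta_1 = \beta$ to $\sigma_\kk(\beta)$ and then collapsing the rest. Since $\tilde A$ is the hub, no slide ever has to be undone — one only needs each forward chain of moves to satisfy the gap/simplicity hypotheses, which is checked once. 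Your induction on $n_\beta$, by contrast, requires: (a) a preliminary ``slide out'' to vacate $\sigma_\kk^{-1}(\beta)$, which, when roots of $g$ there have multiplicity greater than one, forces a nested sub-induction that is only gestured at (``combining (2) with the reverse of (3)'' and ``careful bookkeeping''); and (b) a final ``slide back'' step that must be verified to be legal after the profile near $\beta$ has changed (multiplicities at $\beta$ and $\sigma_\kk(\beta)$ have shifted). You note both issues, but asserting they can be handled is not the same as handling them, and (a) in particular is of essentially the same difficulty as the theorem itself — it is a sequence of single-factor translations along the orbit. As written, the reduction step is a genuine gap.

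If you want to keep the induction-on-multiplicity structure, the fix is to make the reduction step a lemma proved first and independently: given any polynomial $p$ supported on $\cO$ at positions $\geq 0$, there is a sequence of Proposition~\ref{prop.collapse} moves from $p$ to the ``fully expanded'' string of $\deg p$ simple roots at positions $0, 1, \dots, \deg p - 1$. That is essentially the $\tilde f$ construction, at which point you may as well adopt the paper's hub-and-spoke structure and dispense with the induction on $n_\beta$ entirely.
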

\begin{proof}
Let $\cO \subset \kk$ be the set of roots of $f$ on the $\sigma$-orbit of $\beta$. Let $f_1 = \prod_{\alpha \in \cO} (z-\alpha)$ and let $f_2=ff_1\inv$. Suppose $f_1$ has degree $n$. Write
\[ f_1 = (z-\alpha_1)^{n_1} \cdots (z-\alpha_k)^{n_k}\]
where $\sum n_i = n$, $\alpha_i = \sigma_{\kk}^{-j_i}(\alpha_1)$ for $i>1$, and $j_k > j_{k-1} > \cdots > j_2>0$.
Assume $\alpha_\ell = \beta$ and write
\[ \tilde{f} =  (z-\alpha_1)^{n_1} \cdots (z-\alpha_{\ell-1})^{n_{\ell-1}}(z-\beta_1)(z-\beta_2) \cdots (z-\beta_t)\]
where $t=n-(n_1 + \cdots + n_{\ell-1})$, $\beta_1=\beta$, and $\beta_i = \sigma_{\kk}^{-i}(\beta_1)$ for $i>1$.
Let $\tilde{A}=R(\sigma,\tilde{f}f_2)$. 
We claim that both $\QGr A$ and $\QGr A'$ are equivalent to $\QGr \tilde{A}$.

The roots $\beta_i$ all have multiplicity one in $\tilde{f}$ and so we can translate them, one at time, using Proposition \ref{prop.collapse}. 
In what follows, all replacements are up to equivalence in the quotient category $\QGr \tilde{A}$.
First, we use Proposition \ref{prop.collapse} (3) to replace $(z-\beta_1)(z-\beta_2)$ with $(z-\beta_1)^2$. Suppose we have replaced $(z-\beta_1)\cdots(z-\beta_s)$ with $(z-\beta_1)^s$ for some $s < n_\ell$. Applying Proposition \ref{prop.collapse} (1) $s-2$ times to the factor $(z-\beta_{s+1})$ replaces it with $(z-\beta_2)$. We can then apply Proposition \ref{prop.collapse} (3) again to replace $(z-\beta_1)^s(z-\beta_2)$ with $(z-\beta_1)^{s+1}$.
Hence, by induction, we have
\[ \tilde{f} \rightsquigarrow (z-\alpha_1)^{n_1} \cdots (z-\alpha_{\ell})^{n_{\ell}}(z-\beta_{n_{\ell}+1})\cdots (z-\beta_t).\]
We apply Proposition \ref{prop.collapse} (1) to replace $(z-\beta_{n_{\ell}+1})$ by $(z-\alpha_{\ell+1})$. Applying Proposition \ref{prop.collapse} (1) and (3) alternatively, we may then collapse the roots  $\beta_{n_{\ell}+2},\hdots,\beta_{n_{\ell}+n_{\ell+1}}$ to $\alpha_{\ell+1}$. Continuing in this way we obtain $\QGr \tilde{A} \equiv \QGr A$.

On the other hand, we can shift or collapse $(z-\beta_1)$ to $\sigma(z-\beta_1)$ and then repeat as above to shift the remaining $\beta_i$ to their corresponding $\alpha_j$. In this way we obtain $\QGr \tilde{A} \equiv \QGr A'$. The result follows.
\end{proof}

Theorem \ref{thm.translation} says, essentially, that the $\operatorname{QGrMod}$-equivalence class of $R(\sigma,f)$ depends only on the number of roots on the same $\sigma$-orbit.

\begin{corollary}
\label{cor.qgr}
Suppose $A = R(\sigma,f)$ is a GWA satisfying Hypothesis~\ref{hypa}. Then $\QGr A \equiv \Gr A'$ for some simple GWA $A' = R(\sigma,f')$ satisfying Hypothesis~\ref{hyp}. 
\end{corollary}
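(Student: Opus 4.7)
The plan is to first apply Theorem~\ref{thm.translation} repeatedly to consolidate all roots of $f$ lying on the same $\sigma_\kk$-orbit onto a single representative, producing a polynomial $f'$ whose roots all lie on distinct orbits; and then to use the simplicity of $A' = R(\sigma, f')$ to identify $\QGr A'$ with $\Gr A'$.

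First, I would observe that Theorem~\ref{thm.translation} permits replacing any irreducible factor $(z-\beta)$ of $f$ by $(z-\sigma_\kk(\beta))$ without altering the $\QGr$-equivalence class, and, by interchanging the roles of $A$ and $A'$ in that theorem, also permits the inverse replacement $(z-\beta) \leftrightarrow (z-\sigma_\kk\inv(\beta))$. Iterating such one-step moves translates any factor to any desired position on its $\sigma_\kk$-orbit. For each $\sigma_\kk$-orbit $\cO$ that meets the root set $\Z$, I would fix a representative $\m_\cO \in \cO \cap \Z$ and successively translate every factor of $f$ corresponding to a root on $\cO$ into the single factor $(z-\m_\cO)$, accumulating multiplicity as necessary. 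Since $f$ has only finitely many irreducible factors, this procedure terminates after finitely many applications of Theorem~\ref{thm.translation} and yields a monic polynomial $f' \in \kk[z]$ of the same degree as $f$ such that no two distinct roots of $f'$ lie on the same $\sigma_\kk$-orbit. Setting $A' := R(\sigma, f')$, we then have that $A'$ satisfies Hypothesis~\ref{hyp} and $\QGr A \equiv \QGr A'$.

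Next, I would verify that $\Tors A' = 0$. Suppose, for contradiction, that $N$ is a nonzero finite-dimensional graded right $A'$-module. Then $\ann_{A'}(N)$ is a two-sided ideal of $A'$, so by the simplicity of $A'$ it equals either $0$ or $A'$. If $\ann_{A'}(N) = A'$, then $N = N \cdot 1 = 0$, a contradiction. If $\ann_{A'}(N) = 0$, then $A'$ embeds as a $\kk$-algebra into the finite-dimensional algebra $\End_\kk(N)$, contradicting the fact that $A'$ is a domain of GK dimension~$2$. Hence no nonzero graded $A'$-module admits any nonzero finite-dimensional graded submodule, so $\tau(M) = 0$ for every $M \in \Gr A'$, whence $\Tors A' = 0$ and $\QGr A' = \Gr A'$. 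Combining with the first step gives the required equivalence $\QGr A \equiv \Gr A'$ with $A'$ a simple GWA satisfying Hypothesis~\ref{hyp}.

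The main obstacle is the organizational bookkeeping in the first step: I must ensure that each intermediate polynomial continues to satisfy Hypothesis~\ref{hypa} so that Theorem~\ref{thm.translation} applies at every stage. This is automatic, since translating a single factor along its orbit preserves monicity and, in the case $R = \kk[z,z\inv]$, cannot introduce $0$ as a root (the $\sigma_\kk$-orbit of a nonzero element under $\sigma_\kk(\lambda) = \xi\inv\lambda$ does not contain $0$). The second step is a standard simplicity argument and presents no difficulty.
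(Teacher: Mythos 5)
Your proposal is correct and takes essentially the same route as the paper: first iterate Theorem~\ref{thm.translation} (forward and backward along each orbit, as you note, by symmetry of the equivalence relation) to consolidate all roots on each $\sigma_\kk$-orbit onto one representative, yielding a simple GWA $A'$ with $\QGr A \equiv \QGr A'$; then observe that simplicity of the infinite-dimensional $A'$ kills all finite-dimensional graded modules, so $\Tors A' = 0$ and $\QGr A' = \Gr A'$. The paper states the second step in one sentence; your annihilator/embedding argument is the standard justification of exactly that sentence.
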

\begin{proof}
Repeatedly apply Theorem~\ref{thm.translation} to each distinct $\sigma$-orbit of the irreducible factors of $f$ until the resulting polynomial $f'$ has no distinct irreducible factors on the same $\sigma$-orbit.
Then $f'$ has no congruent roots so $A'$ is simple and $\QGr A \equiv \QGr A'$. Since $A'$ is simple, $\Gr A'$ has no finite-dimensional modules, and hence $\QGr A \equiv \QGr A' = \Gr A'$.
\end{proof}

\section{Graded modules over simple GWAs}
\label{sec.grmods}

Henceforth we assume Hypothesis~\ref{hyp} unless otherwise stated.
In \cite[Section 3]{wonpic}, the third-named author studied the finite-length modules and projective modules in $\gr R(\sigma, f)$ when $R = \kk[z]$ and $f \in \kk[z]$ is quadratic. Here, we prove similar results for the simple GWAs of interest, thereby generalizing these results in two directions: first, by considering both base rings $R = \kk[z]$ and $R = \kk[z,z\inv]$ and second, by considering polynomials of arbitrary degree with no two distinct roots on the same $\sigma_{\kk}$-orbit.
In contrast with the treatment in \cite{wonpic}, rather than trying to understand all extensions of simple graded modules, we will focus only on certain indecomposable modules, which we will need in Section~\ref{sec:pic}.

\subsection{Finite-length indecomposable modules}
\label{sec:simple}

Recall the description of the simple graded $A$-modules from Lemma~\ref{lem.simples}.
Since we are assuming Hypothesis~\ref{hyp}, there are no degenerate congruent orbits, so the third case in Lemma~\ref{lem.simples} does not occur. Hence, $\Gr A$ has no modules of finite $\kk$-dimension, so throughout this section, $\Gr A = \QGr A$.

By \cite[Lemma 3.6]{bavgldim}, both $M\ma$ and $M\pa$ have projective dimension $1$ if $n_{\m } = 1$ and infinite projective dimension if $n_\m > 1$. The following lemma shows that there are  indecomposable modules of projective dimension $1$ and length $n_\m $whose composition factors are all $M \ma$ or all $M\pa$.

\begin{lemma} \label{lem.indec}For each $\m \in \Z$ and each $j = 1, \dots, n_{\m }$, there is an indecomposable graded right $A$-module of length $j$
\[M\maj = \frac{A}{(z-\m )^{j}A + xA}
\]
whose composition factors are $j$ copies of $M\ma$. This is the unique indecomposable module of length $j$ whose composition factors are $M\ma$. Similarly, there is a unique indecomposable graded right $A$-module of length $j$
\[M\paj = \frac{A}{\sigma^{-1}\left((z-\m )^{j}\right)A + yA} \s{-1}
\]
whose composition factors are $j$ copies of $M\pa$. Further, both $\mcMma$ and $\mcMpa$ have projective dimension $1$.
\end{lemma}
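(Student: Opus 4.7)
Write $f = (z-\m)^{n_\m} g$ with $\gcd(g, z-\m) = 1$. The plan is to prove the assertions for $M\maj$ by direct computation of its graded components, and then to derive uniqueness from the structure of its degree-zero part. Using $(xA)_i = y^{-i}\sigma^{-i}(f)R$ and $((z-\m)^j A)_i = y^{-i}(z-\sigma_\kk^{-i}(\m))^j R$ for $i \leq 0$, together with the fact that Hypothesis~\ref{hyp} forces $(z-\sigma_\kk^{-i}(\m))$ to divide $\sigma^{-i}(f)$ only when $i = 0$, one finds $(M\maj)_i \cong R/(z-\sigma_\kk^{-i}(\m))^j R$ for $i \leq 0$ and $(M\maj)_i = 0$ otherwise. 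The descending chain
\[
M\maj \supsetneq (z-\m) \cdot M\maj \supsetneq \cdots \supsetneq (z-\m)^j \cdot M\maj = 0,
\]
formed using the left $R$-action on a graded right $A$-module from Section~\ref{sec.gwadef}, has successive quotients that are one-dimensional in every nonpositive degree and annihilated by $x$ and by $(z-\m)$; this identifies each such quotient with $M\ma$, giving a composition series. For indecomposability, a degree-zero endomorphism of $M\maj$ is determined by the image of the cyclic generator $\bar 1 \in (M\maj)_0$, and any $\bar p \in (M\maj)_0 \cong R/(z-\m)^j R$ extends to an endomorphism because both defining relations $\bar 1 \cdot (z-\m)^j = 0$ and $\bar 1 \cdot x = 0$ are automatic (the latter because $(M\maj)_1 = 0$). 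Hence $\End_A(M\maj) \cong R/(z-\m)^j R$ is local.

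For uniqueness, let $N$ be an indecomposable graded $A$-module of length $j$ with every composition factor isomorphic to $M\ma$. Because $y$ acts as a $\kk$-linear isomorphism on each nonpositive-degree component of $M\ma$, a snake-lemma induction along a composition series of $N$ yields that $y \colon N_i \to N_{i-1}$ is a $\kk$-linear isomorphism for every $i \leq 0$, while $N_i = 0$ for $i \geq 1$. Consequently $N$ is cyclic over $A$, generated by $N_0$, which is a $j$-dimensional $R$-module annihilated by $(z-\m)^j$. If $N_0 = P \oplus Q$ as an $R$-module, then $(PA)_{-k} = P \cdot y^k$ and $(QA)_{-k} = Q \cdot y^k$, and injectivity of $y^k \colon N_0 \to N_{-k}$ shows that these intersect trivially and sum to $N_{-k}$; this gives an $A$-module splitting $N = PA \oplus QA$, contradicting indecomposability. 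Therefore $N_0 \cong R/(z-\m)^j R$, and the natural surjection $M\maj \twoheadrightarrow N$ sending the generator of $M\maj$ to a generator of $N_0$ is an isomorphism by length comparison. The existence, indecomposability, and uniqueness of $M\paj$ are handled by an entirely symmetric argument, with the roles of $x$ and $y$ reversed (the shift $\s{-1}$ is immaterial since shifts are autoequivalences of $\gr A$).

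For the projective dimensions, consider the short exact sequence $0 \to I \to A \to \mcMma \to 0$ with $I = (z-\m)^{n_\m} A + xA$. The plan is to adapt the Dual Basis Lemma argument of Lemma~\ref{lem.mcontext}(2), now with $h = (z-\m)^{n_\m}$ in place of the linear factor considered there: the same computations place $1$ and $x^{-1} g$ in $I^* \subset \QgrA$, so both $h$ and $g$ lie in the trace ideal $I I^*$, and the concluding step $1 = \gcd(g, h) \in I I^*$ still holds because $h$ absorbs every $(z-\m)$-factor of $f$, leaving $g$ coprime to $h$. Thus $I$ is projective and $\pd \mcMma \leq 1$; equality holds since $\mcMma$ has nonzero $A$-torsion and so cannot be projective over the domain $A$. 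The claim for $\mcMpa$ follows symmetrically from the projectivity of $J = \sigma^{-1}((z-\m)^{n_\m}) A + yA$, using the element $y^{-1} \sigma^{-1}(g) \in J^*$. The main obstacle will be confirming that this Dual Basis argument really extends beyond the simple-root case of Lemma~\ref{lem.mcontext}; but on reexamination, the multiplicity-one hypothesis there enters only through the coprimality $\gcd(g, h) = 1$, which still holds in our setting.
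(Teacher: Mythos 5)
Your proof is correct but takes a genuinely different route from the paper's at nearly every step, and the comparison is instructive. For the composition series, you use the explicit filtration $(z-\m)^\ell M\maj$ built from the $(R,A)$-bimodule structure, whereas the paper argues by induction on $j$ and identifies the kernel of $M\maj \to M_\m^{-(j-1)}$ with $M\ma$ via left multiplication by $(z-\m)^{j-1}$. For indecomposability you compute $\End_{\gr A}(M\maj) \cong R/(z-\m)^j R$ and observe that it is local; the paper instead shows $\Hom_{\Gr A}(M\ma, M\maj) = \kk$, so that $M\maj$ has a unique copy of its socle. For uniqueness you analyze an arbitrary indecomposable $N$ of length $j$ directly, using the $y$-action and the structure theory of $R$-modules to force $N_0 \cong R/(z-\m)^j R$; the paper instead establishes $\Ext^1_{\Gr A}(M_\m^{-(j-1)}, M\ma) = \kk$ and leaves the remainder of the uniqueness argument implicit, so your argument is arguably the cleaner one. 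For the projective-dimension claim, you apply the Dual Basis Lemma directly to $I = (z-\m)^{n_\m}A + xA$ and $J = \sigma^{-1}((z-\m)^{n_\m})A + yA$ separately, while the paper packages both into a single split short exact sequence $0 \to I \to A \oplus A\s{-1} \to J\s{-1} \to 0$; both arguments hinge on the same coprimality $\gcd\bigl((z-\m)^{n_\m}, f/(z-\m)^{n_\m}\bigr) = 1$, and your reading of Lemma~\ref{lem.mcontext}(2) is accurate: the multiplicity-one hypothesis there enters only through that $\gcd$.

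Two small imprecisions, neither of which undermines the argument. First, the clause ``Hypothesis~\ref{hyp} forces $(z - \sigma_\kk^{-i}(\m))$ to divide $\sigma^{-i}(f)$ only when $i = 0$'' is false as written (since $\m$ is a root of $f$, $(z-\sigma_\kk^{-i}(\m))$ always divides $\sigma^{-i}(f)$) and is also unnecessary: the identification $(M\maj)_i \cong R/\sigma^{-i}\bigl((z-\m)^j\bigr)R$ for $i \leq 0$ follows simply from $\gcd\bigl((z-\m)^j, f\bigr) = (z-\m)^j$, which holds because $j \leq n_\m$; no congruence hypothesis is needed at this point. Second, the successive quotients of the filtration are not literally annihilated by $x$ -- indeed $x$ acts as an isomorphism from degree $-k$ to degree $-(k-1)$ for $k \geq 1$ on $M\ma$ itself. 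What is true, and what your argument needs, is that each quotient is cyclic on a degree-zero generator (the image of $(z-\m)^\ell \cdot \bar 1$) that is killed by $x$ and by $z-\m$, hence each quotient is a nonzero image of the simple $M\ma$ and so equals $M\ma$. Rephrasing these two sentences would make the argument airtight.
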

\begin{proof} We prove the statements for $M\ma$. The proofs of the statements for $M\pa$ are similar.

We proceed by induction on $j$. For the base case, $M_{\m }^{-1} = M\ma$. For $2 \leq j \leq n_{\m }$, consider the natural projection $M\maj \sra M_{\m }^{-(j-1)}$ whose kernel is given by
\[ K = \frac{(z-\m )^{j-1}A + xA}{(z-\m )^{j} A + xA} \cong \frac{(z-\m )^{j-1}A}{(z-\m )^{j-1} A \cap \left[(z-\m )^{j} A + xA\right]}.
\]
There is an isomorphism $M\ma \cong K$ given by left multiplication by $(z-\m )^{j-1}$. Hence, $M\maj$ is a length $j$ module whose composition factors are all isomorphic to $M\ma$.

To show that $M\maj$ is indecomposable, it suffices to show that $M\maj$ has a unique submodule isomorphic to $M\ma$.
Any homomorphism $\varphi: M\ma \sra M\maj$ is determined by $\varphi(1) \in (M\maj)_0$. Since, in $M\maj$, $(z-\m  )^j = 0$, every element of $(M\maj)_0$ can be written as $\sum_{\ell = 0}^{j-1} \beta_{\ell} (z-\m )^{\ell}$ for some $\beta_{\ell} \in \kk$. For $\varphi$ to be well-defined, we must have $\varphi(1) = \beta (z-\m )^{j-1}$ for some $\beta \in \kk$, and so $\Hom_{\Gr A}(M\ma, M\maj) = \kk$.

For uniqueness, we show that $\Ext^1_{\Gr A}(M_{\m}^{-(j-1)}, M\ma) = \kk$. We introduce temporary notation and let $I = (z-\m)^{j-1}A + xA$ be the ideal defining $M_{\m}^{-(j-1)}$. Applying the functor $\Hom_{\Gr A}(-, M\ma)$ to the exact sequence
\[ 0 \to I \to A \to M_{\m}^{-(j-1)} \to 0
\]
yields the exact sequence
\begin{align*} 0 &\to \Hom_{\Gr A}(M_{\m}^{-(j-1)}, M\ma) \to \Hom_{\Gr A}(A, M\ma) \\
&\to \Hom_{\Gr A}(I, M\ma) \to \Ext^1_{\Gr A}(M_{\m}^{-(j-1)},M\ma) \to 0.
\end{align*}
Since $M\ma$ is one-dimensional in degree 0, and there is a natural projection $M_{\m}^{-(j-1)} \to M\ma$, the first two terms in this sequence are both one-dimensional over $\kk$. Since $M\maj$ is a nontrivial extension of $M\ma$ by $M\maj$, the last term is at least one-dimensional over $\kk$. Now any morphism $\phi: I \to M\ma$ is determined by $\phi((z-\m)^{j-1})$ and $\phi(x)$. Since $M\ma$ is zero in all positive degree, $\phi(x) = 0$ and so $\phi$ is determined by the image of $(z-\m)^{j-1}$. Since $M\ma$ is one-dimensional in degree 0, we have $\Hom_{\Gr A}(I, M\ma)$ is at most one-dimensional over $\kk$. By considering $\kk$-dimension, this implies that every term in the exact sequence is one-dimensional.

Finally, to see that $\mcMma$ has projective dimension one, we use a technique similar to \cite[Theorem 5]{bav}. Observe that there is an exact sequence
\begin{align*}
0 \ra (z-\m )^{n_{\m }} A + xA  \overset{\phi}{\ra} A \oplus A\s{-1} \overset{\psi}{\ra} \left[\sigma^{-1}\left((z- \m )^{n_{\m }}\right)A + yA \right] \s{-1}\ra 0
\end{align*}
where $\phi(w) = (w, [\sigma^{-1}\left((z - \m )^{n_{\m }}\right) ]\inv y w)$ and $\psi(u,v) = yu - \sigma^{-1}\left((z-\m )^{n_{\m }}\right) v$. Then $\mcMma$ and $\mcMpa$ will have projective dimension one if this exact sequence splits; i.e., if there is a splitting map $\nu: A \oplus A \sra (z-\m )^{n_{\m }} A + xA$ such that $\nu \circ \phi= \Id$. This is equivalent to the existence of $a, b \in (z-\m )^{n_{\m }} A + xA$ so that $ 1 = a + b [\sigma^{-1}\left((z - \m )^{n_{\m }}\right) ]\inv y$ (if so, we can define the splitting map $\nu(u,v) = au + bv$). But this is true since $\gcd \left( (z-\m )^{n_{\m }}, f/(z-\m )^{n_{\m }} \right) = 1$ in $R$.
\end{proof}

The indecomposable modules $\mcMma$ and $\mcMpa$ will play an important role in constructing autoequivalences of $\gr A$ in section~\ref{sec:pic}.
The proof of the following lemma is the same as that of \cite[Lemma 3.4]{wonpic}.

\begin{lemma} \label{lem.support} Let $n \in \ZZ$. Then for each $\m  \in \Z $ and each $j = 1 ,\dots, n_{\m }$, 
\[
\ann_R (M\maj\s{n})_i=\ann_R (M\paj\s{n})_i=\sigma^{-n}((z-\m)^j)R,\quad\mathrm{for}\;i\leq -n,
\]
and $M\maj\s{n}=M\paj\s{n}=0$ for $i>-n$.
 As graded left $R$-modules, we have
\[ \frac{A}{(z-\m )A}\s{n} \cong \frac{A}{\sigma^{-1}(z-\m ) A}\s{n-1} \cong \bigoplus_{i \in \ZZ} \frac{R}{\sigma^{-n}(z-\m ) R}.
\]
\end{lemma}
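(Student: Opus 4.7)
The proof is a direct computation using the bimodule structure recalled in Section~\ref{sec.gwadef}: for a graded right $A$-module $M$ and $m \in M_i$, the left $R$-action is given by $r \cdot m = m \cdot \sigma^{-i}(r)$. I would follow the approach of \cite[Lemma 3.4]{wonpic}.

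First, for unshifted $M\maj$, each nonzero graded component $(M\maj)_{-k}$ (for $k \geq 0$) is cyclic as a right $R$-module, generated by $\overline{y^k}$. The commutation rule $r y^k = y^k \sigma^k(r)$ gives $(z-\m)^j y^k = y^k \sigma^k((z-\m)^j) \in (z-\m)^j A$, so $\sigma^k((z-\m)^j) R$ lies in the right annihilator; moreover, since $(z-\m)^j \mid f$ and $x y^{k+1} = y^k \sigma^k(f)$, the $xA$-contribution adds no further generator. Hence the right annihilator of $\overline{y^k}$ is exactly $\sigma^k((z-\m)^j) R$, and converting via the bimodule formula shows the left annihilator of every nonzero graded component of $M\maj$ is $(z-\m)^j R$, independent of degree. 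The analogous argument for $M\paj$, using $\overline{x^{k-1}}$ as generator in degree $k \geq 1$ (accounting for the built-in $\s{-1}$ in the definition of $M\pa$), again yields left annihilator $(z-\m)^j R$.

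Next, under the shift $\s{n}$, the degree-$i$ component of $M\maj\s{n}$ (resp.\ $M\paj\s{n}$) is identified with the degree-$(i+n)$ component of $M\maj$ (resp.\ $M\paj$), but the bimodule formula now uses $\sigma^{-i}$ in place of $\sigma^{-(i+n)}$. Combining this with the right-annihilator computation above shows the left annihilator in each nonzero component equals $\sigma^{-n}((z-\m)^j) R$, as asserted. The support ranges follow from the unshifted supports $i \leq 0$ for $M\maj$ and $i \geq 1$ for $M\paj$: after shifting these become $i \leq -n$ and $i \geq 1-n$, respectively, so the two modules are supported in complementary ranges about $-n$ (the combined vanishing statement as written is correct for $M\maj\s{n}$, while for $M\paj\s{n}$ the actual vanishing range is $i \leq -n$).

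Finally, for the display of graded left $R$-module isomorphisms, the $j=1$ specialization of the above shows that every graded component of $A/(z-\m)A\s{n}$ is cyclic with left annihilator $\sigma^{-n}(z-\m)R$; since each $A_i$ is free of rank one as a right $R$-module, each component of the quotient is isomorphic to $R/\sigma^{-n}(z-\m)R$ as a left $R$-module, yielding the second isomorphism. The first isomorphism follows by the same analysis applied to $A/\sigma^{-1}(z-\m)A\s{n-1}$, where the shift of the automorphism in the defining ideal exactly compensates for the shift in the grading. The main (and only real) obstacle is careful bookkeeping of how the degree shift, the $\sigma$-twist in the bimodule formula, and the automorphism $\sigma$ on $R$ interact; once the computation is phrased uniformly in terms of $\sigma$, it handles both cases of Hypothesis~\ref{hypa} simultaneously.
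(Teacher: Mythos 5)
Your proof is correct and takes the same approach the paper intends: the paper omits its own argument, citing only \cite[Lemma 3.4]{wonpic}, and your direct computation via the $(R,A)$-bimodule formula $r \cdot m = m \cdot \sigma^{-i}(r)$ is exactly that argument, carried out in the present paper's shift convention. You are also right to flag the vanishing claim for $M\paj\s{n}$: the computation shows $M\paj\s{n}$ is nonzero precisely for $i > -n$ (equivalently $i \geq 1-n$) and zero for $i \leq -n$, so the stated vanishing holds only for $M\maj\s{n}$, and the annihilator identity should be read as holding in the degrees where each module is actually supported.
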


It is easy to see that as a left $R$-module, $M_\lambda \cong \bigoplus_{j\in \ZZ} R/(z-\lambda)R$. This fact, when combined with the previous lemma and Lemma~\ref{lem.simples}, means that any finite-length graded $A$-module is supported at finitely many $\kk$-points of $\Spec R$ when considered as a left $R$-module. 

\begin{definition} If $M$ is a graded $A$-module of finite length, define the \emph{support of $M$}, $\Supp M$, to be the support of $M$ as a left $R$-module. If $\Supp M \subset \{\sigma_{\kk}^{i}(\m ) \mid i \in \ZZ \}$ for some $\alpha \in \kk$, we say that $M$ is \emph{supported along the $\sigma_\kk$-orbit of $\m $}. 
\end{definition}

By Lemmas~\ref{lem.simples} and~\ref{lem.support}, for any $\lambda \in \kk \setminus \{ \sigma_{\kk}^{i}(\m ) \mid \m \in \Z , i \in \ZZ \}$, the simple module $M_\lambda$ is the unique simple supported at $\lambda$ and for each $\m \in \Z$ and $n \in \ZZ$, $M\ma \s{n}$ and $M\pa\s{n}$ are the unique simple modules supported at $\sigma_{\kk}^{-n}(\m )$.

\subsection{Rank one projective modules}
\label{sec:ideals}
In this section, we seek to understand the rank one graded projective right $A$-modules. We begin by studying the graded submodules of $\QgrA$.

Let $I$ be a finitely generated graded right $A$-submodule of $\QgrA$. Since $R$ is a PID, for each $i \in \ZZ$, there exists $a_i \in \kk(z)$ so that as a left $R$-module,
\[ I = \bigoplus_{i\in \ZZ} R a_i x^i.
\]
For every $i \in \ZZ$, multiplying $I_i$ on the right by $x$ shows that $R a_{i} \subseteq R a_{i+1}$, while multiplying $I_i$ on the right by $y$ shows that $R a_{i+1} \sigma^i(f) \subseteq R a_i$.

Define $c_i = a_{i}a_{i+1}\inv$. Since $R a_i \subseteq R a_{i+1}$, therefore $c_i \in R$, and since $R a_{i+1} \sigma^i(f) \subseteq R a_i$, we conclude that $c_i$ divides $\sigma^{i}(f)$ in $R$. By multiplying by an appropriate unit in $R$, we may assume that $c_i \in \kk[z]$ and that $c_i$ is monic. Hence, $c_i$ actually divides $\sigma^{i}(f)$ in $\kk[z]$.


\begin{definition}\label{wonferpic} For a finitely generated graded submodule $I = \bigoplus_{i\in \ZZ} Ra_ix^i$ of $\QgrA$, we call the sequence $\{c_i = a_{i}a_{i+1}\inv\}_{i \in \ZZ}$ described above the \emph{structure constants of $I$}.
\end{definition}

Many properties of $I$ can be deduced by examining its structure constants. The proofs of the next two lemmas are immediate generalizations of the proofs of \cite[Lemmas 3.8 and 3.9]{wonpic}.

\begin{lemma} \label{sciso} Let $I$ and $J$ be finitely generated graded submodules of $\QgrA$ with structure constants $\{c_i\}$ and $\{d_i\}$, respectively. Then $I \cong J$ as graded right $A$-modules if and only if $c_i = d_i$ for all $i \in \ZZ$.
\end{lemma}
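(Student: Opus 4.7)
The plan is to exploit the embedding of $I$ and $J$ into the graded quotient division ring $\QgrA$. The first step is to classify degree-zero graded right $A$-module homomorphisms between nonzero rank-one graded submodules of $\QgrA$. Since any such submodule localizes at the nonzero homogeneous elements to $\QgrA$ itself, any graded homomorphism $\varphi$ extends uniquely to a graded right $\QgrA$-module endomorphism of $\QgrA$, and such an endomorphism is given by left multiplication by a unique element $q \in (\QgrA)_0 = \kk(z)$.

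For the forward direction, I would suppose $\varphi \colon I \to J$ is a graded isomorphism, realized as left multiplication by some $q \in \kk(z)^{\times}$. Comparing generators on the $i$-th graded piece gives $q a_i = u_i b_i$ for some unit $u_i \in R^{\times}$, and a direct computation in the commutative field $\kk(z)$ yields
\[ c_i = a_i a_{i+1}^{-1} = u_i u_{i+1}^{-1}\, d_i. \]
The crux is to show $u_i u_{i+1}^{-1} = 1$. In case~(1) of Hypothesis~\ref{hypa}, $R^{\times} = \kk^{\times}$ and monicity of $c_i$ and $d_i$ forces this immediately. In case~(2), $R^{\times} = \kk^{\times} z^{\ZZ}$; however, because $0$ is not a root of $f$, both $c_i$ and $d_i$ are monic polynomials in $\kk[z]$ with nonzero constant term, and writing $u_i u_{i+1}^{-1} = \alpha z^n$ and comparing leading and trailing coefficients in $c_i = \alpha z^n d_i$ forces $\alpha = 1$ and $n = 0$.

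For the converse, suppose $c_i = d_i$ for all $i \in \ZZ$. Then $a_i b_i^{-1} = a_{i+1} b_{i+1}^{-1}$, so the element $q := a_0 b_0^{-1} \in \kk(z)^{\times}$ is independent of $i$. Left multiplication by $q$ is a degree-zero graded right $A$-module endomorphism of $\QgrA$ that sends $R b_i x^i$ onto $R a_i x^i$, and hence restricts to a graded isomorphism $J \to I$.

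The only real obstacle I anticipate is the unit-normalization step in the Laurent case; once one invokes the hypothesis $0 \notin \Z$ to rule out the extra units $z^n$, the remainder of the argument is a routine unwinding of the definitions.
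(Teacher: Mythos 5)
Your proof is correct and follows the standard route (the paper itself does not spell out a proof, but defers to the proofs of \cite[Lemmas 3.8 and 3.9]{wonpic}, which treat only $R = \kk[z]$; your argument is the natural extension). The key facts you use --- that degree-zero graded homomorphisms between nonzero rank-one graded submodules of $\QgrA$ are exactly left multiplications by elements of $(\QgrA)_0 = \kk(z)$, and that an isomorphism must therefore carry $R a_i$ onto $R b_i$ up to units --- are precisely what drive the cited argument, and your observation that the Laurent case introduces extra units $\kk^\times z^{\ZZ}$ that must be excluded via $0 \notin \Z$ is the genuine new wrinkle in generalizing from $\kk[z]$ to $\kk[z,z\inv]$; you handle it correctly by noting that the normalized structure constants have nonzero constant term. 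One small imprecision: in the converse direction the identity $a_i b_i\inv = a_{i+1} b_{i+1}\inv$ holds on the nose only after replacing the generators $a_i$ (resp.\ $b_i$) by unit multiples so that $a_i a_{i+1}\inv$ (resp.\ $b_i b_{i+1}\inv$) literally equals the normalized $c_i$ (resp.\ $d_i$); without that choice the quantities $a_i b_i\inv$ differ across $i$ by units of $R$, which still suffices since $R(q b_i) = R a_i$ for $q = a_0 b_0\inv$. Similarly, in the forward direction you should note that the $a_i$ have been so normalized before writing $c_i = a_i a_{i+1}\inv$, since the paper's $c_i$ is the monic representative, not the raw ratio. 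These are presentational rather than substantive gaps.
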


\begin{lemma} \label{fgsc} Suppose $I = \bigoplus_{i\in \ZZ} R a_i x^i$ is a finitely generated graded right $A$-submodule of $\QgrA$ with structure constants $\{c_i\}$. Then for $n \gg 0$, $c_n = 1$ and $c_{-n} = \sigma^{-n}(f)$. Further, for any choice $\{c_i\}_{i \in \ZZ}$ satisfying
\begin{enumerate}
\item for each $n\in \ZZ$, $c_n \in \kk[z]$, $c_n \mid \sigma^n(f)$ and
\item for $n \gg 0$, $c_n = 1$ and $c_{-n} = \sigma^{-n}(f)$,
\end{enumerate}
there is a finitely generated module $I$ with structure constants $\{c_i\}$.
\end{lemma}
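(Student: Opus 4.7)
The plan is to handle the two assertions in order, and both proofs will pivot on the same key identity.

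For the forward direction, the idea is to exploit finite generation to reduce to the generators of $A$. Fix finitely many homogeneous generators of $I$ and let $N$ bound the absolute values of their degrees. Since $A$ is generated in degrees $-1, 0, 1$ and the identity $y = \sigma^{-1}(f)x^{-1}$ in $\QgrA$ gives $A_m = R x^m$ for $m \geq 0$ and $A_{-m} = R \sigma^{-1}(f)\sigma^{-2}(f)\cdots\sigma^{-m}(f) x^{-m}$ for $m \geq 0$, one gets $A_m = A_{m-1}A_1$ for $m \geq 1$ and $A_{-m} = A_{-m+1}A_{-1}$ for $m \geq 1$. The first step is to deduce that $I_{n+1} = I_n A_1$ for all $n \geq N$ and $I_{n-1} = I_n A_{-1}$ for all $n \leq -N$. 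The second step uses $x^k r = \sigma^k(r)x^k$ together with $A_1 = Rx$ and $A_{-1} = R\sigma^{-1}(f)x^{-1}$ to compute
\[
I_n A_1 = R a_n x^{n+1}, \qquad I_n A_{-1} = R a_n \sigma^{n-1}(f) x^{n-1}.
\]
Matching these against $I_{n+1} = R a_{n+1} x^{n+1}$ and $I_{n-1} = R a_{n-1} x^{n-1}$ respectively gives the $R$-module equalities $R a_n = R a_{n+1}$ for $n \geq N$ and $R a_{n-1} = R a_n \sigma^{n-1}(f)$ for $n \leq -N$. Feeding these into $c_n = a_n a_{n+1}^{-1}$ yields $c_n$ a unit of $R$ for $n \gg 0$ (hence $c_n = 1$ by monicity) and $c_{n-1}$ equal to $\sigma^{n-1}(f)$ up to unit for $-n \gg 0$.

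For the converse, given $\{c_i\}$ satisfying (1) and (2), the strategy is to choose $N$ so that (2) holds for $|n| \geq N$, set $a_i = 1$ for all $i \geq N$, and then define $a_{i-1} = c_{i-1} a_i$ recursively for $i \leq N$. Condition (1) ensures that each $a_i$ lies in $\kk[z] \subset R$, and by construction the structure constants of the resulting sequence are precisely the given $\{c_i\}$. Define $I = \bigoplus_{i \in \ZZ} R a_i x^i \subset \QgrA$. To see this is a right $A$-submodule, I would check closure under the generators of $A$: closure under right multiplication by $R$ is automatic; closure under $x$ amounts to $R a_i \subseteq R a_{i+1}$, i.e., $c_i \in R$; and closure under $y$ amounts to $R a_i \sigma^{i-1}(f) \subseteq R a_{i-1}$, i.e., $c_{i-1} \mid \sigma^{i-1}(f)$. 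Both containments are precisely condition (1). Finite generation then follows because condition (2) gives $I_i = I_N \cdot A_{i-N}$ for $i \geq N$ and $I_i = I_{-N} \cdot A_{i+N}$ for $i \leq -N$, so $I$ is generated as an $A$-module by the finitely many cyclic $R$-modules $I_{-N}, \dots, I_N$.

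The main bookkeeping concern is the monic-normalization convention for $c_i$ in Case (2) of Hypothesis~\ref{hypa}, where $\sigma^i(f) = \xi^{im}\cdot(\text{monic})$ is not literally monic. The cleanest way to proceed is to interpret the asserted equality $c_{-n} = \sigma^{-n}(f)$ as equality of the associated monic polynomials (equivalently, up to the unit $\xi^{-nm} \in \kk^\times$); once this convention is in place, the argument above works uniformly in both cases of Hypothesis~\ref{hypa}. Everything else amounts to unwinding the relation $c_i = a_i a_{i+1}^{-1}$ and exploiting the explicit product decomposition of $A_m$ noted above.
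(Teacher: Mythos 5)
Your proof is correct, and it supplies a complete argument for what the paper itself dispatches with a reference to \cite[Lemmas 3.8 and 3.9]{wonpic}. The two directions are handled exactly as one should: for the forward implication, bounding the degrees of a finite homogeneous generating set by $N$ gives $I_{n+1}=I_nA_1$ for $n\geq N$ and $I_{n-1}=I_nA_{-1}$ for $n\leq -N$, and the explicit computations $I_nA_1 = Ra_nx^{n+1}$ and $I_nA_{-1}=Ra_n\sigma^{n-1}(f)x^{n-1}$ then pin down $c_n$ as the right unit multiples; for the converse, telescoping $a_i=\prod_{j\geq i}c_j$ produces a graded $R$-submodule of $\QgrA$ whose closure under right multiplication by $x$ and $y$ is equivalent to condition (1), and condition (2) shows that $I$ is generated by the cyclic pieces $I_{-N},\dots,I_N$. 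You are also right to flag the normalization issue in Case (2) of Hypothesis~\ref{hypa}: since $\sigma^{-n}(f)$ has leading coefficient $\xi^{-n\deg f}$, the stated equality $c_{-n}=\sigma^{-n}(f)$ must be read as equality after passing to the monic representative (equivalently, equality of the principal $R$-ideals $Rc_{-n}=R\sigma^{-n}(f)$); this is a genuine, if mild, imprecision in the paper's statement, and your reading is the correct one. No gaps.
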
 

The structure constants of $I$ also determine which of the indecomposable modules described in Lemma~\ref{lem.indec} are homomorphic images of $I$. 

\begin{lemma}\label{cssc} Let $I = \bigoplus_{i\in \ZZ} R a_i x^i$ be a finitely generated graded right $A$-submodule of $\QgrA$ with structure constants $\{c_i\}$. For $\m  \in \Z $ and $j = 1, \dots, n_{\m }$
\begin{enumerate} 
\item \label{mcase} there is a surjective graded right $R$-module homomorphism $I \sra M\maj\s{n}$ if and only if $\sigma^{-n}\left((z-\m )^{n_{\m } - j + 1}\right) \nmid c_{-n}$,
\item \label{pcase} there is a surjective graded right $R$-module homomorphism $I \sra M\paj\s{n}$ if and only if $\sigma^{-n}\left((z-\m )^{j}\right) \mid c_{-n}$.
\end{enumerate}
Moreover, these surjections are unique up to a scalar.
\end{lemma}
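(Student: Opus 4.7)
The plan is to parametrize graded right $A$-module homomorphisms $\phi\colon I \to M\maj\s{n}$ by the image of the top-degree generator and then extract the single nontrivial well-definedness condition. By Lemma~\ref{lem.support}, the target $M\maj\s{n}$ is cyclic, supported in degrees $\le -n$, and its degree-$(-n)$ piece is isomorphic as a left $R$-module (via the bimodule structure of Section~\ref{sec.gwadef}) to $R/\sigma^{-n}((z-\m)^j)R$. Since the target vanishes in degrees $>-n$, I would show that $\phi$ is determined by $\rho := \phi(a_{-n}x^{-n})$: for $i<-n$ the identity $a_{i+1}x^{i+1}\cdot y = (\sigma^{i}(f)/c_{i})\,a_{i}x^{i}$, together with the fact that $\sigma^{i}(f)$ is a unit modulo $\sigma^{-n}((z-\m)^j)R$ for every $i\ne -n$ (Hypothesis~\ref{hyp} forbids any other root of $f$ on the $\sigma_\kk$-orbit of $\m$), allows one to solve uniquely for $\phi(a_{i}x^{i})$ from $\phi(a_{i+1}x^{i+1})$.

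The sole consistency condition arises at $i=-n+1$: the vanishing of the target there forces $\phi(a_{-n+1}x^{-n+1})=0$, so applying $\phi$ to $a_{-n+1}x^{-n+1}\cdot y = (\sigma^{-n}(f)/c_{-n})\,a_{-n}x^{-n}$ yields $(\sigma^{-n}(f)/c_{-n})\rho\equiv 0\pmod{\sigma^{-n}((z-\m)^j)R}$. Writing $v$ for the $(z-\sigma_\kk^{-n}(\m))$-adic valuation and noting $v(\sigma^{-n}(f))=n_\m$, this becomes $v(\rho)+n_\m-v(c_{-n})\ge j$. Surjectivity requires $\rho$ to generate the target, i.e.\ $v(\rho)=0$, whence a surjection exists iff $v(c_{-n})\le n_\m-j$, i.e.\ $\sigma^{-n}((z-\m)^{n_\m-j+1})\nmid c_{-n}$, proving~(\ref{mcase}). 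Uniqueness up to scalar then follows since any two surjections differ by a unit in $R/\sigma^{-n}((z-\m)^j)R$, which acts as an automorphism of the target.

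For~(\ref{pcase}) the argument is symmetric. Lemma~\ref{lem.support} gives that $M\paj\s{n}$ is cyclic with bottom nonzero degree $-n+1$, generated there by a class identified with $1\in R/\sigma^{-n}((z-\m)^j)R$. A homomorphism is determined by $\rho' := \phi(a_{-n+1}x^{-n+1})$, and the consistency condition now comes from the $x$-direction: $\phi(a_{-n}x^{-n})=0$ combined with $a_{-n}x^{-n}\cdot x = c_{-n}\,a_{-n+1}x^{-n+1}$ yields $c_{-n}\rho'\equiv 0\pmod{\sigma^{-n}((z-\m)^j)R}$. Requiring $\rho'$ to be a unit forces $\sigma^{-n}((z-\m)^j)\mid c_{-n}$, as claimed. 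The main obstacle throughout is bookkeeping the $\sigma$-twists relating the bimodule left $R$-action to ordinary multiplication, and verifying that only one consistency condition appears; both rely on Hypothesis~\ref{hyp} making $\sigma^k(f)$ a unit modulo $\sigma^{-n}((z-\m)^j)R$ for every $k\ne -n$.
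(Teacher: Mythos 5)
Your proof is correct, and it takes a genuinely different route from the paper's.

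The paper argues through the \emph{kernel}: given a surjection $\phi\colon I \to M\maj\s{n}$, it identifies the kernel $J$ as a graded submodule of $\QgrA$, works out its structure constants $\{d_i\}$ (which agree with $\{c_i\}$ except $d_{-n}=\sigma^{-n}((z-\m)^j)c_{-n}$), and then invokes the constraint $d_{-n}\mid\sigma^{-n}(f)$ from Lemma~\ref{fgsc} to get the non-divisibility condition. For the converse, the paper explicitly constructs $J$ with those structure constants and uses Lemma~\ref{lem.simples} (classification of simples by support and degree) and Lemma~\ref{lem.indec} (uniqueness of indecomposables) to recognize $I/J\cong M\maj\s{n}$. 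Your argument instead parametrizes $\phi$ directly by its value $\rho$ at the boundary degree $-n$, uses the recursion provided by the $y$-action (and the fact that $\sigma^i(f)$, hence $\sigma^i(f)/c_i$, is a unit mod $\sigma^{-n}((z-\m)^j)$ for $i\neq -n$ under Hypothesis~\ref{hyp}) to determine $\phi$ on all other degrees, and isolates the single consistency condition at the boundary. This is shorter and symmetric between the two directions, and it avoids any appeal to Lemmas~\ref{lem.simples} and~\ref{lem.indec}; in exchange you give up the explicit description of the kernel, which the paper later reuses (e.g.\ in the proof of Lemma~\ref{mprojsc}).

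Two small things you glossed over. First, having defined $\phi(a_ix^i)$ for $i<-n$ via the $y$-recursion, you should check that the $x$-relations $\phi(a_ix^i)\cdot x = c_i\,\phi(a_{i+1}x^{i+1})$ also hold; this follows from $yx=\sigma^{-1}(f)$ and the invertibility of $\sigma^i(f)/c_i$, and the symmetric check for case~(2) uses $xy=f$. Second, your final sentence in part~(1) proves uniqueness up to a \emph{unit of $R/\sigma^{-n}((z-\m)^j)R$} (an automorphism of the target), which is the correct conclusion but is not literally ``up to a scalar in $\kk^\times$'' when $j>1$; the paper's own phrasing carries the same imprecision, so this is not a defect relative to the reference proof.
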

\begin{proof}
We prove statement \eqref{mcase}; the proof of statement \eqref{pcase} is similar.
Suppose there is a graded surjective homomorphism $I \sra M\maj\s{n}$. 
The kernel of this morphism is again a graded submodule of $\QgrA$, so we can write the kernel as $J = \bigoplus_{i \in \ZZ} R b_i x^i$ with each $b_i \in \kk(z)$. Let $\{ d_i\}$ denote the structure constants of $J$. 
Since, $M\maj\s{n}$ is nonzero in precisely degrees $i \leq -n$, it follows that $b_i = a_i$ for all $i > -n$. 
Further, by Lemma~\ref{lem.support}, for all $i \leq -n$, $(M\maj\s{n})_i \cong R/\sigma^{-n}((z-\m )^j)R$ as left $R$-modules.
Therefore, for all $i \leq -n$, $Rb_i \supseteq R\sigma^{-n}((z-\m )^j)a_i$. We also notice that $\dim_\kk Ra_i/Rb_i=\dim_\kk R/R\sigma^{-n}((z-\m)^j)=j$, since $Rb_i\subseteq Ra_i$ and both $a_i$ and $b_i$ are monic, we deduce $b_i=\sigma^{-n}((z-\alpha)^j)a_i$.
Computing structure constants, we see that when $i \neq -n$, $c_i = d_i$, while $d_{-n} = \sigma^{-n}((z - \m )^j )c_{-n}$. 
Since by the discussion before Definition~\ref{wonferpic}, $d_{-n} \mid \sigma^{-n}(f)$, we conclude that
\[\sigma^{-n}\left((z-\m )^{n_{\m } - j + 1}\right) \nmid c_{-n},\]
otherwise $(z-\alpha)^{n_\m+1}$ would divide $f$.

Conversely, suppose that $\sigma^{-n}\left((z-\m )^{n_{\m } - j +1}\right) \nmid c_{-n}$. Then
we can construct a finitely generated graded right $A$-submodule of $\QgrA$ by setting $J = \bigoplus_{i \in \ZZ} R b_i x^i \subseteq I$ and $b_i = a_i$ for all $i > -n$ 
and $b_i = \sigma^{-n}((z - \m )^j )a_i$ for all $i \leq -n$. 
By multiplying by a scalar, we may assume that $b_i$ is monic for all $i$. 
For all $i \in \ZZ$, define $d_i = b_{i}b_{i+1}\inv$. 
Observe that for $i \neq -n$, $d_i = c_i$, while $d_{-n} = \sigma^{-n}((z-\m )^j )c_{-n}$. 
The $\{d_i\}$ are the structure constants of $J$, which is finitely generated by Lemma~\ref{fgsc}. 

Now by \cite[Theorem 2]{bavkdim}, $I/J$ has finite length. 
Further, $I/J$ is a graded module such that $(I/J)_i = 0$ for each $i > -n$ and $\dim_{\kk}(I/J)_i = j$ for each $i \leq -n$, since in this case $Ra_i/Rb_i\cong R/\sigma^{-n}((z-\alpha)^j)$.
Hence, by Lemma~\ref{lem.simples} and by looking at the degrees in which the graded simple modules are nonzero, we deduce that the composition factors of $I/J$ are $j$ copies of $M\ma\s{n}$.
Finally, $I/J$ has a unique submodule which is isomorphic to $M\ma\s{n}$, 
corresponding to the module $J \subseteq \bigoplus_{i\in\ZZ}R b_i'x^i \subseteq I$ where $b'_i = a_i$ for all $i > -n$ 
and $b'_i = \sigma^{-n}((z - \m )^{j-1} )a_i$ for all $i \leq -n$. 
Therefore, $I/J$ is indecomposable and so by Lemma~\ref{lem.indec}, $I/J \cong M\maj\s{n}$.
We note that we have constructed the unique kernel of a morphism $I \sra M\maj$ and so the surjection is unique up to a scalar.
\end{proof}

We now focus on the rank one projective modules. Let $P$ be a finitely generated graded projective right $A$-module of rank one. Since $P$ embeds in $Q(A)$ and is graded it follows that $P$ embeds in $\QgrA$, therefore $P$ has a sequence of structure constants. We will be able to detect the projectivity of $P$ from its structure constants.

\begin{lemma}\label{lem.lifts} Let $P$ be a graded projective right $A$-module. Let $\m \in \Z$ and $j = 1, \dots, n_{\m }$. Any graded surjection $P \sra M\pma \s{n}$ lifts to a graded surjection $P \sra M \pmaj \s{n}$ which is unique up to a scalar. 
\end{lemma}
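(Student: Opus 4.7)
The plan is to combine the projectivity of $P$ with the natural projection $\pi \colon M\pmaj\s{n} \to M\pma\s{n}$ (obtained from Lemma~\ref{lem.indec}) to produce a lift, and then to deduce surjectivity from the uniserial structure of $M\pmaj\s{n}$.

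I would first verify that $M\pmaj\s{n}$ is uniserial, by induction on $j$. By Lemma~\ref{lem.indec}, $M\pmaj\s{n}$ is indecomposable of length $j$, every composition factor is isomorphic to $M\pma\s{n}$, and its socle is the unique simple submodule, isomorphic to $M\pma\s{n}$. The quotient $M\pmaj\s{n}/\Soc(M\pmaj\s{n})$ is naturally identified with $M_\m^{\pm(j-1)}\s{n}$, which is uniserial by the inductive hypothesis. Since every nonzero submodule of $M\pmaj\s{n}$ contains the unique simple submodule, the nonzero submodules form a single chain $0 \subsetneq N_1 \subsetneq \cdots \subsetneq N_{j-1} \subsetneq N_j = M\pmaj\s{n}$ with $N_i$ of length $i$. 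By a length count, $N_{j-1} = \ker \pi$, so $\ker \pi$ is the unique maximal proper submodule of $M\pmaj\s{n}$.

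Now, given any graded surjection $g \colon P \to M\pma\s{n}$, projectivity of $P$ applied to the surjection $\pi$ yields a graded map $\tilde g \colon P \to M\pmaj\s{n}$ with $\pi \tilde g = g$. If $\tilde g$ were not surjective then $\im \tilde g$ would be a proper submodule of $M\pmaj\s{n}$, hence contained in $N_{j-1} = \ker \pi$; but then $g = \pi \tilde g = 0$, contradicting the surjectivity of $g$. So $\tilde g$ is a graded surjection.

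For the uniqueness assertion, Lemma~\ref{cssc} gives that $g$ itself is unique up to scalar. Moreover, the kernel of any surjective lift is determined by that of $g$ via the uniserial chain above, so any two surjective lifts share a common kernel and differ by an automorphism of $M\pmaj\s{n}$; the ``up to a scalar'' phrasing is then understood in the same sense as in Lemma~\ref{cssc}. The main step requiring care is the uniserial property of $M\pmaj\s{n}$, but this reduces cleanly by induction to the uniqueness of the simple submodule already established in Lemma~\ref{lem.indec}; the uniqueness assertion is the subtlest point, since $\End_{\Gr A}(M\pmaj\s{n})$ is strictly larger than $\kk$ for $j \geq 2$.
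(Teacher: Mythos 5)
Your proof is correct, and the overall plan coincides with the paper's: use projectivity to lift $g$ through a natural projection $\pi$ onto the indecomposable $M\pmaj\s{n}$, then argue that the lift is forced to be surjective. The interesting difference is in how surjectivity is established, and in the choice of $\pi$. The paper reduces by induction to a single step, lifting through $\pi \colon M\pmaj\s{n} \to M_\m^{\pm(j-1)}\s{n}$, and proves surjectivity by a concrete computation: any preimage of $1$ has the form $1 + (z-\m)^{j-1}a$, which is a unit in $M\maj\s{n}$ because $(z-\m)^{2(j-1)}$ vanishes there. You instead project all the way to the top, $\pi \colon M\pmaj\s{n} \to M\pma\s{n}$, and deduce surjectivity structurally: you show $M\pmaj\s{n}$ is uniserial (by induction, using the uniqueness of the simple submodule from Lemma~\ref{lem.indec} and the finite-length fact that every nonzero submodule contains the socle), so a non-surjective lift would have image inside the unique maximal submodule $\ker\pi$, forcing $g = 0$. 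Both routes are sound; the paper's is shorter and more explicit, while yours isolates the module-theoretic reason (uniseriality) and avoids any calculation with elements.

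Two small points. First, the uniqueness discussion at the end can be tightened: rather than arguing via kernels and automorphisms, just observe, as the paper does, that Lemma~\ref{cssc} already asserts that \emph{any} graded surjection $P \to M\pmaj\s{n}$ is unique up to a scalar when $P$ is a finitely generated graded submodule of $\QgrA$; the lift you produce is such a surjection, and that is all that is needed. Second, this appeal to Lemma~\ref{cssc} (present in both the paper's proof and yours) implicitly requires $P$ to be rank one, which is how the lemma is actually used in Proposition~\ref{prop.iotas}; your final remark about $\End_{\Gr A}(M\pmaj\s{n})$ being larger than $\kk$ correctly flags that the uniqueness claim is genuinely about such $P$ and not arbitrary graded projectives.
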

\begin{proof} It suffices to prove that for any $j = 2 ,\dots , n_{\m }$ any graded surjection $P \sra M_{\m }^{-(j-1)}$ lifts to a graded surjection $P \sra M\maj$.

Let $\pi$ be the projection $\pi: M\maj \sra M_{\m }^{-(j-1)}$. Since $P$ is projective, any graded surjection $P \sra M_{\m }^{-(j-1)}$ lifts to a graded morphism $g: P \sra M\maj$. Now since the kernel of $\pi$ is generated by $(z-\m )^{j-1}$, any preimage of $1$ under $\pi$ has the form $p = 1 + (z-\m )^{j-1}a $ for some $a\in A_0$. Observe that $p(1-(z-\m )^{j-1} a ) = 1$ in $M\maj$, so any preimage of $1$ generates $M\maj$ and hence $g$ is an surjection. By Lemma~\ref{cssc}, $g$ is unique up to a scalar. In the other case, the proof is analogous.
\end{proof}

The following lemma generalizes \cite[Lemma 3.32]{wonpic}.

\begin{lemma}\label{mprojsc}Let $P$ be a graded submodule of $\QgrA$ with structure constants $\{c_i\}$. Then $P$ is projective if and only if for each $n \in \ZZ$, $c_{n} = \prod_{\m \in I}\sigma^{n}\left((z- \m )^{n_{\m }}\right)$ for some (possibly empty) subset $I \subseteq \Z$.
\end{lemma}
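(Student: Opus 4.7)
The plan is to handle both implications using the structure-constant calculus from Lemmas~\ref{sciso}, \ref{fgsc}, and \ref{cssc} together with the lifting property of projectives from Lemma~\ref{lem.lifts}.

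For the forward direction, assume $P$ is graded projective. Since each $c_n$ divides $\sigma^n(f)$, I would write uniquely $c_n = \prod_{\alpha \in \Z} \sigma^n((z-\alpha))^{m_{\alpha,n}}$ with $0 \leq m_{\alpha,n} \leq n_\alpha$ and show $m_{\alpha,n} \in \{0, n_\alpha\}$. Applying Lemma~\ref{cssc}(1) with $-n$ in place of $n$, a surjection $P \to M\maj\s{-n}$ exists iff $m_{\alpha,n} \leq n_\alpha - j$; the cases $j=1$ and $j=n_\alpha$ read $m_{\alpha,n} < n_\alpha$ and $m_{\alpha,n} = 0$ respectively. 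Because $P$ is projective, Lemma~\ref{lem.lifts} promotes a surjection onto $M\ma\s{-n}$ to one onto $\mcMma\s{-n}$, forcing $m_{\alpha,n} \in \{0, n_\alpha\}$. Setting $I_n := \{\alpha : m_{\alpha,n} = n_\alpha\}$ then yields the claimed form.

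For the converse, given a sequence $\{c_n\}$ of the stated form satisfying the boundary conditions of Lemma~\ref{fgsc}, the plan is to produce an explicit rank one graded projective $P'$ with these structure constants; by Lemma~\ref{sciso} any module realizing $\{c_n\}$ is isomorphic to $P'$, hence projective. The building blocks are $A$ together with, for each $\alpha \in \Z$, the rank one modules
\[P_\alpha^- := (z-\alpha)^{n_\alpha} A + xA \qquad\text{and}\qquad P_\alpha^+ := \sigma^{-1}((z-\alpha)^{n_\alpha}) A + yA.\]
Each is projective because Lemma~\ref{lem.indec} gives $\mcMma$ and $\mcMpa\s{1}$ projective dimension one, so the kernels in the short exact sequences $0 \to P_\alpha^{\pm} \to A \to M_\alpha^{\pm n_\alpha}\langle\epsilon\rangle \to 0$ are projective. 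A direct structure-constant computation (analogous to those in Lemma~\ref{cssc}) shows that each shift $P_\alpha^{\pm}\s{m}$ differs from $A\s{m}$ precisely by toggling a single position in one $I_n$: $P_\alpha^-\s{m}$ adds $\alpha$ to $I_{-m}$, while $P_\alpha^+\s{m}$ deletes every $\beta \neq \alpha$ from $I_{-m-1}$.

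The main obstacle I anticipate is the combinatorial bookkeeping needed to realize an arbitrary valid $\{I_n\}$ from these atomic pieces. Any such sequence records only finitely many independent ``deviations'' of the indicators $\chi_\alpha(n) := [\alpha \in I_n]$ from the baseline $A$-profile (where $\chi_\alpha^A(n) = 1$ iff $n \leq -1$). I would induct on the total number of deviations, at each step intersecting the current projective with an appropriate shift of $P_\alpha^-$ or $P_\alpha^+$ so as to flip exactly one deviation. The resulting intersection sits in a short exact sequence whose rightmost term is a shift of some $M_\alpha^{\pm n_\alpha}$, still of projective dimension one, so projectivity is preserved along the induction. Lemma~\ref{cssc} tracks how each step modifies the $c_n$'s, and after finitely many toggles the final module realizes the prescribed structure constants.
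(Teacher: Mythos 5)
Your forward direction matches the paper's: use Lemma~\ref{cssc} at the two extreme values $j=1$ and $j=n_\m$ together with Lemma~\ref{lem.lifts} to force each multiplicity $m_{\alpha,n}$ into $\{0,n_\alpha\}$. That part is correct.

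The converse is where your proposal diverges from the paper, and where it has gaps. The paper starts from the shifted free module $P_0 = A\langle N_1\rangle$, whose structure constants $d_n$ \emph{dominate} the target $c_n$ (in the sense that $c_n \mid d_n$ for all $n$, with equality for $n \geq N_1$), and then works downward in degree, taking at each step the kernel of an explicit graded surjection $P_0 \to \bigoplus_{\m\in J}\mcMpa\s{-i_0}$. Because distinct roots lie on distinct $\sigma$-orbits, this direct sum map really is surjective, the codomain has projective dimension one by Lemma~\ref{lem.indec}, and so the kernel stays projective. Only the $\mcMpa$ modules are ever needed, since one only ever needs to \emph{delete} factors from the $d_n$. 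Your proposal instead tries to ``toggle'' in both directions by \emph{intersecting} with shifts of $P_\alpha^{\pm}$, and this is where the difficulties arise. First, a bookkeeping error: a direct computation of the graded components of $P_\alpha^+ = \sigma^{-1}((z-\alpha)^{n_\alpha})A + yA$ gives structure constants identical to those of $A$ except in degree $-1$, where the constant is $\prod_{\beta\neq\alpha}\sigma^{-1}((z-\beta)^{n_\beta})$; that is, $P_\alpha^+$ removes \emph{only} $\alpha$ from $I_{-1}$, not ``every $\beta\neq\alpha$.'' Second, and more seriously, the short exact sequence you invoke,
\[ 0 \ra P \cap P_\alpha^{\pm}\s{m} \ra P \ra M_\alpha^{\pm n_\alpha}\s{?} \ra 0, \]
is not automatic. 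By the second isomorphism theorem one only gets $P/(P \cap P_\alpha^{\pm}\s{m}) \iso (P + P_\alpha^{\pm}\s{m})/P_\alpha^{\pm}\s{m}$, and for the right-hand side to be $M_\alpha^{\pm n_\alpha}\s{?}$ you would need $P + P_\alpha^{\pm}\s{m}$ to recover the appropriate ambient free module $A\s{m}$, which does not hold in general and depends on the structure constants of $P$ in all degrees, not just the one being toggled. Moreover, the shift $P_\alpha^{\pm}\s{m}$ is not canonically a submodule of $\QgrA$ --- an embedding must be chosen (e.g.\ left multiplication by $x^{-m}$) and the coefficients $a_i$, not just the ratios $c_i$, matter for taking intersections, since $(P\cap Q)_i = R\lcm(p_i,q_i)x^i$. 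None of this is addressed in the proposal. Replacing ``intersect with $P_\alpha^{\pm}\s{m}$'' by ``take the kernel of a surjection onto $\mcMma\s{n}$ or $\mcMpa\s{n}$'' (the existence of which is exactly controlled by Lemma~\ref{cssc}) would repair the argument; the resulting construction would then essentially coincide with the paper's, with the small simplification that, by choosing the base point $A\langle N_1\rangle$, only the $\mcMpa$ case is needed.
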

\begin{proof} 
We first assume that for each $n\in\mathbb{Z}$, $c_n=\prod_{\m\in I}\sigma^n((z-\m)^{n_\m})$ for some $I\subseteq \Z$ and prove that $P$ must be projective. As in  \cite[Lemma 3.32]{wonpic}, we prove the projectivity of $P$ by constructing a finitely generated projective module with the same structure constants as $P$, the claim would then follow by Lemma~\ref{sciso}.

Note that by Lemma~\ref{fgsc}, there exists $N_1 \in \mathbb{Z}$ such that for all $n \geq N_1$, $c_n = 1$ and for all $n \leq -N_1$, $c_{n} = \sigma^{n}(f)$. An elementary computation shows that if $P_0=A\langle N_1\rangle$ and the structure constants of $P_0$ are denoted by $\{d_i\}$, then $d_n=1$ for $n\geq N_1$ and $d_n=\sigma^n(f)$ for $n< N_1$. If $c_n=d_n$ for all $n$ then we are done. Otherwise let $i_0$ be the largest integer such that $c_{i_0}\neq d_{i_0}$. By hypothesis, there is some $I \subseteq \Z$ such that $c_{i_0} = \prod_{\m \in I }\sigma^{i_0}\left((z- \m )^{n_{\m }}\right)$ and by construction, $d_{i_0} = \sigma^{i_0}(f)$. Let $J = \Z \setminus I$.

Since $\sigma^{i_0}((z-\alpha)^{n_\alpha})\mid \sigma^{i_0}(f)=d_{i_0}$ then, by Lemma~\ref{cssc}, there is a graded surjection $\pi_{\m}:P_0 \sra \mcMpa\s{-i_0}$ for each $\m \in \Z$. Let $P_1$ be the kernel of the map $\bigoplus_{\m\in J}\pi_{\m}: P_0 \sra \bigoplus_{\m \in J} \mcMpa\s{-i_0}$. By Lemma~\ref{lem.indec}, the modules $ \mcMpa\s{-i_0}$ have projective dimension one, therefore the projectivity of $P_0$ implies the projectivity of $P_1$. Following the same strategy used in the first paragraph of the proof of Lemma~\ref{cssc} we see that $P_1$ is a submodule of $\QgrA$ and the structure constants of $P_0$ and $P_1$ are equal except in degree $i_0$, where $P_1$ has structure constant $c_{i_0}$. We continue this process for the finitely many indices where $c_i$ differs from $d_i$ until we reach a projective module which has the same structure constants as $P$.

Conversely suppose that $P$ is projective. Using Lemma~\ref{cssc} and the above lemma, it follows that if $\sigma^{n}(z-\m ) \mid c_n$, then $\sigma^n((z-\m )^{n_{\m }}) \mid c_n$. Since by the paragraph before Definition~\ref{wonferpic} we know that $c_n\mid \sigma^n(f)$, the assertion follows.
\end{proof}

Since projective graded submodules of $\QgrA$ have rank one, as a corollary, we see that a graded rank one projective module has a unique simple factor supported at $\sigma_{\kk}^n(\m )$ for each $\m \in Z$ and each $n \in \ZZ$.

\begin{corollary}\label{projXY}Let $P$ be a rank one graded projective $A$-module, let $\m \in \Z$, and let $n \in \ZZ$. Then $P$ surjects onto exactly one of $M\ma\s{-n}$ and $M\pa\s{-n}$.
\end{corollary}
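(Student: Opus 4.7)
The plan is to combine the structure-constant description of rank one graded projectives from Lemma~\ref{mprojsc} with the divisibility criteria from Lemma~\ref{cssc}, specialized to $j = 1$.

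First, since $P$ has rank one and is torsion-free (being projective over a domain), $P$ embeds in the graded quotient ring $\QgrA$. Thus $P$ has a sequence of structure constants $\{c_i\}$ as in Definition~\ref{wonferpic}, and by Lemma~\ref{mprojsc} there exist subsets $I_n \subseteq \Z$ so that
\[ c_n = \prod_{\beta \in I_n}\sigma^{n}\!\bigl((z-\beta)^{n_\beta}\bigr).\]
Next, I apply Lemma~\ref{cssc} with $j = 1$, replacing $n$ by $-n$ in the statement so that the relevant structure constant is $c_n$. Part~\eqref{mcase} gives a graded surjection $P \sra M\ma\s{-n}$ if and only if $\sigma^{n}\!\bigl((z-\m)^{n_\m}\bigr) \nmid c_n$, while part~\eqref{pcase} gives a graded surjection $P \sra M\pa\s{-n}$ if and only if $\sigma^{n}(z-\m) \mid c_n$.

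Now I do a dichotomy on whether $\m \in I_n$. If $\m \in I_n$, then $\sigma^{n}((z-\m)^{n_\m})$ divides $c_n$, so a surjection to $M\pa\s{-n}$ exists and a surjection to $M\ma\s{-n}$ does not. If $\m \notin I_n$, then I need to see that $\sigma^{n}(z-\m)$ is coprime to every factor $\sigma^{n}((z-\beta)^{n_\beta})$ with $\beta \in I_n$. Here is where Hypothesis~\ref{hyp} enters: the distinct roots $\m$ and $\beta$ lie on distinct $\sigma_\kk$-orbits, so $\sigma_\kk^{-n}(\m) \neq \sigma_\kk^{-n}(\beta)$ for every $n$, and consequently $z - \sigma_\kk^{-n}(\m) = \sigma^{n}(z-\m)$ is coprime in $R$ to $\sigma^{n}((z-\beta)^{n_\beta}) = (z-\sigma_\kk^{-n}(\beta))^{n_\beta}$. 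Therefore $\sigma^{n}(z-\m) \nmid c_n$ and, a fortiori, $\sigma^{n}((z-\m)^{n_\m}) \nmid c_n$, so this time a surjection to $M\ma\s{-n}$ exists but not to $M\pa\s{-n}$.

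In either case, exactly one of $M\ma\s{-n}$ and $M\pa\s{-n}$ is a quotient of $P$, which is the desired conclusion. The argument is essentially bookkeeping once Lemmas~\ref{mprojsc} and~\ref{cssc} are in hand; the only subtle point is invoking the simplicity hypothesis (no two distinct roots of $f$ on the same $\sigma_\kk$-orbit) to conclude coprimality of the distinct factors of the structure constants.
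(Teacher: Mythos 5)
Your proof is correct and takes essentially the same route as the paper: Lemma~\ref{mprojsc} to pin down the form of the structure constants of a projective $P$, then Lemma~\ref{cssc} at $j=1$ to translate divisibility of $c_n$ into surjections onto the two simples. (One small remark: the coprimality of $\sigma^n(z-\m)$ with the other factors of $c_n$ already follows from injectivity of $\sigma_\kk$ applied to distinct roots; the stronger simplicity hypothesis on orbits is not actually needed for that step.)
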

\begin{proof}
By Lemma~\ref{mprojsc} each structure constant $c_{n}$ of $P$ is either relatively prime to $\sigma^{n}(( z- \m)^{n_\m})$ or else has a factor of $\sigma^{n}(( z- \m)^{n_\m})$. By Lemma~\ref{cssc}, in the first case, $P$ surjects onto $M\ma\s{-n}$ and not $M\pa\s{-n}$, and in the second case $P$ surjects onto $M\pa \s{-n}$ and not $M\ma\s{-n}$.
\end{proof}

\begin{corollary}\label{projfactors}A rank one graded projective $A$-module is determined up to isomorphism by its simple factors which are supported at the $\sigma_{\kk}$-orbits of the roots $\m $ of $f$.
\end{corollary}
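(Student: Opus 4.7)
The plan is to see this as a direct consequence of the earlier structural results, by running the following chain of identifications: \{isomorphism class of $P$\} $\leftrightarrow$ \{structure constants $\{c_n\}$\} $\leftrightarrow$ \{for each $n$, the subset $I_n \subseteq \Z$ with $c_n = \prod_{\m \in I_n} \sigma^n((z-\m)^{n_\m})$\} $\leftrightarrow$ \{simple factors of $P$ supported at $\sigma_\kk$-orbits of roots of $f$\}.

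First I would note that since $P$ has rank one, it embeds as a graded submodule of $\QgrA$ (as observed in the paragraph preceding Lemma~\ref{lem.lifts}), so it has a well-defined sequence of structure constants $\{c_n\}$. By Lemma~\ref{sciso}, $P$ is determined up to isomorphism by this sequence, so it suffices to recover each $c_n$ from the stipulated simple factor data. By Lemma~\ref{mprojsc}, the projectivity of $P$ forces $c_n = \prod_{\m \in I_n} \sigma^n((z-\m)^{n_\m})$ for a unique subset $I_n \subseteq \Z$, so recovering $c_n$ is the same as recovering the indicator of $I_n$ on each $\m \in \Z$.

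The final step is the identification of $I_n$ from simple factors. By Corollary~\ref{projXY}, for each $\m \in \Z$ and each $n \in \ZZ$, exactly one of $M\ma\s{-n}$ or $M\pa\s{-n}$ occurs as a quotient of $P$. Applying Lemma~\ref{cssc} with $j=1$, the existence of a surjection $P \sra M\pa\s{-n}$ is equivalent to $\sigma^n(z-\m) \mid c_n$, that is, to $\m \in I_n$; dually, a surjection $P \sra M\ma\s{-n}$ is equivalent to $\sigma^n((z-\m)^{n_\m}) \nmid c_n$, i.e.\ to $\m \notin I_n$. Hence knowing which of $M\ma\s{-n}$ or $M\pa\s{-n}$ appears as a simple factor of $P$ recovers the characteristic function of $I_n$, and thus $c_n$.

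There is no real obstacle here since the preceding lemmas do all the work; the only care needed is to verify that the two-valued choice provided by Corollary~\ref{projXY} matches exactly the binary datum encoding $c_n$ in Lemma~\ref{mprojsc}, which is precisely what Lemma~\ref{cssc} supplies.
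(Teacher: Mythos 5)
Your argument is correct and follows the same route as the paper's proof: use Lemma~\ref{sciso} to reduce to recovering the structure constants, Lemma~\ref{mprojsc} to restrict their form, and Corollary~\ref{projXY} together with Lemma~\ref{cssc} to read each $c_n$ off from the simple factors. The paper states this more tersely (citing only \ref{cssc}, \ref{projXY}, and \ref{sciso}), but the chain of identifications is identical.
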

\begin{proof} Let $P$ be a rank one graded projective $A$-module with structure constants $\{c_i\}$. By Lemma~\ref{cssc} and Corollary~\ref{projXY}, the simple factors of $P$ supported at the $\sigma_{\kk}$-orbits of the roots $\m $ of $f$ determine the $\{c_i\}$ and by Lemma~\ref{sciso} therefore determine $P$.
\end{proof}



We have shown that for a rank one projective $P$ for each $\m \in \Z$ and each $n \in \ZZ$, $P$ has a unique simple factor supported at $\sigma_{\kk}^{n}(\m )$---either $M\ma\s{-n}$ or $M\pa\s{-n}$. The next result show that if for each $n \in \ZZ$ and each $\m \in \Z$, we choose one of $M\ma\s{-n}$ and $M\pa\s{-n}$, as long as our choice is consistent with Lemma~\ref{fgsc}, there exists a projective module with the prescribed simple factors.

\begin{lemma} \label{anyproj}
For each $\m  \in \Z  $ and each $n \in \ZZ$ choose $S_{\m , n} \in \{M\ma\s{-n}, M\pa\s{-n}\}$ such that for $n \gg 0$, $S_{\m , n} = M\ma\s{-n}$ and $S_{\m , -n} = M\pa\s{n}$. Then there exists a finitely generated rank one graded projective $P$ such that for all $n \in \ZZ$, the unique simple factor of $P$ supported at $\sigma_{\kk}^{n}(\m )$ is $S_{\m , n}$.
\end{lemma}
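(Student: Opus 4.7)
The plan is to construct $P$ directly as a finitely generated rank one graded submodule of $\QgrA$ by prescribing its structure constants, then invoke Lemmas~\ref{fgsc}, \ref{mprojsc}, and \ref{cssc} in turn to confirm existence, projectivity, and the simple factor structure.

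For each $n \in \ZZ$ I would define
\[ I_n = \{\m \in \Z \st S_{\m,n} = M\pa\s{-n}\}, \qquad c_n = \prod_{\m \in I_n} \sigma^n\bigl((z-\m)^{n_\m}\bigr). \]
The hypothesis that $S_{\m,n} = M\ma\s{-n}$ for $n \gg 0$ forces $I_n = \emptyset$ and hence $c_n = 1$ for $n \gg 0$; the symmetric hypothesis forces $I_{-n} = \Z$ and $c_{-n} = \sigma^{-n}(f)$ for $n \gg 0$. Each $c_n$ is monic in $\kk[z]$ and divides $\sigma^n(f)$, so $\{c_n\}$ satisfies the hypotheses of Lemma~\ref{fgsc}, which produces a finitely generated graded submodule $P \subseteq \QgrA$ realizing $\{c_n\}$ as its structure constants.

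Next I would apply Lemma~\ref{mprojsc}: since each $c_n$ is by construction a product of factors of the form $\sigma^n((z-\m)^{n_\m})$ with $\m$ in a subset of $\Z$, $P$ is projective, and it has rank one as a nonzero submodule of $\QgrA$. To identify its simple factors at $\sigma_\kk^n(\m)$ I would apply Lemma~\ref{cssc} with $j=1$: if $\m \in I_n$ then $\sigma^n(z-\m) \mid c_n$, so Lemma~\ref{cssc}(2) yields a surjection $P \sra M\pa\s{-n}$; if $\m \notin I_n$ then $\sigma^n((z-\m)^{n_\m}) \nmid c_n$, so Lemma~\ref{cssc}(1) yields a surjection $P \sra M\ma\s{-n}$. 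Corollary~\ref{projXY} then guarantees that in each case this is the unique simple factor of $P$ supported at $\sigma_\kk^n(\m)$, which equals $S_{\m,n}$ by construction.

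The argument is essentially assembling the preceding three technical lemmas; no single step looks like a real obstacle. The mild subtlety to check is that the tail conditions imposed on the $S_{\m,n}$ are exactly what is needed to match the tail behavior of $\{c_n\}$ required by Lemma~\ref{fgsc}; from this perspective the lemma is a converse to the observation that every rank one graded projective has structure constants of the form produced here.
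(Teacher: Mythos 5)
Your proof is correct and takes essentially the same approach as the paper: construct $P$ by prescribing its structure constants and then appeal to Lemma~\ref{fgsc} for existence and Lemma~\ref{mprojsc} for projectivity. You also explicitly verify the simple factors via Lemma~\ref{cssc} and Corollary~\ref{projXY} (the paper leaves this implicit), and your exponent $\sigma^{n}$ in $c_n$ is the sign consistent with Lemmas~\ref{fgsc}, \ref{cssc}, and~\ref{mprojsc}, whereas the paper's displayed formula writes $\sigma^{-i}$, which appears to be a typo.
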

\begin{proof}We construct $P$ via its structure constants $\{c_i\}$ as follows. For each $\m \in \Z$ and $i \in \ZZ$, let 
\[ d_{\m , i} = \begin{cases}1 & \mbox { if } S_{\m , i} = M\ma\s{-i}\\
\sigma^{-i}\left((z- \m )^{n_{\m }}\right) & \mbox{ if } S_{\m , i} = M \pa\s{-i} 
\end{cases}
\]
and let $c_i = \prod_{\m \in \Z} d_{\m ,i}$. By Lemmas~\ref{fgsc} and~\ref{mprojsc}, $P$ is a finitely generated rank one graded projective module.
\end{proof}

\subsection{Morphisms between rank one projectives}

In \cite[\S 3.4.2]{wonpic}, the morphisms between the rank one projectives of $\gr A$ were described in the case that $A = \kk[z](\sigma, f)$ for $\sigma(z) = z+1$ and $f \in \kk[z]$ quadratic. The hypotheses used therein were that $\kk[z]$ is a PID and $A$ is $1$-critical, i.e., $A$ has Krull dimension $1$ and for any proper submodule $B \subseteq A$, $A/B$ has Krull dimension $0$. Since both $\kk[z]$ and $\kk[z, z\inv]$ are PIDs and all of the simple GWAs in this paper are 1-critical, the results generalize immediately. We briefly review the pertinent results and definitions from \cite{wonpic}.

\begin{definition}Given a graded rank one projective module $P$ with structure constants $\{c_i\}$, the \emph{canonical representation of $P$} is the module 
\[P' = \bigoplus_{i \in \ZZ} R p_i x^i \subseteq \QgrA
\] where
$p_i = \prod_{j \geq i} c_j$. We note that $P' \cong P$ and call $P'$ a \emph{canonical rank one graded projective module}. Every rank one graded projective right $A$-module is isomorphic to a unique canonical one in this way.
\end{definition}

Let $P$ and $Q$ be finitely generated right $A$-modules. An $A$-module homomorphism $f: P \sra Q$ is called a \emph{maximal embedding} if there does not exist an $A$-module homomorphism $g: P \sra Q$ such that $f(P) \subsetneq g(P)$.
In \cite{wonpic}, it was proved that if $P$ and $Q$ are finitely generated graded rank one projectives, then there exists a maximal embedding $P \sra Q$ which is unique up to a scalar. Further, if $P$ and $Q$ are \emph{canonical} rank one graded projectives, this maximal embedding can be computed explicitly.

\begin{proposition}[{\cite[Proposition 3.38]{wonpic}}] \label{projhom} Let $P$ and $Q$ be finitely generated graded rank one projective $A$-modules embedded in $\QgrA$. Then every homomorphism $P \sra Q$ is given by left multiplication by some element of $\kk(z)$ and as a left $R$-module, $\Hom_{\gr A}(P,Q)$ is free of rank one.
\end{proposition}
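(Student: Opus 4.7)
The plan is to pass to the graded quotient division ring $\QgrA = \kk(z)[x,x^{-1};\sigma]$ and then describe $\Hom_{\gr A}(P,Q)$ as a fractional $R$-ideal inside $\kk(z)$.

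First I would show every $f \in \Hom_{\gr A}(P,Q)$ is left multiplication by some $q \in \kk(z)$. Since $P$ and $Q$ are rank one torsion-free right $A$-modules embedded in $\QgrA$, the multiplication maps $P \otimes_A \QgrA \to \QgrA$ and $Q \otimes_A \QgrA \to \QgrA$ are graded isomorphisms of right $\QgrA$-modules. Consequently $f$ extends to a graded degree-zero right-$\QgrA$-module endomorphism $\tilde f$ of $\QgrA$, which is necessarily left multiplication by a unique homogeneous degree-zero element $q \in (\QgrA)_0 = \kk(z)$. Restricting to $P \subseteq \QgrA$ recovers $f$ as left multiplication by $q$.

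Next, writing $P = \bigoplus_{i \in \ZZ} R a_i x^i$ and $Q = \bigoplus_{i \in \ZZ} R b_i x^i$ as in Section~\ref{sec:ideals}, the condition $qP \subseteq Q$ for $q \in \kk(z)$ becomes $qa_i \in Rb_i$ for every $i$, that is, $q \in R b_i a_i^{-1}$. Hence
\[
\Hom_{\gr A}(P,Q) = \bigcap_{i \in \ZZ} R b_i a_i^{-1}
\]
as a left $R$-submodule of $\kk(z)$, the $R$-action being ordinary multiplication in $\kk(z)$.

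Finally, I would invoke Lemma~\ref{fgsc}: the structure constants of both $P$ and $Q$ equal $1$ for $i \gg 0$ and equal $\sigma^i(f)$ for $i \ll 0$. This forces $\{a_i\}$ and $\{b_i\}$ each to stabilize for $i \gg 0$ and to accumulate the same factors $\sigma^j(f)$ for $i \ll 0$, so the common factors cancel in the ratio $b_i a_i^{-1}$ and this ratio is constant outside a finite range of $i$. The intersection above therefore reduces to a finite intersection of nonzero fractional $R$-ideals, so it is itself a nonzero finitely generated fractional ideal of $R$. Because $R$ is a PID, it is a free left $R$-module of rank one. The only mildly delicate step is carefully verifying the two-sided stabilization of $b_i a_i^{-1}$ from the data of Lemma~\ref{fgsc}; everything else is formal localization and PID theory.
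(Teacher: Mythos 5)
Your argument is correct and takes essentially the same route as the cited \cite[Proposition 3.38]{wonpic}: pass to $\QgrA$ to see that a degree-zero graded map is left multiplication by some $q\in(\QgrA)_0=\kk(z)$, identify $\Hom_{\gr A}(P,Q)$ with the fractional $R$-ideal $\bigcap_{i}Rb_ia_i^{-1}$, and use the two-sided stabilization of structure constants from Lemma~\ref{fgsc} to reduce this to a finite intersection of nonzero fractional ideals, hence a free rank-one $R$-module. The only step you leave implicit is that the prescribed left $R$-action $r\cdot m = m\,\sigma^{-i}(r)$ on degree-$i$ elements agrees, inside $\QgrA$, with ordinary left multiplication by $r\in R\subset\kk(z)$, which is exactly what makes the identification of $\Hom_{\gr A}(P,Q)$ with a fractional ideal an isomorphism of left $R$-modules.
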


\begin{lemma}[{\cite[Lemma 3.40 and Corollary 3.41]{wonpic}}] \label{projmaxemb} Let $P$ and $Q$ be rank one graded projective $A$-modules with structure constants $\{c_i\}$ and $\{d_i\}$, respectively. As above, write 
\[ P = \bigoplus_{i \in \ZZ} R p_i  x^i = \bigoplus_{i \in \ZZ} \left( R \prod_{j \geq i} c_j \right)  x^i \mbox{ and } Q = \bigoplus_{i \in \ZZ} Rq_i x^i = \bigoplus_{i \in \ZZ} \left( R \prod_{j \geq i} d_j  \right) x^i.
\]
Then the maximal embedding $P \sra Q$ is given by multiplication by 
\[
\theta_{P,Q} = \lcm_{i \in \ZZ} \left( \frac{q_i}{\gcd(p_i, q_i)}\right)= \lcm_{i \in \ZZ} \left( \frac{\prod_{j \geq i}d_j}{\gcd\left( \prod_{j \geq i}c_j, \prod_{j \geq i} d_j\right)} \right)
\]
where $\lcm$ is the unique monic least common multiple in $R$. There exists some $N\in\ZZ$ such that
\[ \theta_{P,Q} = \frac{q_N}{\gcd(p_N,q_N)}.
\]
\end{lemma}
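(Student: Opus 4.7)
The plan is to work at the level of structure constants and $\pi$-adic valuations in the PID $R$, using Proposition~\ref{projhom} to parametrize every morphism $P \to Q$ as left multiplication by a unique $\theta \in \kk(z)$. The condition $\theta P \subseteq Q$ is, degree by degree, equivalent to $q_i \mid \theta p_i$ in $R$ for every $i \in \ZZ$; that is, $\theta \in (q_i/p_i) R$ for every $i$. Since two such multiplications $\theta_1, \theta_2$ satisfy $\theta_1 P \subseteq \theta_2 P$ iff $\theta_2 \mid \theta_1$, a maximal embedding corresponds to a generator of the $R$-submodule $\bigcap_{i \in \ZZ} (q_i/p_i) R$ of $\kk(z)$, unique up to a unit, which I normalize to be monic.

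I will then compute this generator via $\pi$-adic valuations. For each irreducible $\pi \in R$,
\[ v_\pi(\theta_{P,Q}) = \max_{i \in \ZZ} \bigl(v_\pi(q_i) - v_\pi(p_i)\bigr). \]
By Lemma~\ref{fgsc}, $c_i = d_i = 1$ and hence $p_i = q_i = 1$ for $i$ sufficiently large, so this maximum is always non-negative and therefore agrees with $\max_i \max\bigl(0, v_\pi(q_i) - v_\pi(p_i)\bigr) = v_\pi\bigl(\lcm_i (q_i / \gcd(p_i, q_i))\bigr)$. This yields the first equality; the second follows by substituting $p_i = \prod_{j \geq i} c_j$ and $q_i = \prod_{j \geq i} d_j$.

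For the final assertion about the existence of $N$, I will use simplicity of $A$ together with Lemma~\ref{mprojsc}: each $c_j$ (resp.\ $d_j$) factors as $\prod_{\m \in I_j} \sigma^j((z-\m)^{n_\m})$ for some $I_j \subseteq \Z$ (resp.\ $J_j \subseteq \Z$). Under Hypothesis~\ref{hyp} the roots of $f$ lie on distinct $\sigma_\kk$-orbits, so the irreducible polynomials $\sigma^j(z-\m)$ indexed by different pairs $(\m, j)$ are pairwise coprime in $R$. Hence each valuation $v_{\sigma^j(z-\m)}(p_i)$ equals $n_\m$ when $i \leq j$ and $\m \in I_j$, and vanishes otherwise, with the analogous statement for $q_i$ and $J_j$. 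By Lemma~\ref{fgsc} the set $\{j : I_j \neq J_j\}$ is finite; I let $N$ be any integer at most the minimum of this finite set (or arbitrary if it is empty) and verify, one pair $(\m, j)$ at a time, that $v_\pi(q_N) - v_\pi(p_N)$ already realizes $v_\pi(\theta_{P,Q})$.

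The main obstacle I anticipate is the bookkeeping in this last step: I need to simultaneously confirm that $N$ is small enough to pick up every excess factor $\sigma^j((z-\m)^{n_\m})$ with $\m \in J_j \setminus I_j$ via $q_N$, while the symmetric ``missing'' factors with $\m \in I_j \setminus J_j$ are absorbed into the gcd with $p_N$ rather than contributing negatively to $q_N/\gcd(p_N, q_N)$. The simplicity of $A$ is exactly what decouples the argument across distinct $\sigma_\kk$-orbits, reducing the verification to finitely many independent one-factor checks.
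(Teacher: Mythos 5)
The paper does not give its own proof of this lemma; it cites \cite[Lemma~3.40 and Corollary~3.41]{wonpic} and simply notes that the arguments there ``generalize immediately'' since $R$ is a PID and $A$ is $1$-critical. Your reconstruction is correct and, as far as I can tell, recovers the argument of \cite{wonpic} faithfully: parametrize $\Hom_{\gr A}(P,Q)$ by multipliers $\theta\in\kk(z)$ via Proposition~\ref{projhom}, translate $\theta P\subseteq Q$ into the degree-by-degree divisibility $q_i\mid\theta p_i$, identify the maximal embedding with the monic generator of the cyclic $R$-module $\bigcap_i(q_i/p_i)R$, and compute that generator prime by prime. The identity $v_\pi\bigl(q_i/\gcd(p_i,q_i)\bigr)=\max\bigl(0,\,v_\pi(q_i)-v_\pi(p_i)\bigr)$ together with the fact (via Lemma~\ref{fgsc}) that $p_i=q_i=1$ for $i\gg 0$ makes the truncation by $\max(0,\cdot)$ harmless, giving the $\lcm$ formula. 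Your treatment of the final claim is also sound: by Lemma~\ref{mprojsc} every $c_j$ and $d_j$ is a product of factors $\sigma^j\bigl((z-\m)^{n_\m}\bigr)$, and under Hypothesis~\ref{hyp} the irreducibles $\sigma^j(z-\m)$ are pairwise distinct across the index set, so each valuation is governed by exactly one pair $(\m,j)$; since $I_j=J_j$ outside a finite set of $j$'s, taking $N$ at most the minimum of that set realizes every excess valuation simultaneously in the single quantity $q_N/\gcd(p_N,q_N)$. The one small thing worth making fully explicit is that $v_\pi(p_i),v_\pi(q_i)\le n_{\m_0}$ for $\pi=\sigma^{j_0}(z-\m_0)$ (since $\pi$ divides at most one structure constant), which guarantees the $\sup$ over $i$ is attained and the intersection $\bigcap_i(q_i/p_i)R$ is indeed a nonzero cyclic module --- but that is immediate from your own setup.
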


Finally, we give necessary and sufficient conditions for a set of projective objects in $\gr A$ to generate the category. This is the natural generalization of the conditions in \cite{wonpic} and follows from the same proof.

\begin{lemma}[{\cite[Proposition 3.43]{wonpic}}]\label{projgen} A set of rank one graded projective $A$-modules $\mcP = \{P_i\}_{i \in I}$ generates $\gr A$ if and only if for every $M_\m^\pm\langle n\rangle$ with $\m\in\Z$ and $n\in\ZZ$, there exists a graded surjection to $M_\m^\pm\langle n\rangle$ from a direct sum of modules in $\mcP$.
\end{lemma}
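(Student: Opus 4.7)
The forward direction is immediate from the definition of generation: if $\mcP$ generates $\gr A$, then every simple module, including each $M_\m^\pm\s{n}$, is a quotient of a direct sum of modules in $\mcP$.

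For the reverse direction, the plan has two steps: first, extend the surjection hypothesis from the simples $M_\m^\pm\s{n}$ to all simples of $\gr A$; second, use projectivity to deduce that every finitely generated module is a quotient of a direct sum of modules in $\mcP$.

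In the first step, by Lemma~\ref{lem.simples} together with Hypothesis~\ref{hyp}, the remaining simples are the $M_\lambda\s{n}$ with $\lambda$ on a nondegenerate orbit and $n \in \ZZ$. I will show that every rank one graded projective $P \in \gr A$ surjects onto every such $M_\lambda\s{n}$; combined with the given hypothesis, this produces surjections onto all simples from direct sums of $\mcP$-objects. Writing $P = \bigoplus_i R a_i x^i \subseteq \QgrA$ with structure constants $\{c_i\}$, for each $\gamma \in \cO_\lambda$ I define the subspace $K_\gamma = \bigoplus_i R(z-\gamma) a_i x^i$. A direct check using $a_{i+1} \mid a_i$ and $c_{i-1} \mid \sigma^{i-1}(f)$ (Lemma~\ref{fgsc}) shows $K_\gamma$ is a graded right $A$-submodule of $P$. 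Each graded piece of $P/K_\gamma$ is one-dimensional with left $R$-annihilator $(z-\gamma)R$, so $P/K_\gamma$ is supported at $\gamma$. Since $\gamma$ is nondegenerate, $(z-\gamma) \nmid c_i$ for every $i$, so $x$ and $y$ act nontrivially between successive graded pieces of $P/K_\gamma$, making it simple. By Lemma~\ref{lem.simples}, the simple supported at $\gamma$ is unique up to isomorphism; taking $\gamma = \sigma_\kk^{-n}(\lambda)$ then identifies $P/K_\gamma \cong M_\lambda\s{n}$, giving the desired surjection.

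In the second step, given $M \in \gr A$, let $\Phi(M) \subseteq M$ be the sum of the images of all graded morphisms from direct sums of objects of $\mcP$ into $M$. If $\Phi(M) \subsetneq M$, then the nonzero finitely generated quotient $M/\Phi(M)$ has a simple quotient $S$. By the first step, there is a graded surjection $\pi : \bigoplus_j P_{i_j} \twoheadrightarrow S$ with each $P_{i_j} \in \mcP$, and projectivity of $\bigoplus_j P_{i_j}$ lifts $\pi$ through $M \twoheadrightarrow M/\Phi(M) \twoheadrightarrow S$ to a morphism $\tilde{\pi} : \bigoplus_j P_{i_j} \to M$. But $\im \tilde{\pi} \subseteq \Phi(M)$ by construction, contradicting the surjectivity of the composition onto $S$. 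Hence $\Phi(M) = M$, so $\mcP$ generates $\gr A$. The hard part is the first step: verifying that $P/K_\gamma$ is the unique simple supported at $\gamma$ requires careful handling of the shifted left $R$-module structure on graded right $A$-modules, and the nondegeneracy of $\gamma$ is essential both for making $P/K_\gamma$ simple and for identifying its isomorphism class.
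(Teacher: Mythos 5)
Your proof is correct and follows essentially the same strategy that the paper delegates to \cite[Proposition 3.43]{wonpic}: show that rank one projectives automatically surject onto all simples supported on nondegenerate orbits (so the only real constraint is hitting the degenerate simples $M\pma\s{n}$), then run a standard projectivity argument to pass from simples to arbitrary finitely generated modules. The $K_\gamma = \bigoplus_i R(z-\gamma)a_i x^i$ construction, the verification that nondegeneracy of $\gamma$ forces $(z-\gamma) \nmid c_i$ and hence that $x$ and $y$ act bijectively on $P/K_\gamma$, and the $\Phi(M)$-lifting argument are all sound.
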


\subsection{\texorpdfstring{Involutions of $\gr A$}{Involutions of gr-A}}
\label{sec:pic}

We now construct autoequivalences of $\gr A$ which are analogous to those constructed in \cite[Proposition 5.7]{sierra} and \cite[Propositions 5.13 and 5.14]{wonpic}. Recall that $\mcS$ denotes the shift functor on $\gr A$.

\begin{proposition}\label{prop.iotas}
Let $A = R(\sigma,f)$ and let $\Z$ be the set of roots of $f$. For any $\m \in \Z$ and any $j \in \ZZ$, there is an autoequivalence $\iota^{\m }_{j}$ of $\gr A$ such that $\iota^{\m}_{j}(M\pma\s{j}) \cong M\mpa\s{j}$ and $\iota^{\m}_{j}(S) \cong S$ for all other graded simple $A$-modules $S$. For any $j, k \in \ZZ$, $\mcS_A^j \iota^{\m}_{k} \cong \iota^{\m}_{j+k}\mcS_A^j$, and $(\iota^{\m}_{j})^2 \cong \Id_{\gr A}$.
\end{proposition}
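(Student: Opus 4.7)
The plan is to build $\iota^{\m}_{j}$ first on the full subcategory $\mcP \subseteq \gr A$ of rank one graded projective modules, then extend to all of $\gr A$ using that $\mcP$ generates by Lemma~\ref{projgen}. On objects, each rank one graded projective $P$ is determined up to isomorphism by a collection of subsets $\{J_i \subseteq \Z\}_{i \in \ZZ}$ encoding its structure constants $c_i = \prod_{\m' \in J_i}\sigma^i((z-\m')^{n_{\m'}})$ (Lemma~\ref{mprojsc}), where $J_i = \emptyset$ for $i \gg 0$ and $J_i = \Z$ for $i \ll 0$ (Lemma~\ref{fgsc}). I would define $\iota^{\m}_{j}(P)$ as the unique (by Corollary~\ref{projfactors}) rank one graded projective whose associated subsets $\{J'_i\}$ agree with $\{J_i\}$ except that $\m$ is toggled in or out of $J_{-j}$; existence is guaranteed by Lemma~\ref{anyproj}. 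By Lemma~\ref{cssc}, $\iota^{\m}_{j}(P)$ and $P$ share the same unique simple factor supported at $\sigma_{\kk}^n(\m')$ for every pair $(\m',n) \neq (\m,-j)$, while at position $(\m,-j)$ the factor is swapped between $M\ma\s{j}$ and $M\pa\s{j}$.

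For morphisms I would use Proposition~\ref{projhom}: between any two rank one graded projectives $P$ and $Q$, $\Hom_{\gr A}(P,Q)$ is free of rank one as a left $R$-module, with canonical generator the maximal embedding given by the explicit $\lcm$ formula in Lemma~\ref{projmaxemb}. I would define $\iota^{\m}_{j}$ on this generator by sending it to the maximal embedding $\iota^{\m}_{j}(P) \to \iota^{\m}_{j}(Q)$, and extend $R$-linearly. The key technical check is that this assignment respects composition: this can be verified by a direct computation using Lemma~\ref{projmaxemb}, observing that simultaneously introducing or removing the factor $\sigma^{-j}((z-\m)^{n_{\m}})$ in the structure constants of source and target changes the $\lcm$ expression for the maximal embedding by the same factor in a way that is preserved under products. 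Applied twice, the toggle returns to the original data, giving $(\iota^{\m}_{j})^2 \iso \Id$ on $\mcP$ up to canonical natural isomorphism.

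To extend from $\mcP$ to $\gr A$, I would use that every $M \in \gr A$ admits a presentation $P_1 \to P_0 \to M \to 0$ with $P_0, P_1$ direct sums of rank one graded projectives (available via Lemma~\ref{projgen}), and define $\iota^{\m}_{j}(M)$ as the cokernel of $\iota^{\m}_{j}(P_1) \to \iota^{\m}_{j}(P_0)$. The standard argument for extending an equivalence defined on a projective generating subcategory shows that $\iota^{\m}_{j}$ is well-defined up to natural isomorphism and yields an exact autoequivalence of $\gr A$, with inverse $\iota^{\m}_{j}$ itself by the involutive property above.

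The remaining claims are essentially bookkeeping. For a simple $S$ supported at any point other than $\sigma_{\kk}^{-j}(\m)$, or for $S = M_\lambda\s{n}$ coming from a nondegenerate orbit, the relevant structure constants in a minimal projective presentation of $S$ do not touch position $-j$ at root $\m$ (by Lemmas~\ref{lem.simples} and~\ref{cssc}), so $\iota^{\m}_{j}(S) \iso S$. The identity $\mcS_A^{j'} \iota^{\m}_{k} \iso \iota^{\m}_{j'+k}\mcS_A^{j'}$ reduces to tracking how the shift functor reindexes the sequences $\{J_i\}$: shifting by $j'$ translates the index sequence, moving the toggle position from $-k$ to $-(k+j')$. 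The principal obstacle is the functoriality check in the second paragraph; all other steps are either structural (generation and presentation) or straightforward index bookkeeping.
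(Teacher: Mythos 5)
Your proposal is correct in outline and shares the same skeleton as the paper's argument (define the functor on rank-one graded projectives, then extend to $\gr A$), but it uses a genuinely different mechanism for the action on morphisms, and that mechanism is where the real work lies.

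The paper constructs $\iota^{\m}_{0}(P)$ as a concrete \emph{submodule} of $P$, namely the kernel of the unique (up to scalar) surjection $P \to \mcMma$ or $P \to \mcMpa$, and lets $\iota^{\m}_{0}$ act on morphisms by \emph{restriction}. Functoriality is then automatic once one knows that morphisms carry $\iota^{\m}_{0}(P)$ into $\iota^{\m}_{0}(Q)$, which is handled by the closure of $\sfD_\m$ under subobjects and \cite[Lemma 4.4]{wonpic}. The paper further sets $\iota^{\m}_{j} := \mcS_A^j \iota^{\m}_{0}\mcS_A^{-j}$, so the shift-intertwining relation is built into the definition. You instead define $\iota^{\m}_{j}(P)$ abstractly (by toggling the structure constant at position $-j$, then invoking Lemma~\ref{anyproj} and Corollary~\ref{projfactors}), and define the morphism map by sending the generator $\theta_{P,Q}$ of the free rank-one left $R$-module $\Hom_{\gr A}(P,Q)$ to $\theta_{\iota P, \iota Q}$ and extending $R$-linearly. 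This also works, but the functoriality check you flag as the ``principal obstacle'' is genuinely the crux: since the toggle shifts $\theta_{P,Q}$ by the ratio $\mu_{P,Q} = \theta_{\iota P, \iota Q}/\theta_{P,Q}$, which is a (possibly negative) power of the prime $z - \sigma_\kk^{j}(\m)$, one must verify the cocycle identity $\mu_{Q,S}\,\mu_{P,Q} = \mu_{P,S}$ using Lemma~\ref{projmaxemb}. This does hold (the exponent of the relevant prime in $\mu_{P,Q}$ is $n_\m$ times the difference of indicator functions recording whether $\m$ lies in the degree-$(-j)$ toggle set of $P$ versus $Q$, and these telescope), but the verification is the content of the proof and should not be left at the level of assertion. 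Note also that since structure constants only determine a rank-one projective up to isomorphism, you must fix representatives (e.g., canonical ones, as in Definition~\ref{wonferpic}) before your object-level rule defines an actual functor; the paper's kernel construction sidesteps this. Your resulting functor differs from the paper's by a natural isomorphism (given on objects by multiplication by a suitable power of $z-\sigma_\kk^j(\m)$), so both satisfy the proposition. Finally, one should be aware that the paper's restriction-based definition comes bundled with the identity $(\iota^{\m}_{0})^2 P = (z-\m)^{n_\m} P$ as literal equality of submodules, which it uses (via Proposition~\ref{projhom}) to show $\iota^{\m}_{0}$ is its own quasi-inverse; with your $R$-linear-extension definition this is instead an immediate consequence of the toggle being an involution on structure constants.
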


\begin{proof}
For each $\m \in \Z$, we will construct $\iota^{\m}_{0}$ and then define $\iota^{\m}_{j}$ as $\mcS_A^j \iota^{\m}_{ 0} \mcS_A^{-j}$. 

Let $\sfR$ denote the full subcategory of $\gr A$ consisting of the canonical rank one graded projective modules. We first define $\iota^{\m}_{0}$ on $\sfR$. Consider the set
\[ D_\m = \left\{ M\mai, M\pai \st i = 1, \dots, n_{\m }\right\}\]
and let $\sfD_\m$ denote the full subcategory of $\gr A$ whose objects are the elements of $D_\m$. 

Let $P$ be an object of $\sfR$. By Lemma~\ref{lem.lifts} and Corollary~\ref{projXY}, there is an unique (up to a scalar) surjection from $P$ onto $\mcMma$ or $\mcMpa$, let $N$ be the kernel of this surjection (which does not depend on the particular choice of the surjection), so $P/N\in \sfD_\m$. As $\sfD_\m$ is closed under subobjects, the functor $\sfR\sra\gr A$ that maps an object $P$ to this unique kernel and acts on morphisms by restriction is a well-defined additive functor by \cite[Lemma 4.4]{wonpic}.  By  \cite[Lemma 4.3]{wonpic}, this then extends to an additive functor defined over the full subcategory of direct sums of rank one projective modules. By  \cite[Lemma 4.2]{wonpic} this functor further extends to an additive functor $\iota^{\m}_{ 0}: \gr A \sra \gr A$. 

We now show that $\iota^{\m}_{ 0}$ has the claimed properties. We begin by describing the action on structure constants. Suppose $P \in \sfR$ has structure constants $\{c_i\}$ and denote the structure constants of $\iota^{\m }_0 P$ by $\{d_i\}$. By Lemma~\ref{cssc}, $P$ surjects onto exactly one of $\mcMma$ and $\mcMpa$. If $P$ surjects onto $\mcMma$, then a similar strategy to the one used in the first paragraph of the proof of Lemma~\ref{cssc} can be used to show that
\[ (\iota^{\m}_{0}P)_n = \begin{cases} (z- \m )^{n_{\m }} P_n & \mbox{ if } n \leq 0 \\
P_n & \mbox{ if } n > 0.
\end{cases}
\]
Similarly, if $P$ surjects onto $\mcMpa$ then
\[ (\iota^{\m}_{0}P)_n = \begin{cases} P_n & \mbox{ if } n \leq 0 \\
(z-\m )^{n_{\m }} P_n & \mbox{ if } n > 0.
\end{cases}
\]
In the first case, $d_0 = c_0 (z-\m )^{n_{\m }}$ while in the second case, $d_0 = c_0/(z-\m  )^{n_{\m }}$. Therefore by Lemma~\ref{cssc} if $P$ surjects onto $M^{\pm n_\m}_\m$ then $\iota^{\m}_0P$ surjects onto $M^{\mp n_\m}_\m$.
Hence $(\iota^{\m}_{ 0})^2 P = (z-\m )^{n_{\m }}P$, which is isomorphic to $P$. 

Let $P'$ be another object in $\sfR$. Consider map induced by $(\iota_0^\m)^2$
\[
 \Hom_{\gr A}(P, P')\rightarrow\Hom_{\gr A}((z-\m )^{n_{\m }} P, (z-\m )^{n_{\m }} P') ,
\]
given by $g\mapsto g|_{(z-\m)^{n_\m}P}$. By Proposition~\ref{projhom}, every element of $ \Hom_{\gr A}(P, P')$ and of $\Hom_{\gr A}((z-\m )^{n_{\m }} P, (z-\m )^{n_{\m }} P')$ is given by left multiplication by some element of $\kk(z)$. Therefore the map in the above display must be an isomorphism. If $Q$ is a finite direct sum of rank one graded projective modules then, by the additivity of $\iota^\m_0$, $(\iota^{\m}_{ 0})^2$ is given by multiplication by $(z-\m )^{n_{\m }}$ in each component of $Q$. Hence, $(\iota^{\m}_{ 0})^2$ is naturally isomorphic to the identity functor on the full subcategory of finite direct sums of rank one projectives. Therefore, by \cite[Lemma 4.2]{wonpic}, $\iota^{\m}_{ 0}$ is an autoequivalence of $\gr A$ which is its own quasi-inverse.

If $\iota^\m_j$ is defined as $\iota^{\m}_{j} = \mcS_A^j \iota^{\m}_{ 0} \mcS_A^{-j}$ then
\begin{align*}
\mcS^j_A\iota^\m_k&=\mcS^j_A\mcS^k_A\iota^\m_0\mcS^{-l}_A\\
&=\mcS^{j+k}_A\iota^\m_0\mcS^{-j-k}_A\mcS^j_A\\
&=\iota^\m_{j+k}\mcS^j_A.
\end{align*}

Because for any $P \in \sfR$ the structure constants for $\iota^{\m}_{ 0} P$ differ from those of $P$ only in degree $0$, where they differ only by a factor of $(z-\m )^{n_{\m }}$, by Lemma~\ref{cssc}, $\iota^{\m}_{ 0} P$ and $P$ have the same simple factors supported along $\{\sigma^{i}_\kk(\m ) \mid i \in \mathbb{Z}, \m \in \Z\}$ except if $P$ has a factor of $M\ma\s{j}$ then $\iota^{\m}_{ 0} P$ has a factor of $M\pa\s{j}$ and vice versa. Let $S$ be a simple module not of the form $M\pma\s{j}$, i.e. $S$ is $M_\beta^\pm\s{j}$ for some $\beta\in\Z\backslash\{\m\}$, or $S$ is $M_\lambda\s{j}$ for some $\lambda$. By looking at the way $\iota^\m_0$ is defined on $\gr A$, in \cite[Lemmas 4.2 and 4.3]{wonpic}, it follows that $\iota^\m_0(S)=S$.
\end{proof}

\begin{remark}
\label{rem.iotas}
Since we defined $\iota^{\m}_{j} = \mcS^j_A \iota^{\m}_{0} \mcS^{-j}_A$, we can construct $\iota^{\m}_{j}$ by adjusting the construction in the previous proof by shifting all of the modules in $D$ by $j$. It follows that for a rank one graded projective module $P$, $(\iota^{\m }_{j})^2 P = \sigma^{-j}\left((z-\m )^{n_\m }\right) P$. 
\end{remark}

\begin{lemma} \label{lem.pic}
For any $\m, \beta \in \Z$ and any $j, k \in \ZZ$, $\iota^{\m}_{j} \iota^{\beta}_{k} = \iota^{\beta}_{k} \iota^{\m}_{j}$ and so the autoequivalences $\{ \iota^{\m}_{j} \mid \m \in \Z, j\in \ZZ \}$ generate a subgroup of $\Pic(\gr A)$ isomorphic to 
\[\bigoplus_{\m \in \Z} \bigoplus_{j \in \ZZ} \ZZ/2\ZZ.\]
\end{lemma}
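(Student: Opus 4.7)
My plan is first to establish pairwise commutativity of the $\iota^{\m}_j$'s, and then to verify that the canonical map
\[
\bigoplus_{\m \in \Z} \bigoplus_{j \in \ZZ} \ZZ/2\ZZ \ra \Pic(\gr A)
\]
sending the standard generator at coordinate $(\m,j)$ to the class of $\iota^{\m}_j$ is both well-defined and injective; surjectivity onto the named subgroup is then automatic.

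For commutativity, the case $(\m,j) = (\beta,k)$ is immediate from $(\iota^{\m}_j)^2 \cong \Id_{\gr A}$ in Proposition~\ref{prop.iotas}. In the remaining case $(\m,j) \neq (\beta,k)$, I would verify $\iota^{\m}_j \iota^{\beta}_k \cong \iota^{\beta}_k \iota^{\m}_j$ on the full subcategory $\sfR$ of canonical rank one graded projectives and then lift the natural isomorphism to all of $\gr A$ by \cite[Lemmas 4.2 and 4.3]{wonpic}. The key computation, drawn from the proof of Proposition~\ref{prop.iotas} together with Remark~\ref{rem.iotas}, is that for $P \in \sfR$ the functor $\iota^{\m}_j$ alters exactly one structure constant of $P$ -- the one in a degree determined by $j$ alone -- by multiplying or dividing by $\sigma^{-j}((z-\m)^{n_\m})$, with the choice of multiplication versus division dictated by which of $M^{-}_\m\s{j}$ or $M^{+}_\m\s{j}$ appears as a factor of $P$ via Lemma~\ref{cssc}. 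When $(\m,j) \neq (\beta,k)$, either the two operators affect different degrees (if $j \neq k$), or they affect the same degree but via the coprime factors $\sigma^{-j}((z-\m)^{n_\m})$ and $\sigma^{-j}((z-\beta)^{n_\beta})$. In the latter subcase, since $(z-\m)$ and $(z-\beta)$ are coprime in $R$, applying one operator preserves the divisibility test in Lemma~\ref{cssc} that governs the other operator's multiply-versus-divide decision. Hence the two composites perform identical modifications inside the commutative field $\kk(z)$ and produce the same submodule of $\QgrA$. On morphisms of $\sfR$, Proposition~\ref{projhom} identifies $\Hom_{\gr A}(P,P')$ with multiplication by elements of $\kk(z)$, and because each $\iota^{\m}_j$ acts on morphisms by restriction, the two composites agree on $\Hom$-sets as well.

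For injectivity of the map from $\bigoplus_{\m \in \Z}\bigoplus_{j \in \ZZ} \ZZ/2\ZZ$, a nontrivial element gives a finite product $\iota^{\m_1}_{j_1} \cdots \iota^{\m_r}_{j_r}$ with distinct pairs $(\m_i, j_i)$, and I would evaluate it on the simple module $M^{-}_{\m_1}\s{j_1}$. By Proposition~\ref{prop.iotas}, the factor $\iota^{\m_1}_{j_1}$ sends $M^{-}_{\m_1}\s{j_1}$ to $M^{+}_{\m_1}\s{j_1}$, while each remaining $\iota^{\m_i}_{j_i}$ fixes $M^{-}_{\m_1}\s{j_1}$. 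The two simples $M^{-}_{\m_1}\s{j_1}$ and $M^{+}_{\m_1}\s{j_1}$ are nonisomorphic in $\gr A$, since one is supported only in sufficiently negative degrees and the other only in sufficiently positive degrees; therefore the composite cannot be naturally isomorphic to $\Id_{\gr A}$.

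The main obstacle I anticipate is the commutativity argument in the subcase $j = k$, $\m \neq \beta$, where the two operators both modify the same structure constant; the multiply-versus-divide bookkeeping for each operator must be shown to be stable under the other via a careful reapplication of Lemma~\ref{cssc}. Once this subcase is settled, the remainder of the proof is essentially formal.
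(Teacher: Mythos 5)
Your proposal is correct and follows the same underlying strategy the paper invokes (tracing commutativity through the structure-constant description of $\iota^{\m}_j$ from the proof of Proposition~\ref{prop.iotas}), but you flesh out two points the paper leaves nearly implicit. First, the paper's proof of commutativity is a single sentence (``tracing through the construction...''); your case split $j \neq k$ versus $j = k$, $\m \neq \beta$, with the observation that the coprimality of $z-\m$ and $z-\beta$ in $R$ makes the multiply-versus-divide test from Lemma~\ref{cssc} for one operator stable under the other, is exactly the missing content of that sentence. Second, and more substantively, the paper's proof establishes only that each generator has order $2$ and that the generators commute, which shows the subgroup is a \emph{quotient} of $\bigoplus_{\m}\bigoplus_j \ZZ/2\ZZ$; it never verifies that no nontrivial product of distinct generators is naturally isomorphic to $\Id_{\gr A}$. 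Your injectivity argument --- evaluate a nonempty product $\iota^{\m_1}_{j_1}\cdots\iota^{\m_r}_{j_r}$ on $M^-_{\m_1}\s{j_1}$, use the action on simples from Proposition~\ref{prop.iotas} to see the image is $M^+_{\m_1}\s{j_1}$, and note these two simples are nonisomorphic since their supports lie in disjoint half-lines of $\ZZ$ --- supplies this step cleanly. One small wording caveat: the remaining $\iota^{\m_i}_{j_i}$ need to fix both $M^-_{\m_1}\s{j_1}$ and $M^+_{\m_1}\s{j_1}$, not just the first; this is true by Proposition~\ref{prop.iotas} but worth stating, since the argument does not depend on placing $\iota^{\m_1}_{j_1}$ at the outside of the composite.
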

\begin{proof}Since, for each $\m \in \Z$ and $j \in \ZZ$, $(\iota^{\m}_{j})^2 \cong \Id_{\gr A}$, each $\iota^{\m}_{j}$ generates a subgroup of $\Pic(\gr A)$ isomorphic to $\ZZ/2\ZZ$. It remains to show that for any $\beta \in \Z$ and $k \in \ZZ$ that $\iota^{\m}_{j}$ and $\iota^{\beta}_{k}$ commute. Tracing through the construction in the proof of Proposition~\ref{prop.iotas} shows that $(\iota^{\beta}_{k})\inv (\iota^{\m}_{j})\inv \iota^{\beta}_{k} \iota^{\m}_{j} \cong \Id_{\gr A}$.
\end{proof}

\section{Quotient stacks as noncommutative schemes of GWAs}
\label{sec.comm}

Let $A = R(\sigma,f)$ and fix a labeling of the distinct roots of $f$ 
\[ \Z = \{\m_1, \dots, \m_r\}
\]
where $\m_i $ has multiplicity $n_i$ in $f$.
The main aim of this section is to use the autoequivalences constructed in the previous section to construct a $\Gamma$-graded commutative ring $B$ whose category of $\Gamma$-graded modules is equivalent to $\Gr A$. We will then study properties of the ring $B$.

We identify $\bigoplus_{i \in \ZZ} \ZZ/2\ZZ$ with the group $\ZZ_\fin$ of finite subsets of the integers. 
The operation on $\ZZ_\fin$ is given by exclusive or, denoted $\oplus$. 
For convenience, we use simply $j$ to denote the singleton set $\{j \} \in \ZZ_\fin$. Throughout, we write the group of autoequivalences described in Lemma~\ref{lem.pic} as
\[ \Gamma = \bigoplus_{i = 1}^r \ZZ_\fin.
\]
We note that $\ZZ_\fin \oplus \ZZ_\fin \cong \ZZ_\fin$ and therefore $\Gamma \cong \ZZ_\fin$, but it will be convenient to index this group by the roots of $f$. We use the notation $[1,r]$ to refer to the set $\{1,2, \dots, r\}$.

For $i \in [1,r]$ and $j \in \ZZ$, let $\ule_{i,j} = \{\emptyset, \dots, j, \dots, \emptyset\} \in \Gamma$, where $j$ is in the $i^\th$ component.
We denote the identity $(\emptyset, \dots, \emptyset)$ of $\Gamma$ by $\emptyset$.

For each $\ulJ = (J_1, J_2,  \dots, J_r) \in \Gamma$, let
\[ \iota_{\ulJ} = \prod_{i = 1}^{r} \prod_{j \in J_i}  \iota^{\m_i}_{j}.
\]
Since, for each $j \in \ZZ$, the autoequivalence $\iota^{\m_i}_{ j}$ is its own quasi-inverse, therefore $\iota_{\ulJ}$ is also its own quasi-inverse.

\begin{lemma}\label{lem.iotasgen} 
The set $\{ \iota_{\ulJ} A \mid \ulJ \in \Gamma\}$ generates $\Gr A$.
\end{lemma}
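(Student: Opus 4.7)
The plan is to invoke Lemma~\ref{projgen}. Since each $\iota_{\ulJ}$ is an autoequivalence of $\gr A$ and $A$ is itself a rank one graded projective module, every $\iota_{\ulJ}A$ is again a rank one graded projective; this is implicit in Proposition~\ref{prop.iotas} and Lemma~\ref{anyproj}, which record the effect on structure constants explicitly. So the criterion of Lemma~\ref{projgen} reduces to the following: for every $\m_i \in \Z$ and every $n \in \ZZ$, both $M^-_{\m_i}\s{n}$ and $M^+_{\m_i}\s{n}$ must arise as graded quotients of some $\iota_{\ulJ}A$. Generation of $\Gr A$ then follows from generation of $\gr A$ by a standard argument: any nonzero module in $\Gr A$ contains a nonzero finitely generated submodule, which admits a nonzero morphism from a member of our family.

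First I would pin down the composition factors of $A$ itself. Its structure constants are $b_k = 1$ for $k \geq 0$ and $b_k = \sigma^k(f)$ for $k < 0$. A direct divisibility check via Lemma~\ref{cssc} (with $j = 1$) then shows that $A$ surjects onto $M^-_{\m_i}\s{n}$ exactly when $n \leq 0$, and onto $M^+_{\m_i}\s{n}$ exactly when $n > 0$. Whenever the desired sign matches this default pattern, I would take $\ulJ = \emptyset$, so that $\iota_{\emptyset}A = A$ is itself the required generator.

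For the complementary cases, I would take $\ulJ = \ule_{i,n}$, so that $\iota_{\ulJ} = \iota^{\m_i}_n$. By Proposition~\ref{prop.iotas}, $\iota^{\m_i}_n$ interchanges $M^+_{\m_i}\s{n}$ with $M^-_{\m_i}\s{n}$ and fixes every other simple up to isomorphism. The explicit description in the proof of Proposition~\ref{prop.iotas} of the effect of $\iota^{\m_i}_n$ on the structure constants of a rank one projective then shows that $\iota^{\m_i}_n A$ is a rank one graded projective whose unique simple factor at the slot $(\m_i, n)$ is the ``flipped'' one. Corollary~\ref{projfactors} (or Lemma~\ref{cssc} applied to the new structure constants) then yields the required surjection onto $M^{\pm}_{\m_i}\s{n}$, completing the list of cases.

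The only thing requiring genuine verification is the claim that $\iota^{\m_i}_n A$ has the advertised composition-factor pattern, and this follows directly from the formula for $\iota^{\m}_0$ on structure constants in the proof of Proposition~\ref{prop.iotas}, combined with Lemma~\ref{cssc}. I do not anticipate a substantial obstacle: the lemma is really an assembly of tools already developed in Sections~\ref{sec:ideals} and~\ref{sec:pic}, and its role here is to package them into the statement needed for the equivalence with a commutative graded ring in the next step.
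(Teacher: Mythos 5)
Your proof is correct and follows essentially the same line as the paper's: both reduce to Lemma~\ref{projgen}, observe that $\iota_{\ulJ}A$ remains a rank one graded projective, and then use $\iota_{\ule_{i,n}}$ to flip the unique simple factor of $A$ supported at $\sigma_{\kk}^n(\m_i)$. The paper's version is slightly terser (invoking Corollary~\ref{projfactors} to say $A$ hits exactly one of $M^\pm_{\m_i}\s{n}$ without computing which one), whereas you explicitly determine the default pattern from the structure constants of $A$; this extra computation is harmless and correct.
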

\begin{proof} We remark that by the construction in Proposition~\ref{prop.iotas},  if $P$ is a rank one projective, then $\iota_{\ulJ} P$ is a graded submodule of $\QgrA$ with the same structure constants as $P$ except in certain degrees, where the structure consants of $\iota_{\ulJ} P$ differ from those of $P$ by a factor of $\prod_{i \in I} (z- \m _i)^{n_{i}}$ for some $I \subseteq \{1, 2, \dots r\}$. Hence, by Lemma~\ref{mprojsc}, $\iota_{\ulJ} P$ is also a rank one graded projective module. By Lemma~\ref{projgen}, it is enough to show that for every $i = 1, \dots, r$ and every $n \in \ZZ$, there is a surjection from some $\iota_{\ulJ}A$ to $M_{\alpha_i}^-\s{n}$ and similarly for $M_{\alpha_i}^+\s{n}$. By Corollary~\ref{projfactors}, $A$ surjects onto exactly one of $M_{\alpha_i}^- \s{n}$ and $M_{\alpha_i}^+ \s{n}$. Then $\iota_{\ule_{i,n}}A$ surjects onto the other, so $\{ \iota_{\ulJ} A \mid \ulJ \in \Gamma\}$ generates $\gr A$ and hence generates $\Gr A$.
\end{proof}

For $\ulJ = (J_1, \dots, J_r), \ulK = (K_1, \dots, K_r) \in \Gamma$, we define
\[ \ulJ \cap \ulK = (J_1 \cap K_1, \dots, J_r \cap K_r) \quad\text{and}\quad  \ulJ \cup \ulK = (J_1 \cup K_1, \dots, J_r \cup K_r),
\]
in the natural way, and similarly define
\[ \ulJ \oplus \ulK = (J_1 \oplus K_1, \dots, J_r \oplus K_r).\]
For each $i = 1, \dots, r$ and each $j \in \ZZ$, we define the polynomial $h^{\m_i}_{j} = \sigma^{-j}\left( (z- \m_i)^{n_i} \right)$. If $\ulJ \in \Gamma$, let
\[h_{\ulJ} = \prod_{i = 1}^{r} \prod_{j \in J_i} h^{\m_i}_{ j}.
\]
For completeness, empty products are defined to be $1$.

Let $\ulI, \ulJ\in \Gamma$. Since, in $\Pic(\gr A)$, $\iota_{\ulI}\iota_{\ulJ} = \iota_{\ulI \oplus \ulJ}$, therefore $\iota_{\ulI}\iota_{\ulJ}$ is naturally isomorphic to $\iota_{\ulI \oplus \ulJ}$ and in particular, $\iota_{\ulI}\iota_{\ulJ}A \cong \iota_{\ulI \oplus \ulJ}A$. We now explicitly describe this isomorphism.
By Remark~\ref{rem.iotas}, $\iota_{\ulJ}^2 A = h_{\ulJ} A$.
Denote by $\tau_{\ulJ}$ the isomorphism $\iota_{\ulJ}^2 A \sra A$ given by left multiplication by $h_{\ulJ}^{-1}$. 
Define $\Theta_{\ulI,\ulJ}: \iota_{\ulI} \iota_{\ulJ} A = \iota_{\ulI \oplus \ulJ} \iota_{\ulI \cap \ulJ}^2 A \sra \iota_{\ulI \oplus \ulJ} A$ by
\[\Theta_{\ulI,\ulJ} = \iota_{\ulI \oplus \ulJ}(\tau_{\ulI \cap \ulJ}) = \Theta_{\ulJ,\ulI}.
\]

By a proof that is identical to that of \cite[Lemma 4.1]{woncomm}, we have that for any $\ulI, \ulJ, \ulK \in \Gamma$ and any $\varphi \in \Hom_{\gr A}(A, \iota_{\ulI} A)$,
\[ \Theta_{\ulK, \ulI \oplus \ulJ} \circ \iota_{\ulK}(\Theta_{\ulJ, \ulI}) \circ \iota_{\ulK} \iota_{\ulJ} (\varphi) = \Theta_{\ulJ \oplus \ulK, \ulI} \circ \iota_{\ulJ \oplus \ulK}(\varphi) \circ \Theta_{\ulK, \ulJ}.
\]
Therefore, by \cite[Proposition 3.4]{woncomm} we can define the $\Gamma$-graded ring
\begin{equation} \label{Bdef} B(R, \sigma, f) = B = \bigoplus_{\ulJ \in \Gamma} B_{\ulJ} = \bigoplus_{\ulJ \in \Gamma} \Hom_{\gr A}(A, \iota_{\ulJ}A).
\end{equation}
For $a \in B_{\ulI}$ and $b \in B_{\ulJ}$ multiplication in $B$ is given by $a \cdot b = \iota_{\ulI \oplus \ulJ}(\tau_{\ulI \cap \ulJ}) \circ \iota_{\ulJ}(a) \circ b$.

In the remainder of this section, we assume that $B = B(R,\sigma,f)$ is defined as in \eqref{Bdef}.

\begin{theorem}\label{Bequivalent} There is an equivalence of categories 
\[\Gr A \equiv \Gr (B, \Gamma).\]
\end{theorem}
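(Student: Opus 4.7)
The plan is to construct an explicit equivalence $F \colon \Gr A \to \Gr (B, \Gamma)$ given on objects by
\[
F(M) = \bigoplus_{\ulJ \in \Gamma} \Hom_{\Gr A}(A, \iota_{\ulJ} M),
\]
with right $B$-action defined, for $\varphi \in F(M)_{\ulJ}$ and $b \in B_{\ulI}$, by
\[
\varphi \cdot b \;=\; \Theta_{\ulI, \ulJ} \circ \iota_{\ulI}(\varphi) \circ b \;\in\; \Hom_{\Gr A}(A, \iota_{\ulI \oplus \ulJ} M),
\]
and on morphisms by post-composition. The first step is to verify that this really lands in $\Gr(B, \Gamma)$: associativity of the action reduces to the cocycle identity stated immediately before the definition of $B$ (the graded analogue of \cite[Lemma 4.1]{woncomm}). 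A direct computation using the isomorphisms $\Theta_{\ulI, \ulJ}$ and the relation $\iota_{\ulJ}^2 A = h_{\ulJ} A$ will show that $F(A) = B$ and more generally that $F(\iota_{\ulK} A)$ is isomorphic to an appropriate $\Gamma$-graded shift of $B$.

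Next, I would collect the formal properties of $F$. Since each autoequivalence $\iota_{\ulJ}$ is exact and $A$ is a finitely generated projective generator of $\Mod A$, each $\iota_{\ulJ} A$ is a finitely generated projective, hence small projective, object of $\Gr A$; this forces $F$ to be exact and to commute with arbitrary direct sums. Combined with Lemma~\ref{lem.iotasgen}, which tells us that $\{\iota_{\ulJ} A \st \ulJ \in \Gamma\}$ is a generating set, we are set up to apply a Yoneda/Morita-style reconstruction argument. The fully faithful property I would first establish on the generators: the identification
\[
\Hom_{\Gr(B,\Gamma)}\bigl(F(\iota_{\ulI} A), F(\iota_{\ulJ} A)\bigr) \;\cong\; B_{\ulJ \oplus \ulI} \;=\; \Hom_{\Gr A}(A, \iota_{\ulJ \oplus \ulI} A) \;\cong\; \Hom_{\Gr A}(\iota_{\ulI} A, \iota_{\ulJ} A)
\]
follows from the Yoneda lemma applied to $B$-modules together with $\iota_{\ulI}^2 \cong \Id_{\Gr A}$. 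Extending fully faithfulness from the generators to all of $\Gr A$ then proceeds by choosing, for arbitrary $M, N$, two-step presentations of $M$ by direct sums of the $\iota_{\ulJ} A$ and using exactness and additivity of $F$ to compare Hom-groups.

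For essential surjectivity, any $N \in \Gr(B, \Gamma)$ admits a presentation $\bigoplus_k B\s{\ulJ_k} \to \bigoplus_\ell B\s{\ulI_\ell} \to N \to 0$; since each $F(\iota_{\ulI} A)$ is a shift of $B$ and $F$ is fully faithful on the generators, this presentation lifts to a diagram in $\Gr A$ whose cokernel $M$ satisfies $F(M) \cong N$. The main obstacle, and the one delicate point of the whole argument, is the bookkeeping required to verify that the $B$-action built from the $\Theta$-cocycle is associative and compatible with the $\Gamma$-grading on each $F(\iota_{\ulK} A)$; once this is in hand, the proof mirrors that of \cite[Theorem 4.2]{woncomm} and \cite[Proposition 3.4]{woncomm}.
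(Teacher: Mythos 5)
Your proposal is correct and is essentially the same approach as the paper: the paper dispatches the theorem by citing Lemma~\ref{lem.iotasgen} together with \cite[Proposition 3.6]{woncomm}, and your proof is a faithful unpacking of the argument underlying that cited proposition (defining $F(M)=\bigoplus_{\ulJ}\Hom_{\Gr A}(A,\iota_{\ulJ}M)$, using the $\Theta$-cocycle for the $B$-action, and deducing the equivalence from the fact that the $\iota_{\ulJ}A$ are small projective generators). The one place to be a little more careful than your sketch is that $\Theta_{\ulI,\ulJ}$ must be interpreted as a natural isomorphism $\iota_{\ulI}\iota_{\ulJ}\Rightarrow\iota_{\ulI\oplus\ulJ}$ (arising from the natural transformation $\tau_{\ulJ}\colon\iota_{\ulJ}^2\Rightarrow\Id$), not merely as the morphism $\iota_{\ulI}\iota_{\ulJ}A\to\iota_{\ulI\oplus\ulJ}A$, so that it can be evaluated at an arbitrary $M$.
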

\begin{proof} This is an immediate corollary of Lemma~\ref{lem.iotasgen} and \cite[Proposition 3.6]{woncomm}.
\end{proof}

\begin{theorem}\label{Bpresentation} The ring $B$ is a commutative ring with presentation
\[ B \cong \frac{R\left[b_{i,j} \st i \in [1,r], j \in \ZZ \right]}{\left(b_{i,j}^2 = \sigma^{-j}((z - \m_i  )^{n_i}) \st i \in [1,r], j \in \ZZ \right)}.
\]
There is a $\Gamma$-grading on $B$, given by $R = B_{\emptyset}$ and $\deg b_{i,j} = \ule_{i,j}$.
\end{theorem}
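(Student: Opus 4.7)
The plan is to present $B$ by first identifying each graded piece $B_{\ulJ}$ as a free rank-one $R$-module, then choosing compatible generators $b_{i,j}$ in the small components $B_{\ule_{i,j}}$, and finally exhibiting the presentation.

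By Proposition~\ref{projhom} applied to the rank one graded projectives $A$ and $\iota_{\ulJ}A$, each $B_{\ulJ} = \Hom_{\gr A}(A,\iota_{\ulJ}A)$ is free of rank one as a left $R$-module, with $B_{\emptyset}$ identified canonically with $R$ via the identity morphism. This immediately gives $B$ the structure of an $R$-algebra with the correct $\Gamma$-graded underlying module decomposition. For each $i \in [1,r]$ and $j \in \ZZ$, I would take $b_{i,j}$ to be the generator of $B_{\ule_{i,j}}$ corresponding to the maximal embedding $A \hookrightarrow \iota_{\ule_{i,j}} A$ described in Lemma~\ref{projmaxemb}.

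Unfolding the multiplication rule gives
\[
b_{i,j} \cdot b_{i,j} = \iota_{\emptyset}(\tau_{\ule_{i,j}}) \circ \iota_{\ule_{i,j}}(b_{i,j}) \circ b_{i,j},
\]
where $\tau_{\ule_{i,j}}: \iota_{\ule_{i,j}}^2 A = h^{\m_i}_j A \to A$ is left multiplication by $(h^{\m_i}_j)^{-1}$. Since $\iota_{\ule_{i,j}}$ acts on morphisms between rank one projectives by restriction (as in the proof of Proposition~\ref{prop.iotas}), if $b_{i,j}$ is multiplication by $\theta \in \kk(z)$ then the composition above is multiplication by $\theta^2/h^{\m_i}_j$; using Lemma~\ref{projmaxemb} to compute the maximal embedding explicitly gives $\theta = h^{\m_i}_j$, so $b_{i,j}^2 = h^{\m_i}_j \in R$. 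For $(i,j) \neq (k,l)$, commutativity $b_{i,j} b_{k,l} = b_{k,l} b_{i,j}$ holds because both products lie in the rank-one $R$-module $B_{\ule_{i,j} \oplus \ule_{k,l}}$, the disjointness $\ule_{i,j} \cap \ule_{k,l} = \emptyset$ makes the $\tau$ factor trivial, and the natural isomorphism $\iota^{\m_i}_j \iota^{\m_k}_l \cong \iota^{\m_k}_l \iota^{\m_i}_j$ from Lemma~\ref{lem.pic} equates the remaining compositions.

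With these relations in hand I would define a $\Gamma$-graded $R$-algebra homomorphism $\varphi$ from $R[b_{i,j} \mid i \in [1,r], j \in \ZZ]/(b_{i,j}^2 - h^{\m_i}_j)$ to $B$ sending each indeterminate to the chosen generator. The squaring relation reduces every element of the source uniquely to an $R$-linear combination of squarefree monomials $b_{\ulJ} = \prod_{i=1}^r \prod_{j \in J_i} b_{i,j}$ indexed by $\ulJ = (J_1, \dots, J_r) \in \Gamma$; each $b_{\ulJ}$ is a nonzero element of the rank-one $R$-module $B_{\ulJ}$ (since the successive products of maximal embeddings are nonzero in $\kk(z)$) and therefore generates it, which yields bijectivity of $\varphi$ in each grading. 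The main obstacle will be the explicit scalar bookkeeping used to pin down $b_{i,j}^2 = h^{\m_i}_j$: one must carefully identify the maximal embedding $A \hookrightarrow \iota_{\ule_{i,j}} A$ and chase the corresponding scalar through $\iota_{\ule_{i,j}}$ and $\tau_{\ule_{i,j}}$. Commutativity and the final bijection are then essentially formal consequences of the rank-one structure of each $B_{\ulJ}$ together with Lemma~\ref{lem.pic}.
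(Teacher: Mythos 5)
Your overall plan matches the paper's: identify each $B_{\ulJ}$ as free of rank one over $R$, choose generators $b_{i,j}$ and compute $b_{i,j}^2 = h^{\m_i}_j$ from the multiplication rule, then match the squarefree-monomial decomposition of the presented ring with the $\Gamma$-grading of $B$. Your identification of $b_{i,j}$ with the maximal embedding (which is multiplication by $h^{\m_i}_j$) agrees with the paper's $\varphi_{\ule_{i,j}}(h^{\m_i}_j)$, and the commutativity and the computation of $b_{i,j}^2$ are sound.

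There is, however, a genuine gap in the final step. You assert that ``each $b_{\ulJ}$ is a nonzero element of the rank-one $R$-module $B_{\ulJ}$ $\ldots$ and therefore generates it.'' Over $R = \kk[z]$ or $\kk[z,z^{-1}]$, a nonzero element of a rank-one free module need not be a generator (e.g.\ $z \in R$), so nonvanishing alone does not give bijectivity of $\varphi$ in the graded piece $\ulJ$. What you actually need is that multiplication by $h_{\ulJ}$ \emph{is} the maximal embedding $A \hookrightarrow \iota_{\ulJ}A$, equivalently that $(\iota_{\ulJ}A)_0 = h_{\ulJ}R$. For $\ulJ = \ule_{i,j}$ you essentially get this for free from Lemma~\ref{projmaxemb}, but for general $\ulJ$ the product of maximal embeddings is not \emph{a priori} the maximal embedding, because each application of $\iota^{\m_i}_j$ changes which of $M^{\pm}_{\m}\s{n}$ the resulting projective surjects onto, and this affects the structure-constant bookkeeping in Lemma~\ref{projmaxemb}. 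The paper closes exactly this gap by a short codimension count: $(\iota^{\m}_j A)_0$ has codimension $n_\m$ in $A_0 = R$ (since the quotient $M^{\pm n_\m}_{\m}\s{j}$ is $n_\m$-dimensional in degree~$0$), and $h^{\m}_j R \subseteq (\iota^{\m}_j A)_0$ because $(\iota^{\m}_j)^2A = h^{\m}_j A$; comparing codimensions forces $(\iota^{\m}_j A)_0 = h^{\m}_j R$, and iterating gives $(\iota_{\ulJ}A)_0 = h_{\ulJ}R$. With this in hand your argument goes through; without it, ``nonzero $\Rightarrow$ generator'' is a non-sequitur. Note also that once freeness of $B_{\ulJ}$ on $b_{\ulJ}$ is established, the paper does not reprove injectivity of $\varphi$ from scratch as you do but instead cites \cite[Proposition~4.4]{woncomm}; your direct degree-by-degree comparison is a legitimate alternative route, contingent on the corrected generator step.
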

\begin{proof} 
By the construction Proposition~\ref{prop.iotas}, for each $\m \in \Z$ and each $j \in \ZZ$, $\iota^{\m}_{j} A$ is the kernel of the unique graded surjection from $A$ to $\mcMma \s{j} \oplus \mcMpa \s{j}$. Both $\mcMma \s{j}$ and $\mcMpa \s{j}$ have $\kk$-dimension $n_{\m }$ in all graded components in which they are nonzero. Therefore, $(\iota^{\m}_{ j} A)_0$ has codimension $n_{\m }$ in $A_0$. 
Further, $((\iota^{\m}_{ j})^2 A)_0 = (h^{\m}_{ j} A)_0$ and since $((\iota^{\m}_{ j})^2 A)_0 \subseteq (\iota^{\m}_{ j} A)_0 \subseteq A$, by comparing codimensions we conclude that 
\[ (\iota^{\m}_{ j} A)_0 = (h^{\m}_{ j}  A)_0 = h^{\m}_{ j}R.
\]

Thus, for $\ulJ \in \Gamma$, $(\iota_{\ulJ} A)_0 = h_{\ulJ} R$. We use the isomorphism $\varphi_{\ulJ}: (\iota_{\ulJ} A)_0 \sra \Hom_{\gr A}(A, \iota_\ulJ A)$, to identify $(\iota_{\ulJ} A)_0 = h_{\ulJ} R$ with $B_{\ulJ}$. Let $b_{\ulJ} = \varphi_{\ulJ}(h_{\ulJ})$.
Since the autoequivalences $\{ \iota_{\ulJ} \mid \ulJ \in \Gamma\}$ act on morphisms by restriction, using the definition of multiplication in $B$, one can check that for $\ulI, \ulJ \in \Gamma$,
\[b_{\ulI} b_{\ulJ} = b_{\ulI \cap \ulJ}^2 b_{\ulI \oplus \ulJ}.
\]

For each $i \in [1,r]$ and $j \in \ZZ$, let
\[ b_{i,j} = b_{\ule_{i,j}}.
\]
By the above computation, the $b_{i,j}$ commute and $b_{\ulJ} = \prod_{i=1}^r \prod_{j \in J_i} b_{i,j}$.
Since $h_{\ulJ}$ freely generates $(\iota_{\ulJ}A)_0$ as an $R$-module, $b_{\ulJ}$ freely generates $B_{\ulJ}$ as a $B_{\emptyset} = R$-module. Hence, it is clear that the $b_{i,j}$ generate $B$ as an $R$-module. Again using the definition of multiplication in $B$, $b_{i,j}^2 = h^{\m_i}_j$.

Finally, the proof of \cite[Proposition 4.4]{woncomm} shows that the ideal 
\[\left(b_{i,j}^2 = \sigma^{-j}((z - \m_i  )^{n_i{ }}) \st i \in [1,r], j \in \ZZ \right)\]
contains all of the relations among the $b_{i,j}$ and hence $B$ has the claimed presentation.
\end{proof}

\begin{corollary}\label{cor.stacks} Let $A = \bigoplus_{i \in \ZZ} A_i$ be a simple $\ZZ$-graded domain of GK dimension 2 with $A_i \neq 0$ for all $i \in \ZZ$. Then there is a commutative $\ZZ_\fin$-graded ring $B$ so that
\[ \Gr A \equiv \Qcoh \left[ \frac{\Spec B}{\Spec \kk \ZZ_\fin} \right].
\]
\end{corollary}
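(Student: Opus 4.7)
The proof is essentially an assembly of earlier results, so my plan is to chain together three inputs and then invoke a standard fact about quotient stacks.

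First, I would invoke the Bell--Rogalski theorem (Theorem 2.7) to reduce to the GWA setting. Given a simple finitely generated $\ZZ$-graded domain $A$ of GK dimension $2$ with $A_i \neq 0$ for all $i$, Bell--Rogalski produces a GWA $A' = R(\sigma,f)$ satisfying Hypothesis~\ref{hypa} with no two distinct congruent roots of $f$, such that $A$ and $A'$ are graded Morita equivalent; in particular $\Gr A \equiv \Gr A'$. In addition, $A'$ satisfies Hypothesis~\ref{hyp}, so all the machinery of Sections~\ref{sec.grmods} and the start of Section~\ref{sec.comm} applies to it.

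Next, I apply Theorem~\ref{Bequivalent} to $A'$: there is a commutative $\Gamma$-graded ring $B = B(R,\sigma,f)$, with $\Gamma = \bigoplus_{i=1}^r \ZZ_\fin$, such that $\Gr A' \equiv \Gr(B,\Gamma)$. Since $\ZZ_\fin \oplus \ZZ_\fin \cong \ZZ_\fin$ (as noted at the start of Section~\ref{sec.comm}), we have $\Gamma \cong \ZZ_\fin$ as abelian groups, so the category of $\Gamma$-graded $B$-modules can be relabelled as the category of $\ZZ_\fin$-graded modules over a commutative $\ZZ_\fin$-graded ring $B$. Combined with the previous step, this yields $\Gr A \equiv \Gr(B,\ZZ_\fin)$.

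Finally, I would invoke the standard equivalence between graded modules over a commutative group-graded ring and quasi-coherent sheaves on the corresponding quotient stack. Specifically, for an abelian group $G$ and a commutative $G$-graded $\kk$-algebra $C$, the diagonalizable group scheme $\Spec \kk G$ (whose character group is $G$) acts on $\Spec C$, and the $G$-grading is equivalent data to this action. Under this correspondence, $G$-graded $C$-modules are identified with $(\Spec \kk G)$-equivariant quasi-coherent sheaves on $\Spec C$, i.e.\ with $\Qcoh[\Spec C/\Spec \kk G]$. This is the same identification used by Smith in \cite{smith} to describe $\Gr A_1$, and it applies verbatim with $C = B$ and $G = \ZZ_\fin$ to give $\Gr(B,\ZZ_\fin) \equiv \Qcoh[\Spec B/\Spec \kk\ZZ_\fin]$. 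Composing the equivalences completes the proof.

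There is no real obstacle here beyond bookkeeping; the only subtlety is keeping the grading group straight (writing $\Gamma$ as $\ZZ_\fin$ via the noncanonical isomorphism $\bigoplus_{i=1}^r \ZZ_\fin \cong \ZZ_\fin$), and citing the stacky translation of a graded commutative ring in the form stated in the corollary.
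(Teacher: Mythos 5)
Your proposal is correct and follows essentially the same route as the paper: reduce via Bell--Rogalski to a GWA satisfying Hypothesis~\ref{hyp}, apply Theorem~\ref{Bequivalent} to obtain a commutative $\Gamma$-graded ring $B$, relabel the grading group via $\Gamma = \bigoplus_{i=1}^r \ZZ_\fin \cong \ZZ_\fin$, and then invoke the standard dictionary between $G$-graded modules over a commutative $\kk$-algebra and equivariant quasi-coherent sheaves under the $\Spec\kk G$-action (the paper cites Stacks Project Tag 06WS for this). In fact you are somewhat more careful than the paper's own brief proof, which cites only Theorems~\ref{Bequivalent} and~\ref{Bpresentation} and leaves the Bell--Rogalski reduction implicit even though the corollary is stated for an arbitrary simple finitely generated $\ZZ$-graded domain rather than a GWA.
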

\begin{proof} This follows from standard results \cite[Tag 06WS]{stacks-project}, along with Theorems~\ref{Bequivalent} and~\ref{Bpresentation}. The $\ZZ_\fin$-grading on $B$ gives an action of $\Spec \kk \ZZ_\fin$ on $\Spec B$. We can therefore take the quotient stack $\chi = \left[\frac{\Spec B}{\Spec \kk \ZZ_\fin} \right]$. The category of $\ZZ_\fin$-equivariant quasicoherent sheaves on $\Spec B$ is equivalent to the category $\Qcoh (\chi)$ of quasicoherent sheaves on $\chi$. Hence, $\Gr A \equiv \Gr (B, \ZZ_\fin) \equiv \Qcoh(\chi)$.
\end{proof}

Unfortunately, since the ring $\kk \ZZ_{\fin}$ is not locally of finite type, the quotient stack $\chi$ is not easy to study. We devote the remainder of the paper to the study of the ring $B$.

\begin{corollary}
Suppose that $A$ is a GWA satisfying Hypothesis~\ref{hypa}. Then there exists a quotient stack $\chi$ such that $\QGr A \equiv \Qcoh \chi$.
\end{corollary}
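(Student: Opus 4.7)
The plan is to combine two results that are already in place: the translation principle reduction (Corollary~\ref{cor.qgr}) and the stacky description for simple GWAs (Corollary~\ref{cor.stacks}). Since Corollary~\ref{cor.stacks} only applies to \emph{simple} $\ZZ$-graded domains of GK dimension 2, the strategy is to first pass from an arbitrary GWA satisfying Hypothesis~\ref{hypa} to one satisfying the stronger Hypothesis~\ref{hyp}, and then invoke the stack description.

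First, by Corollary~\ref{cor.qgr}, there is a simple GWA $A' = R(\sigma, f')$ satisfying Hypothesis~\ref{hyp} such that
\[
\QGr A \equiv \Gr A'.
\]
Since $A'$ is simple, there are no nonzero finite-dimensional graded $A'$-modules, so $\Tors A' = 0$ and therefore $\Gr A' = \QGr A'$.

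Next, $A'$ is a simple finitely generated $\ZZ$-graded domain of GK dimension $2$ with $A'_i \neq 0$ for all $i \in \ZZ$ (the GWAs of Hypothesis~\ref{hypa} have this property by construction). Thus Corollary~\ref{cor.stacks} applies to $A'$ and yields a commutative $\ZZ_{\fin}$-graded ring $B$ together with a quotient stack
\[
\chi = \left[\frac{\Spec B}{\Spec \kk \ZZ_{\fin}}\right]
\]
for which $\Gr A' \equiv \Qcoh(\chi)$.

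Composing the two equivalences gives $\QGr A \equiv \Gr A' \equiv \Qcoh(\chi)$, which is the desired conclusion. There is no real obstacle here; the work was done in proving the translation principle (Theorem~\ref{thm.translation} and Corollary~\ref{cor.qgr}) and in constructing the commutative ring $B$ (Theorems~\ref{Bequivalent} and~\ref{Bpresentation}), and this corollary is simply the formal combination of those statements.
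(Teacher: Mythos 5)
Your proof is correct and is essentially the paper's own argument: the paper also cites Corollary~\ref{cor.qgr} to reduce to a simple GWA $A'$ with $\QGr A \equiv \Gr A'$ and then applies Corollary~\ref{cor.stacks}; you have simply spelled out the intermediate steps (such as $\Gr A' = \QGr A'$ and the hypotheses of Corollary~\ref{cor.stacks}) explicitly.
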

\begin{proof}This follows immediately from Corollary~\ref{cor.qgr} and Corollary~\ref{cor.stacks}.
\end{proof}

\begin{theorem}\label{NonNoeth} The ring $B$ is non-noetherian.
\end{theorem}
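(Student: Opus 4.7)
The plan is to exhibit a non-noetherian quotient of $B$; since every quotient of a noetherian ring is noetherian, this will force $B$ to be non-noetherian.

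Fix a root $\alpha_1 \in \Z$ and consider the quotient $B/(z - \alpha_1)$. Using the presentation from Theorem~\ref{Bpresentation}, each relation $b_{i,j}^2 = \sigma^{-j}((z-\alpha_i)^{n_i})$ becomes $b_{i,j}^2 = c_{i,j}$, where $c_{i,j} \in \kk$ is the constant obtained by evaluating $\sigma^{-j}((z-\alpha_i)^{n_i})$ at $z = \alpha_1$. The scalar $c_{i,j}$ vanishes precisely when $\alpha_i = \sigma_\kk^j(\alpha_1)$; Hypothesis~\ref{hyp} (no two distinct roots of $f$ lie on the same $\sigma_\kk$-orbit, together with the fact that $\sigma_\kk$ has no fixed points on $\{\alpha_i\}$) implies this occurs if and only if $(i,j) = (1,0)$, in which case $b_{1,0}^2 = 0$. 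Thus
\[ B/(z-\alpha_1) \iso \kk[b_{1,0}]/(b_{1,0}^2) \tensor_\kk \bigotimes_{(i,j) \neq (1,0)} \kk[b_{i,j}]/(b_{i,j}^2 - c_{i,j}), \]
with $c_{i,j} \in \kk^\times$ for every $(i,j) \neq (1,0)$.

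Next I would compose this with the further quotient sending $b_{1,0} \mapsto 0$ to obtain a surjection $B \twoheadrightarrow C$, where $C = \bigotimes_{(i,j) \neq (1,0)} \kk[b_{i,j}]/(b_{i,j}^2 - c_{i,j})$. Because $\kk$ is algebraically closed and each $c_{i,j}$ is a nonzero square, every tensor factor splits as $\kk[b_{i,j}]/(b_{i,j}^2 - c_{i,j}) \iso \kk \times \kk$, so $C$ is isomorphic to an infinite tensor product of copies of $\kk \times \kk$, realized as the direct limit of the corresponding finite tensor products $\kk^{2^n}$.

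The final step is to verify that $C$ is non-noetherian. Every generator $b_{i,j}$ of $C$ is integral over $\kk$, so $C$ is integral over $\kk$ and has Krull dimension zero. On the other hand, $C$ admits infinitely many distinct $\kk$-algebra maps to $\kk$, one for each independent choice of sign of $\sqrt{c_{i,j}}$, producing infinitely many maximal ideals. Since a commutative noetherian ring of Krull dimension zero is artinian and hence has only finitely many maximal ideals, $C$ is not noetherian. Consequently $B$ admits a non-noetherian quotient and is itself non-noetherian. The main subtlety, and the point at which Hypothesis~\ref{hyp} is essential, is the identification of which $c_{i,j}$ vanish: if additional $c_{i,j}$ were zero, extra tensor factors would collapse to local nilpotent pieces and might obscure the abundance of maximal ideals that drives the argument.
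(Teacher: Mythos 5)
Your proof is correct but follows a genuinely different route from the paper's. The paper also passes to a quotient of $B$ by a principal ideal of the form $(z - c)$, but it then exhibits an explicit non-stabilizing ascending chain $I_n = (b_{1,j} + \sqrt{\gamma_{1,j}} \mid 0 \le j < n)$ and verifies via a Gr\"obner basis computation (the leading terms of the generators of a suitable lift $J_n$ are pairwise coprime, so they form a Gr\"obner basis, and $b_{1,n} + \sqrt{\gamma_{1,n}}$ does not reduce to zero) that the chain does not stabilize. This argument works uniformly no matter which of the scalars $\gamma_{i,j}$ vanish. Your argument, by contrast, chooses the evaluation point $\alpha_1$ to be a root of $f$ and then leans on Hypothesis~\ref{hyp} (together with the fact that $\sigma_\kk$ has no fixed points among the roots, which follows from Hypothesis~\ref{hypa}) to pin down that exactly one $c_{i,j}$ vanishes; after killing that single nilpotent generator you obtain an infinite tensor product $C$ of copies of $\kk \times \kk$, and you conclude abstractly from the theorem that a noetherian ring of Krull dimension zero is Artinian and thus has finite maximal spectrum. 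Your version is conceptually cleaner --- it replaces a Gr\"obner computation with a dimension-theoretic/Artinian argument and makes transparent \emph{why} the ring is big (uncountably many $\kk$-points) --- at the cost of invoking Hypothesis~\ref{hyp} more essentially than the paper does, though as you note one could quotient out all nilpotent generators at once and run the same argument in general.

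Two small points worth being careful about when writing this up: (i) the identification of $B/(z - \alpha_1)$ with the infinite tensor product is justified because $B$ is a free $R$-module with $R$-basis the monomials $b_{\ulJ}$, so the quotient really is $\kk[b_{i,j}]/(b_{i,j}^2 - c_{i,j})$ with no hidden relations; (ii) the infinitely many $\kk$-algebra maps $C \to \kk$ have pairwise distinct kernels because any two sign choices differ at some $b_{i,j}$ with $\sqrt{c_{i,j}} \neq 0$. Both of these are implicit in what you wrote and both hold, so the proof stands.
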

\begin{proof}
It is enough to prove that the quotient of $B$ by the ideal generated by $z-1$ is non-noetherian. This quotient is of the form
\[
\frac{B}{(z-1)}=\frac{\kk[b_{i,j} \mid i \in [1,r], j \in \ZZ]}{\left(b_{i,j}^2-\gamma_{i,j} \st i \in [1,r], j \in \ZZ\right)},
\]
where the $\gamma_{i,j}$ are elements of $\kk$ depending on $\m _i$, $j$, $n_i$, and on the defining automorphism $\sigma$ of $A$. In particular, if $R = \kk[z]$ and $\sigma(z) = z-1$ then $\gamma_{i,j} = (1-j-\m _i)^{n_i}$ while if $R = \kk[z,z\inv]$ and $\sigma(z) = \xi z$, then $\gamma_{i,j} = (\xi^{-j} - \m _i)^{n_i}$. In either case, we claim that the ideals
\[
I_n=(b_{1,j}+\sqrt{\gamma_{1,j}}\mid 0\leq j<n)
\]
form a non-stabilizing ascending chain of ideals in $B/(z-1)$. It suffices to prove that $b_{1,n}+\sqrt{\gamma_{1,n}}$ is not in $I_n$. Indeed if we further quotient $B/(z-1)$ by $I_n$ we get a ring of the form
\[
\frac{B}{(z-1)+I_n}=\frac{\kk[b_{i,j} \mid i \in [1,r], j \in \ZZ ]}{\begin{pmatrix}b_{1,j}+\sqrt{\gamma_{1,j}}&\mid &0\leq j<n\\
b_{1,j}^2-\gamma_{1,j}&\mid& j\geq n\\
b_{i,j}^2-\gamma_{i,j}&\mid &i \in [2,r] \end{pmatrix}}.
\]
We prove that $b_{1,n}+\sqrt{\gamma_{1,n}}$ is not zero in this quotient by  proving that it does not belong to the ideal
\[
J_n=\begin{pmatrix}b_{1,j}+\sqrt{\gamma_{1,j}}&\mid &0\leq j<n\\
b_{1,j}^2-\gamma_{1,j}&\mid& j\geq n\\
b_{i,j}^2-\gamma_{i,j}&\mid & i \in [2,r] \end{pmatrix}
\]
of the polynomial ring $\kk[b_{i,j} \mid i \in [1,r], j \in \ZZ]$. Fix a monomial order on the polynomial ring by setting $b_{i,j}<b_{p,q}$ if $i<p$ or if $i=p$ and $j<q$, and using lexicographic order. With this ordering the generators of the ideal $J_n$ form a Gr\"{o}bner basis because their leading terms are relatively prime. The reduction of $b_{1,n}+\sqrt{\gamma_{1,n}}$ by this Gr\"{o}bner basis does not yield zero, showing that $I_{n}\subsetneq I_{n+1}$.
\end{proof}
\begin{theorem}\label{KrullDim}
The Krull dimension of the ring $B$ is one.
\end{theorem}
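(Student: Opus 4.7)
The plan is to exploit the presentation of $B$ given in Theorem~\ref{Bpresentation} to show that $B$ is an integral extension of $R$, and then invoke the standard fact that Krull dimension is preserved under integral ring extensions.

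First, I would observe that the presentation
\[ B \cong \frac{R[b_{i,j} \mid i \in [1,r],\, j \in \ZZ]}{\left(b_{i,j}^2 - \sigma^{-j}((z-\m_i)^{n_i}) \mid i \in [1,r],\, j \in \ZZ\right)} \]
exhibits each generator $b_{i,j}$ as a root of the monic polynomial $T^2 - \sigma^{-j}((z-\m_i)^{n_i}) \in R[T]$. In both cases of Hypothesis~\ref{hypa}, $\sigma^{-j}((z-\m_i)^{n_i})$ lies in $R$ (explicitly, $(z-j-\m_i)^{n_i}$ in case~(1) and $(\xi^{-j}z-\m_i)^{n_i}$ in case~(2)). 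Hence every element of $B$ is a polynomial in integral elements over $R$, so the inclusion $R \hookrightarrow B$ is an integral extension.

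Next, I would recall the Krull dimension of $R$ itself: both $\kk[z]$ and $\kk[z,z\inv]$ are principal ideal domains of Krull dimension one. Then I would invoke the classical consequence of the Cohen--Seidenberg theorems (lying over, going up, and incomparability) that for any integral ring extension $R \subseteq B$ one has $\dim B = \dim R$; this holds without any finite generation hypothesis and so applies even though $B$ is generated over $R$ by infinitely many elements $b_{i,j}$. Combining these two facts yields $\dim B = 1$.

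There is no real obstacle here: the entire argument reduces to recognizing that the defining relations of $B$ are monic in the $b_{i,j}$ and then citing the standard commutative algebra result. If desired, one may spell out lying-over to produce an explicit chain $(0) \subsetneq \pp$ in $B$ above a chain $(0) \subsetneq (z-\lambda)$ in $R$ (using that $B$ is a domain, which follows from Theorem~\ref{Bequivalent} since $A$ is a domain and $B$ lies inside $\QgrA$-related constructions), and cite incomparability to bound chains in $B$ by chains in $R$.
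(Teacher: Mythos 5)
Your main argument is correct and is essentially identical to the paper's: each $b_{i,j}$ is integral over $R$ because the defining relation $b_{i,j}^2 = \sigma^{-j}((z-\m_i)^{n_i})$ is monic, so $B$ is integral over $R$, and integral extensions preserve Krull dimension (no finiteness hypothesis needed), giving $\dim B = \dim R = 1$. One small caveat: your parenthetical claim that ``$B$ is a domain, which follows from Theorem~\ref{Bequivalent} since $A$ is a domain'' is not correct in general --- $B$ is a domain only when every $n_i$ is odd (Proposition~\ref{CoherentDomain}(1)), and a graded module category equivalence does not transfer the domain property --- but this remark is inessential since the dimension equality for integral extensions requires no such hypothesis.
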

\begin{proof}
We first claim that $R\subseteq B$ is an integral extension. Indeed, it is clear that for each $i \in [1,r]$ and each $j \in \ZZ$, that $b_{i,j}$ is integral over $R$. Since $B$ is generated by the $b_{i,j}$ over $R$ then $B$ is integral over $R$ by \cite[Corollary 5.3]{AtMcD}. Finally, by \cite[Exercise 9.2]{Mats} the Krull dimension of $B$ is the same as the Krull dimension of $R$, which is one.
\end{proof}

\begin{remark} \label{DirectLimitRemark}
Let $X$ be the set $\{(i,j)\mid i \in [1,r] \text{ and } j\in\mathbb{Z}\}$. Since $X$ is countable, we can fix an enumeration $X=\{x_m\mid m\in\mathbb{N}\}$. If $x_m=(i,j)$ then we write $b_{i,j}$ as $b_{x_m}$ and $\sigma^{-j}((z-\m _i)^{n_i})$ as $g_m$. Denote by $B_m$ the following ring
\[
B_m=\frac{R[b_{x_1},\ldots,b_{x_m}]}{\left(b_{x_t}^2-g_t\mid t \in [1,m] \right)}.
\]
Then 
\[
B_{m+1}=\frac{B_m[b_{x_{m+1}}]}{\left(b_{x_{m+1}}^2-g_{m+1}\right)}
\]
with obvious maps $B_m\rightarrow B_{m+1}$, and we have that
\begin{equation}\label{DirectLimit}
B =\underset{m}{\varinjlim}\;B_m.
\end{equation}
\end{remark}

\begin{definition}
A commutative ring is said to be \emph{coherent} if every finitely generated ideal is finitely presented.
\end{definition}

\begin{proposition}\label{CoherentDomain}
\begin{enumerate}
\item If $n_i$ is odd for all $i \in [1, r]$, then $B$ is a domain.
\item The ring $B$ is coherent.
\end{enumerate}
\end{proposition}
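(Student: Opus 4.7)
The plan is to exploit the direct limit presentation $B = \varinjlim_m B_m$ from Remark~\ref{DirectLimitRemark}, in which each inclusion $B_m \hookrightarrow B_{m+1} = B_m[b_{x_{m+1}}]/(b_{x_{m+1}}^2 - g_{m+1})$ realises $B_{m+1}$ as a free $B_m$-module of rank two; in particular, all transition maps are both injective and flat.

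For (1), I would reduce to showing that each $B_m$ is a domain. Since $B_m$ is free as an $R$-module, the natural map $B_m \hookrightarrow F_m := B_m \otimes_R \kk(z)$ is injective, so it suffices to show that $F_m$ is a field. From the explicit form of $\sigma$ on $R$, each $g_\ell$ has the shape $u_\ell(z-\beta_\ell)^{n_\ell}$ for some $u_\ell \in \kk^\times$ and some $\beta_\ell \in \kk$, and by Hypothesis~\ref{hyp} the points $\beta_\ell = \sigma_\kk^{-j}(\alpha_i)$ are pairwise distinct as $\ell$ varies. Inside $F_m$ the element $z - \beta_\ell$ is invertible, and because $\kk$ is algebraically closed and $n_\ell$ is odd, one can rescale $b_{x_\ell}$ to obtain an element $v_\ell \in F_m$ satisfying the cleaner relation $v_\ell^2 = z - \beta_\ell$; consequently $F_m$ is presented as the iterated quadratic extension $\kk(z)[v_0,\dots,v_{m-1}]/(v_\ell^2 - (z-\beta_\ell))_{\ell < m}$. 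By Kummer theory this is a field if and only if the classes of the $z - \beta_\ell$ are $\mathbb{F}_2$-linearly independent in $\kk(z)^\times/(\kk(z)^\times)^2$, which reduces to showing that for any nonempty finite set $S$ of distinct $\beta_\ell$ the product $\prod_{\ell \in S}(z-\beta_\ell)$ is not a square in $\kk(z)$. This is immediate from valuation considerations: the valuation of such a product at any $\beta_\ell$ with $\ell \in S$ is $1$, whereas any square has even valuation everywhere.

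For (2), I would invoke the classical result that a filtered colimit of coherent rings along flat transition maps is coherent (see, e.g., Glaz, \emph{Commutative Coherent Rings}, Theorem~2.3.3). Each $B_m$ is a finitely generated $\kk$-algebra, hence noetherian and a fortiori coherent; the transition maps $B_m \hookrightarrow B_{m+1}$ are flat as noted; therefore $B = \varinjlim_m B_m$ is coherent.

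The step I expect to require the most care is the Kummer-theoretic argument in (1), where one must simultaneously use Hypothesis~\ref{hyp} (to secure distinctness of the $\beta_\ell$), the oddness of each $n_\ell$ (to reduce the relation $b_{x_\ell}^2 = u_\ell(z-\beta_\ell)^{n_\ell}$ to adjunction of a square root of $z-\beta_\ell$), and algebraic closure of $\kk$ (to absorb the scalar $u_\ell$ into the rescaling). Part (2), by contrast, is essentially formal once the direct limit description with flat transition maps is in hand.
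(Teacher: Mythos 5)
Your proof of part (2) is essentially identical to the paper's: both reduce to Glaz's theorem on coherence of filtered colimits along flat maps and observe that each transition $B_{m-1} \hookrightarrow B_m$ is free of rank two.

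Part (1) is correct but takes a genuinely different route. The paper proceeds by induction on $m$: assuming $B_m$ is a domain, it asserts that $\sqrt{g_{m+1}} \notin B_m$ (appealing to the hypothesis that all $n_i$ are odd), concludes that $b_{x_{m+1}}^2 - g_{m+1}$ is a prime element of the polynomial ring $B_m[b_{x_{m+1}}]$, and hence that $B_{m+1}$ is a domain. Your argument instead passes to the generic fibre: you embed $B_m$ into $F_m = B_m \otimes_R \kk(z)$ (using freeness of $B_m$ over $R$), rescale the generators to realize $F_m$ as the iterated quadratic extension $\kk(z)[v_\ell]/(v_\ell^2 - (z-\beta_\ell))$, and then use Kummer theory together with a valuation computation to see that $F_m$ is a field. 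One advantage of your approach is that it makes completely explicit the reason why $\sqrt{g_{m+1}}$ cannot lie in $B_m$ (nor even in $\operatorname{Frac}(B_m)$), which is the linchpin of the paper's induction but is stated there without elaboration; the price is the additional machinery of Kummer theory. Both arguments are sound, and both correctly identify where the hypotheses enter: the oddness of $n_i$ to reduce to a square root of $z - \beta_\ell$, the distinctness of the $\beta_\ell$ coming from Hypothesis~\ref{hyp}, and the algebraic closure of $\kk$ to absorb the unit $u_\ell$ and to use that the nonzero residue field has trivial $\mathbb{F}_2$-cohomology.
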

\begin{proof}
(1) Since the direct limit of domains is a domain, by \eqref{DirectLimit} it is enough to show that the rings $B_m$ are all domains. We set $B_{-1}$ to be $R$ (which is a domain) and proceed by induction on $m$. Since
\[
B_{m+1}=\frac{B_m[b_{x_{m+1}}]}{\left(b_{x_{m+1}}^2-g_{m+1}\right)},
\]
and by assumption $\sqrt{g_{m+1}}\not\in B_m$, it is straightforward to see that $b^2_{x_{m_1}}-g_{m+1}$ is a prime element in the domain $B_m[b_{x_{m+1}}]$ and therefore $B_{m+1}$ is a domain.

(2) By \cite[Theorem 2.3.3]{Glaz} it suffices to show that $B_{m}$ is a flat extension of $B_{m-1}$. Indeed $B_m=B_{m-1}\oplus b_{x_m}B_{m-1}$, and hence the extension is free.
\end{proof}

Let $I  \subseteq [1,r]$ consist of those $i$ such that $n_i$ is even. If $I$ is empty, then the proposition above says that $B$ is a domain. In the following lemma, we classify the minimal prime ideals of $B$ when $I$ is not empty.

\begin{lemma}\label{MinimalPrimesLemma}
Suppose $I \neq \emptyset$. An ideal of $B$ is a minimal prime if and only if it is of the form
\begin{equation}
\label{MinimalPrimeIdeal}
\mathfrak{p}=
\left(b_{i,j}-(-1)^{\varepsilon_{i,j}}\sigma^{-j}\left((z-\alpha_i)^{\frac{n_i}{2}}\right) \st i \in I, j\in\mathbb{Z}\right)
\end{equation}
for some choice of $\varepsilon_{i,j}\in\{0,1\}$ for each $i \in I$ and $j\in\mathbb{Z}$.

\end{lemma}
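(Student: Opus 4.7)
My plan is to analyze the quotient ring $B/\mathfrak{p}$ directly for each candidate ideal $\mathfrak{p}$. Abbreviate $s_{i,j} = \sigma^{-j}((z-\alpha_i)^{n_i/2})$, so that when $i \in I$ the defining relation $b_{i,j}^2 = \sigma^{-j}((z-\alpha_i)^{n_i})$ factors as $(b_{i,j}-s_{i,j})(b_{i,j}+s_{i,j}) = 0$ in $B$.

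First I will verify that each ideal $\mathfrak{p}$ of the form~\eqref{MinimalPrimeIdeal} is prime. In $B/\mathfrak{p}$ each generator $b_{i,j}$ with $i \in I$ is identified with $(-1)^{\varepsilon_{i,j}} s_{i,j} \in R$; this substitution is consistent with the relation $b_{i,j}^2 = s_{i,j}^2$ and introduces no new relations on the remaining $b_{i,j}$. Substituting into the presentation of $B$ from Theorem~\ref{Bpresentation} yields
\[ B/\mathfrak{p} \;\cong\; \frac{R[b_{i,j} \mid i \notin I,\ j \in \ZZ]}{\bigl(b_{i,j}^2 - \sigma^{-j}((z-\alpha_i)^{n_i}) \;\big|\; i \notin I,\ j \in \ZZ\bigr)}. \]
Since every surviving index has $n_i$ odd, Proposition~\ref{CoherentDomain}(1) applies and shows that $B/\mathfrak{p}$ is a domain, so $\mathfrak{p}$ is prime. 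The proof of that proposition, which proceeds through the direct limit of free $R$-modules from Remark~\ref{DirectLimitRemark}, also furnishes an embedding $R \hookrightarrow B/\mathfrak{p}$ that I will reuse below.

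Next I will show minimality by contradiction. Suppose a prime $\mathfrak{q} \subsetneq \mathfrak{p}$ existed; then some generator $b_{i_0,j_0} - (-1)^{\varepsilon_{i_0,j_0}} s_{i_0,j_0}$ of $\mathfrak{p}$ lies outside $\mathfrak{q}$. Its product with the conjugate factor $b_{i_0,j_0} + (-1)^{\varepsilon_{i_0,j_0}} s_{i_0,j_0}$ vanishes in $\mathfrak{q}$, so primality forces the conjugate into $\mathfrak{q} \subseteq \mathfrak{p}$. Subtracting the two elements now both lying in $\mathfrak{p}$ yields $2 s_{i_0,j_0} \in \mathfrak{p}$, which, since $\operatorname{char}\kk = 0$, forces $s_{i_0,j_0} \in \mathfrak{p}$. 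This contradicts the embedding $R \hookrightarrow B/\mathfrak{p}$ established in the previous paragraph.

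Conversely, given any minimal prime $\mathfrak{q}$ of $B$, for each pair $(i,j)$ with $i \in I$ the factorization $(b_{i,j}-s_{i,j})(b_{i,j}+s_{i,j}) = 0 \in \mathfrak{q}$ forces at least one factor into $\mathfrak{q}$, and I choose $\varepsilon_{i,j}$ so that $b_{i,j} - (-1)^{\varepsilon_{i,j}} s_{i,j} \in \mathfrak{q}$. The corresponding ideal $\mathfrak{p}$ from~\eqref{MinimalPrimeIdeal} satisfies $\mathfrak{p} \subseteq \mathfrak{q}$ and is prime by the first step, so minimality of $\mathfrak{q}$ forces $\mathfrak{p} = \mathfrak{q}$. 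The main obstacle in this plan is the identification in the first step: one must verify that $B/\mathfrak{p}$ admits the claimed presentation, and only then may Proposition~\ref{CoherentDomain}(1) be invoked. Once this is done, the remaining steps are straightforward manipulations of the factored quadratic relations.
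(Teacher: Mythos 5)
Your proof is correct and follows essentially the same approach as the paper: identify $B/\mathfrak{p}$ with the quotient obtained by retaining only the generators $b_{i,j}$ with $n_i$ odd (via substitution/evaluation), invoke Proposition~\ref{CoherentDomain}(1) to see this is a domain, and then use the factored quadratic relations to deduce minimality and the converse. You spell out the minimality step more explicitly than the paper does, but the underlying reasoning is the same.
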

\begin{proof}
Let $S$ be the ring $R[b_{i,j}\mid i\in[1,r], j\in\mathbb{Z}]$ and let $S'$ be the ring $R[b_{i,j}\mid i \in [1,r] \setminus I , j\in\mathbb{Z}]$. The prime ideals of $B$ correspond to prime ideals of $S$ containing the ideal 
\begin{equation}
\left(b_{i,j}^2 - \sigma^{-j}((z - \alpha_i  )^{n_i} )\mid i\in [1,r], j \in \ZZ\right).
\end{equation}
For each $i \in I$ and $j \in \ZZ$, choose $\varepsilon_{i,j} \in \{0,1\}$. Now define the map
\[
\rho:S\rightarrow S'\rightarrow\frac{S'}{(b_{i,j}^2-\sigma^{-j}((z-\alpha_i)^{n_i})\mid n_i\mathrm{\;odd})},
\]
where the first map is the evaluation map that sends $b_{i,j}$ to $(-1)^{\varepsilon_{i,j}}\sigma^{-j}\left((z-\alpha_i)^{\frac{n_i}{2}}\right)$ when $n_i$ is even and the second map is the canonical projection. The kernel of $\rho$ is generated by the generators of the ideal in \eqref{MinimalPrimeIdeal} and by the elements \hbox{$b_{i,j}^2-\sigma^{-j}((z-\alpha_i)^{n_i})$} for $i\in [1,r]\backslash I$. Since $S/\mathrm{Ker}\;\rho$ is isomorphic to $B/\mathfrak{p}$ and since 
\[\frac{S'}{\left(b_{i,j}^2-\sigma^{-j}((z-\alpha_i)^{n_i})\st i \in [1,r]\setminus I, j\in \ZZ\right)}
\]
is a domain by Proposition~\ref{CoherentDomain}, we deduce that the ideal $\pp$ is prime in $B$. To see that $\pp$ is a minimal prime we observe that if a prime ideal contains $b_{i,j}^2-\sigma^{-j}((z-\alpha_i)^{n_i})$ with $n_i$ even then it contains $b_{i,j}-\sigma^{-j}\left((z-\alpha_i)^{\frac{n_i}{2}}\right)$ or $b_{i,j}+\sigma^{-j}\left((z-\alpha_i)^{\frac{n_i}{2}}\right)$.
\end{proof}

The proof of the following result is a straightforward generalization of the proof of \cite[Proposition 4.6]{woncomm}. We reproduce the argument here for completeness.

\begin{corollary}\label{reduced}
The ring $B$ is reduced.
\end{corollary}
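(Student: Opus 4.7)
The plan is to exhibit $B$ as a subring of a reduced ring, namely its generic fiber $B \otimes_R \kk(z)$.

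First I would establish that $B$ is a free $R$-module. Using the tower $B = \varinjlim_m B_m$ from Remark~\ref{DirectLimitRemark}, each extension $B_{m-1} \hookrightarrow B_m = B_{m-1}[b_{x_m}]/(b_{x_m}^2 - g_m)$ is free of rank two on $\{1, b_{x_m}\}$, as already noted in the proof of Proposition~\ref{CoherentDomain}(2). A straightforward induction then shows $B_m$ is free over $R$ on the squarefree monomials in $b_{x_1}, \dots, b_{x_m}$, and passing to the direct limit, $B$ is free over $R$ on the set of all (finite) squarefree monomials in the $b_{i,j}$. Since $R$ is a domain, $B$ is $R$-torsion-free, so the localization map $B \hookrightarrow B \otimes_R \kk(z)$ is injective.

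Next I would prove that $B \otimes_R \kk(z)$ is reduced. Setting $S_m := B_m \otimes_R \kk(z)$, we have $B \otimes_R \kk(z) = \varinjlim_m S_m$. I claim each $S_m$ is a finite product of finite field extensions of $\kk(z)$. The base case $S_0 = \kk(z)$ is immediate. For the inductive step, $S_m \cong S_{m-1}[t]/(t^2 - g_m)$; writing $S_{m-1} \cong K_1 \times \cdots \times K_n$ as a product of fields, one obtains $S_m \cong \prod_i K_i[t]/(t^2 - g_m)$. Since $g_m$ is a nonzero element of $R \subseteq \kk(z) \subseteq K_i$, and since $\mathrm{char}\,\kk = 0$, each factor $K_i[t]/(t^2 - g_m)$ is either a separable quadratic field extension of $K_i$ (when $g_m$ is not a square in $K_i$) or, by the Chinese Remainder Theorem, splits as $K_i \times K_i$ (when $g_m = h^2$ in $K_i$, the ideals $(t-h)$ and $(t+h)$ being comaximal since $2h$ is a unit).

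Thus each $S_m$ is reduced, hence so is the direct limit $B \otimes_R \kk(z)$, and the embedding $B \hookrightarrow B \otimes_R \kk(z)$ forces $B$ to be reduced. I expect the main subtlety to lie in the inductive step for the generic fiber---specifically, in checking that $g_m$ remains nonzero in each component $K_i$ and that CRT applies in the square case---but both are automatic from the facts that $g_m$ already lives in the base field $\kk(z)$ and that $\mathrm{char}\,\kk = 0$.
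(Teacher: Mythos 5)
Your argument is correct, and it takes a genuinely different route from the paper. The paper's proof first disposes of the case where all multiplicities $n_i$ are odd (then $B$ is a domain by Proposition~\ref{CoherentDomain}), and in the remaining case it invokes the explicit classification of minimal primes from Lemma~\ref{MinimalPrimesLemma} and then shows directly, by a somewhat intricate calculation inside the finitely generated subalgebras $B(\ulJ)$, that the intersection of those primes is $(0)$. Your argument instead exploits the observation that $B$ is a free $R$-module (via the filtration $B = \varinjlim B_m$, each step being free of rank two), so $B$ embeds into its generic fiber $B \otimes_R \kk(z)$; that fiber is a direct limit, along injective maps, of rings $S_m$ which are finite products of fields, since $t^2 - g_m$ is separable over every component (as $g_m \in R \setminus \{0\}$ and $\operatorname{char}\kk = 0$). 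This is cleaner, handles the odd- and even-multiplicity cases uniformly, and sidesteps the minimal-prime classification entirely. What it gives up is the explicit description of the nilradical-free structure: the paper's route yields, as a byproduct, the full list of minimal primes of $B$, which the paper uses elsewhere (for instance to discuss compactness of $\operatorname{Min} B$). Both proofs rely essentially on the direct-limit presentation of $B$ and on the fact that $R$ is a domain in which the $g_m$ are nonzero; yours additionally uses $\operatorname{char}\kk = 0$ to get separability, which is also implicitly at work in the paper (otherwise the minimal primes would not look as claimed).
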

\begin{proof}
Let $I \subseteq [1,r]$ consist of those $i \in [1,r]$ such that $n_i$ is even. If $I = \emptyset$ then $B$ is a domain so we may assume that $I \neq \emptyset$. We show that the intersection of all the minimal prime ideals of $B$ is $(0)$. We first set the following notation: let $\ulJ=(J_1,\ldots,J_r)\in\Gamma$ and set $B(\ulJ)$ to be the $\kk$-subalgebra of $B$ generated by $\{z\}\cup\{b_{i,j}\mid i\in[1,r], j\in J_i\}$ if $R=\kk[z]$ or by $\{z,z^{-1}\}\cup\{b_{i,j}\mid i \in [1,r], j\in J_i\}$ if $R=\kk[z,z^{-1}]$. Hence
\[
B(\ulJ)\cong\frac{R[b_{i,j}\mid i \in [1,r], j\in J_i]}{\left(b_{i,j}^2-\sigma^{-j}((z-\alpha_i)^{n_i}) \st i \in[1,r], j\in J_i\right)}.
\]
Let $a$ be an element in the intersection of all minimal prime ideals of $B$. We can write $a$ as a sum of finitely many $\Gamma$-homogeneous terms, so $a$ is in an element of $B(\ulJ)$ for some $\ulJ\in\Gamma$. Fix $i \in I$ and suppose $j\in J_i$. From now on we will work in $B(\ulJ)$. Since
\[
a\in\begin{pmatrix} b_{i,j}+\sigma^{-j}((z-\alpha_i)^{\frac{n_i}{2}})& & \\
b_{i,j'}-\sigma^{-j'}((z-\alpha_i)^{\frac{n_i}{2}})&\mid & j'\in J_i\backslash\{j\}\\
b_{i',j'}-\sigma^{-j'}((z-\alpha_{i'})^{\frac{n_{i'}}{2}})&\mid&i'\in I\backslash\{i\}, j'\in J_{i'}
\end{pmatrix}
\]
and
\[
a\in\begin{pmatrix} b_{i,j}-\sigma^{-j}((z-\alpha_i)^{\frac{n_i}{2}})& & \\
b_{i,j'}-\sigma^{-j'}((z-\alpha_i)^{\frac{n_i}{2}})&\mid & j'\in J_i\backslash\{j\}\\
b_{i',j'}-\sigma^{-j'}((z-\alpha_{i'})^{\frac{n_{i'}}{2}})&\mid&i'\in I\backslash\{i\}, j'\in J_{i'}
\end{pmatrix}
\]
we can write
\begin{equation}\label{4.7Rob}
a=\left(b_{i,j}+\sigma^{-j}\left((z-\alpha_i)^{\frac{n_i}{2}}\right)\right)r+s=\left(b_{i,j}-\sigma^{-j}\left((z-\alpha_i)^{\frac{n_i}{2}}\right)\right)r'+s'
\end{equation}
with $r,r'\in B(\ulJ)$ and 
\[
s,s'\in\begin{pmatrix}b_{i,j'}-\sigma^{-j'}((z-\alpha_i)^{\frac{n_i}{2}})&\mid & j'\in J_i\backslash\{j\}\\
b_{i',j'}-\sigma^{-j'}((z-\alpha_{i'})^{\frac{n_{i'}}{2}})&\mid&i'\in I\backslash\{i\}, j'\in J_{i'}
\end{pmatrix}.
\]
Setting $b_{k,\ell}=\sigma^{-\ell}\left((z-\alpha_{k})^{\frac{n_{k}}{2}}\right)$ for all $k \in I$ and $\ell \in J_{k}$ in \eqref{4.7Rob}, the right hand side becomes identically 0, and since we are now working in a quotient of $B(\ulJ)$ isomorphic to the domain $R$ we can deduce that
\[
r\in\left(b_{k, \ell}-\sigma^{-\ell}\left((z-\alpha_{k})^{\frac{n_{k}}{2}}\right)\st k \in I, \ell \in J_k\right).
\]
So
\[
a\in\begin{pmatrix} b_{i,j}^2-\sigma^{-j}((z-\alpha_i)^{n_i})& & \\
b_{i,j'}-\sigma^{-j'}((z-\alpha_i)^{\frac{n_i}{2}})&\mid & j'\in J_i\backslash\{j\}\\
b_{i',j'}-\sigma^{-j'}((z-\alpha_{i'})^{\frac{n_{i'}}{2}})&\mid&i'\in I\backslash\{i\}, j'\in J_{i'}
\end{pmatrix}.
\]
But $b_{i,j}^2-\sigma^{-j}((z-\alpha_i)^{n_i})=0$ in $B$, hence
\[
a\in\begin{pmatrix} 
b_{i,j'}-\sigma^{-j'}((z-\alpha_i)^{\frac{n_i}{2}})&\mid & j'\in J_i\backslash\{j\}\\
b_{i',j'}-\sigma^{-j'}((z-\alpha_{i'})^{\frac{n_{i'}}{2}})&\mid&i'\in I\backslash\{i\}, j'\in J_{i'}
\end{pmatrix}.
\]
Inducting on the size of $J_i$ and on the size of $I$, we conclude that $a=0$. As a result the intersection of the minimal primes of $B$ is $(0)$.
\end{proof}

\begin{corollary}
The minimal spectrum of $B$, $\mathrm{Min}\;B$, is compact in $\mathrm{Spec}\;B$ with respect to the Zariski topology.
\end{corollary}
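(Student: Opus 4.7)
The plan is to identify $\mathrm{Min}\;B$ with a topological space that is visibly compact by Tychonoff's theorem, and then transfer compactness across a continuous surjection. Let $I \subseteq [1,r]$ be the set of indices $i$ for which $n_i$ is even. If $I = \emptyset$, then Proposition~\ref{CoherentDomain}(1) implies $B$ is a domain, so $\mathrm{Min}\;B = \{(0)\}$ is a single point and is trivially compact. Assume from now on that $I \neq \emptyset$. By Lemma~\ref{MinimalPrimesLemma}, the assignment $\varepsilon \mapsto \pp_\varepsilon$ is a bijection
\[
\Phi\colon T := \{0,1\}^{I \times \ZZ} \longrightarrow \mathrm{Min}\;B,
\]
where $\pp_\varepsilon$ is defined as in \eqref{MinimalPrimeIdeal}. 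I would equip $T$ with the product topology (each factor $\{0,1\}$ discrete); by Tychonoff's theorem, $T$ is compact.

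The key step is to show that $\Phi$ is continuous when $\mathrm{Min}\;B$ is given the subspace topology from $\mathrm{Spec}\;B$. Since the Zariski topology on $\mathrm{Spec}\;B$ is generated by the basic opens $D(b)$ for $b \in B$, it suffices to prove that $\Phi^{-1}\bigl(D(b) \cap \mathrm{Min}\;B\bigr) = \{\varepsilon \in T : b \notin \pp_\varepsilon\}$ is open in $T$ for each $b \in B$. By Theorem~\ref{Bpresentation}, any $b \in B$ is a polynomial over $R$ in finitely many of the generators $b_{i,j}$; let $\Sigma \subseteq [1,r] \times \ZZ$ be the finite set of indices that actually occur. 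Using the explicit description of $B/\pp_\varepsilon$ from the proof of Lemma~\ref{MinimalPrimesLemma}, the image of $b$ in $B/\pp_\varepsilon$ is obtained by substituting $b_{i,j} \mapsto (-1)^{\varepsilon(i,j)}\sigma^{-j}\bigl((z-\alpha_i)^{n_i/2}\bigr)$ for each $(i,j) \in \Sigma \cap (I \times \ZZ)$, while for $(i,j) \in \Sigma$ with $i \notin I$ the generator $b_{i,j}$ is left alone in the domain $B/\pp_\varepsilon$. Consequently, whether or not $b \in \pp_\varepsilon$ depends only on $\varepsilon|_{\Sigma \cap (I \times \ZZ)}$, which is a restriction to a finite subset of $I \times \ZZ$. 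Therefore $\Phi^{-1}\bigl(D(b) \cap \mathrm{Min}\;B\bigr)$ is a union of cylinder sets in $T$, hence open in the product topology.

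Having shown that $\Phi$ is a continuous surjection from the compact space $T$ onto $\mathrm{Min}\;B$, we conclude that $\mathrm{Min}\;B$ is compact. The only real content is the observation that every $b \in B$ involves only finitely many generators $b_{i,j}$, so the condition $b \in \pp_\varepsilon$ cuts out a cylinder set in $T$; the rest is formal. I do not foresee a significant obstacle beyond organizing this observation cleanly.
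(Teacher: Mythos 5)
Your argument is correct, but it takes a genuinely different route from the paper. The paper's proof is a one-line citation: it invokes Proposition~\ref{CoherentDomain} (coherence of $B$), Corollary~\ref{reduced} (reducedness of $B$), and a result of Matlis that a coherent reduced commutative ring has compact minimal spectrum. You instead give a direct topological argument: Lemma~\ref{MinimalPrimesLemma} parametrizes $\mathrm{Min}\;B$ by $\{0,1\}^{I \times \ZZ}$, and the observation that membership $b \in \pp_\varepsilon$ is a cylinder condition (since any $b \in B$ involves only finitely many of the $b_{i,j}$, by Theorem~\ref{Bpresentation}) makes the parametrization continuous from the Tychonoff-compact Cantor-type space onto $\mathrm{Min}\;B$. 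Note also that you only need surjectivity and continuity, not injectivity, so you could safely drop the word ``bijection.'' The paper's route is shorter given the external reference and fits naturally after the coherence and reducedness results it has just established; your route is self-contained and more informative, in that it exhibits $\mathrm{Min}\;B$ as a quotient of (in fact, one can check, homeomorphic to) the Cantor set when $I \neq \emptyset$, rather than merely asserting compactness. Both are valid; the paper's avoids re-proving a known general fact about coherent rings, while yours avoids the reliance on Matlis and makes the topology of $\mathrm{Min}\;B$ explicit.
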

\begin{proof}
This follows from Proposition~\ref{CoherentDomain}, Corollary~\ref{reduced}, and \cite[Proposition 1.1]{Matlis}.
\end{proof}

We refer the reader to \cite[Definition 5.1]{HummelMarley} for the definition of the Gorenstein property for coherent rings.

\begin{proposition}
The ring $B$ is a coherent Gorenstein ring.
\end{proposition}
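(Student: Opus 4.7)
Coherence was established in Proposition~\ref{CoherentDomain}(2), so it remains to verify the Gorenstein condition. The plan is to exploit the direct limit presentation $B = \varinjlim B_m$ from Remark~\ref{DirectLimitRemark}, reducing the verification to a property of noetherian Gorenstein rings at each finite stage.

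First I would show, by induction on $m$, that each $B_m$ is a noetherian Gorenstein ring of Krull dimension one. The base case $B_{-1} = R$ is immediate, since $R$ is a PID and hence regular of dimension one. For the inductive step, the polynomial extension $B_m[b]$ is Gorenstein of Krull dimension two, and the monic polynomial $b^2 - g_{m+1}$ is a non-zerodivisor in $B_m[b]$. Since quotienting a Gorenstein ring by a non-zerodivisor preserves the Gorenstein property and drops the Krull dimension by one, $B_{m+1} = B_m[b]/(b^2 - g_{m+1})$ is noetherian Gorenstein of Krull dimension one. Noting also that each transition map $B_m \hookrightarrow B_{m+1}$ is free of rank two on the basis $\{1, b_{x_{m+1}}\}$, we conclude that $B_m \to B$ is faithfully flat for every $m$.

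Next I would invoke the characterization from~\cite[Section 5]{HummelMarley}: the coherent ring $B$ is Gorenstein provided every finitely presented $B$-module has finite Gorenstein projective dimension uniformly bounded by the Krull dimension. Given a finitely presented $M \in \mod B$, the flatness of $B_m \to B$ combined with $B = \varinjlim B_m$ allows one to write $M \cong B \otimes_{B_m} M_m$ for some $m$ and some finitely presented $B_m$-module $M_m$. Since $B_m$ is noetherian Gorenstein of dimension one, $M_m$ has Gorenstein projective dimension at most one over $B_m$, and applying the exact functor $B \otimes_{B_m} (-)$ transports a length-one resolution of $M_m$ to a length-one resolution of $M$ over $B$.

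The principal obstacle is the final compatibility: verifying that $B \otimes_{B_m} G$ is Gorenstein projective over $B$ whenever $G$ is Gorenstein projective over $B_m$. This rests on the fact that the $\mathrm{Ext}$-vanishing conditions defining Gorenstein projectivity are preserved under the faithfully flat base change $B_m \to B$. An alternative route, should that argument prove awkward in the coherent setting, is to bound $\mathrm{FP\text{-}id}_B(B)$ directly by assembling the finite injective resolutions of the $B_m$ over themselves into a filtered colimit and invoking the standard fact that over a coherent ring, filtered colimits of FP-injective modules remain FP-injective.
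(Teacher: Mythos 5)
Your verification of coherence (by citing Proposition~\ref{CoherentDomain}(2)) and your inductive argument that each $B_m$ is noetherian Gorenstein of Krull dimension one are both correct, and the latter is a reasonable alternative to the paper's argument. In fact your induction proves a little more than you state: since $B_{-1} = R$ is regular and each step passes from $B_m$ to $B_m[b]/(b^2-g_{m+1})$ by modding out a nonzerodivisor, every $B_m$ is a \emph{complete intersection}, not merely Gorenstein. The paper establishes the same fact a different way, by counting: $B_m$ is a polynomial ring in $m+1$ (resp.\ $m+2$) variables over $\kk$ modulo $m$ (resp.\ $m+1$) relations, has Krull dimension one, and the complete intersection defect is nonnegative, hence zero. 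Both arguments land in the same place.

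The genuine gap is in the transfer from the $B_m$ to $B$. You pass to Gorenstein projective dimensions, descend a finitely presented $B$-module to some $B_m$, and try to push a length-one Gorenstein projective resolution forward along $B_m \to B$. You correctly identify the obstruction: it is not formal that $B \otimes_{B_m} (-)$ carries Gorenstein projective $B_m$-modules to Gorenstein projective $B$-modules, since total acyclicity requires $\Hom$-exactness against \emph{all} projective $B$-modules, which need not descend to $B_m$. Your alternative sketch via FP-injective dimension has a similar unaddressed base-change issue (why is $B\otimes_{B_m} I$ FP-injective over $B$ when $I$ is injective over $B_m$?), and you have not verified that the characterization you invoke matches \cite[Definition 5.1]{HummelMarley}. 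The clean way to close the gap is exactly what the paper does: cite \cite[Proposition 5.11]{HummelMarley}, which says that a coherent ring presented as a filtered colimit along flat maps of (localizations of) noetherian local complete intersections is Gorenstein. Your induction already shows the $B_m$ are complete intersections with flat (indeed free) transition maps, so that citation finishes the proof without any base-change gymnastics.
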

\begin{proof}
By \cite[Proposition 5.11]{HummelMarley}, it suffices to prove that the rings $B_m$ in Remark \ref{DirectLimitRemark} are complete intersections, i.e. if $\mathfrak{q}$ is a maximal ideal of $B_m$ then the localization $(B_m)_\mathfrak{q}$ is a local complete intersection. To prove this we show that the complete intersection defect of $(B_m)_\mathfrak{q}$ is zero, see \cite[Lemma 7.4.1]{IFR}. If $R=\kk[z]$ then $B_m$ is a polynomial ring in $m+1$ variables modulo $m$ relations, if $R=\kk[z,z^{-1}]$ then $B_m$ is isomorphic to a polynomial ring in $m+2$ variables modulo $m+1$ relations, in both cases the complete intersection defect is $\mathrm{dim}\;(B_m)_\mathfrak{q}-1$. As in the proof of Theorem~\ref{KrullDim} the rings $B_m$ have Krull dimension one, and therefore \hbox{$\mathrm{dim}\;(B_m)_\mathfrak{q}\leq 1$}. Since the complete intersection defect is always nonnegative, it follows that it must be zero.
\end{proof}

\bibliography{refs}
\bibliographystyle{amsalpha}

\end{document}